\DeclareMathOperator{\rank}{rank}
\DeclareMathOperator{\is}{O}
\DeclareMathOperator{\im}{\rm im}
\newcommand{\C}{\mathbb{C}}
\newcommand{\Z}{\mathbb{Z}}
\newcommand{\lra}{\longrightarrow}
\newcommand{\ra}{\rightarrow}
\newcommand{\PP}{\mathbb{P}}
\newcommand{\R}{\mathbb{R}}
\newcommand{\Ps}{\mathbb P^}
\newcommand{\bbP}{\mathbb{P}}
\newcommand{\bbQ}{\mathbb{Q}}
\newcommand{\bbC}{\mathbb{C}}
\newcommand{\bbZ}{\mathbb{Z}}
\def\Ker{{\text{Ker}}}
\def\Hom{\mathop{\mathrm{Hom}}\nolimits}
\def\rk{\mathop{\mathrm{rank}}\nolimits}
\newtheorem{thm}{Theorem}[section]
\newtheorem{lemma}[thm]{Lemma}
\newtheorem{pro}[thm]{Proposition}
\newtheorem{cor}[thm]{Corollary}
\theoremstyle{definition}
\newtheorem{rem}[thm]{Remark}
\newtheorem{rem-def}[thm]{Remark-Definition}
\newtheorem{ex}[thm]{Example}
\begin{document}

\title{K3 surfaces with non-symplectic automorphisms of prime order}

\author{Michela Artebani, Alessandra Sarti, Shingo Taki \\
\ \\
\tiny{(with an Appendix by Shigeyuki Kond{$\bar{\rm o}$})}}

\begin{abstract}
In this paper we present the classification of non-symplectic automorphisms of prime order on  $K3$ surfaces, i.e.\,\,we describe the topological structure of their fixed locus and determine their invariant lattice in cohomology. 
We provide new results for automorphisms of order $5$ and $7$ and alternative proofs for higher orders.
Moreover, for any prime $p$, we identify the irreducible components of the moduli space of $K3$ surfaces with a non-symplectic automorphism of order $p$.
\end{abstract}

\subjclass[2000]{Primary 14J28; Secondary 14J50, 14J10}
\keywords{$K3$ surface, non-symplectic automorphism, lattice}
\thanks{The first author has been partially 
supported by Proyecto FONDECYT Regular 2009, N. 1090069.}

\maketitle

\pagestyle{myheadings} 
\markboth{M. Artebani, A. Sarti, S. Taki}{Non-symplectic automorphisms on $K3$ surfaces}
\setcounter{tocdepth}{1}

\section{Introduction}
A \textit{$K3$ surface} is a compact surface $X$ over $\mathbb{C}$ with trivial 
 canonical bundle and $\dim H^{1}(X, \mathcal{O}_{X})=0$.
 In the following, we will denote by $S_{X}$, $T_{X}$ and $\omega _{X}$ 
the Picard lattice, the transcendental lattice and a nowhere vanishing 
holomorphic $2$-form on $X$, respectively.

An automorphism $\sigma$ of a $K3$ surface $X$ is called \textit{symplectic} if it acts trivially on $\mathbb{C}\omega _{X}$.
This paper deals with $K3$ surfaces carrying  a \textit{non-symplectic} automorphism of prime order $p$.  
In this case it is known that $\sigma$ acts without non zero fixed vectors on the transcendental lattice, so that $T_X$ acquires the structure of a module over $\Z[\zeta_p]$, where $\zeta_p=e^{2\pi i/p}$. Since $T_X$ is contained in the rank $22$ lattice $H^2(X,\Z)$, this implies that $p$ is at most $19$.
 
Non-symplectic automorphisms have been studied by several authors, e.g. in \cite{V, N3, N4, AS, Taki, OZ1, OZ2,  OZ3,  MO}. 
It is known that for $p=13,17,19$ there are only isolated  pairs $(X_p,\sigma_p)$, where  $\sigma_p$ is a non-symplectic automorphism of order $p$ acting on the $K3$ surface $X_p$. This was first announced (without proof) by Vorontsov in \cite{V} and then it was proved by Kond\=o in \cite{Kondo1} and Oguiso-Zhang in \cite{OZ2}.  

In this paper we give the classification for $p=5,7$, we survey the known results for $p=2,3$ and we provide different proofs and examples for $p=11, 13, 17, 19$. The key idea is to characterize the fixed locus of the automorphism in terms of the properties of its invariant lattice in $H^2(X,\Z)$.
The main result is the following.
\begin{thm}\label{introthm} Let $S$ be a hyperbolic $p$-elementary lattice ($p$ prime) of rank $r$ with $\det(S)=p^a$. 
Then $S$ is isometric to the invariant lattice of a non-symplectic automorphism $\sigma$ of order $p$ on a $K3$ surface if and only if 
$$(*)\qquad 22-r-(p-1)a\in 2(p-1)\mathbb{Z}_{\geq 0}.  $$

Moreover, if $\sigma$ is such automorphism, then its fixed locus  $X^{\sigma}$ is the disjoint union of smooth curves and isolated points and has the following form:
\begin{equation*}
 X^{\sigma}=
\begin{cases}
\emptyset &   \text{\rm if $S\cong U(2)\oplus E_{8}(2)$}, \\
E_{1}\cup E_{2} & \hspace{0cm} \text{\rm if $S\cong U\oplus E_{8}(2)$}, \\
C\cup R_{1}\cup\cdots\cup R_{k}\cup \{ p_{1}, \dots, p_{n} \} & \text{ otherwise},
\end{cases}
\end{equation*}
where  $E_i$ is a smooth elliptic curve,  $R_i$ is a smooth rational curve,  $p_{i}$ is an isolated fixed point and $C$ is a smooth curve of genus 
$$g=\dfrac{22-r-(p-1)a}{2(p-1)}.$$ 
Moreover:

\[\begin{array}{c|cccccc}
 & p=2 & p=3,5,7 & p=11 & p=13 & p=17 & p=19 \\[6pt]
 \hline \\[-7pt]
 
n & 0 & \dfrac{-2+(p-2)r}{p-1}  & \dfrac{2+9r}{10} & 9 & 7 & 5\\ [13pt]
k &  \dfrac{r-a}{2}& \dfrac{2+r-(p-1)a}{2(p-1)} &\dfrac{-2+r-10a}{20}  & 1 & 0 & 0\\[8pt]
\end{array}\]

with the convention that $X^{\sigma}$ contains no fixed curves if $k=-1$.
\end{thm}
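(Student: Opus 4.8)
I plan to prove the two halves of the theorem in parallel. If $\sigma$ is non-symplectic of prime order $p$ on a $K3$ surface $X$, then $S=H^2(X,\Z)^\sigma$ is hyperbolic (it contains $\frac{1}{p}\sum_i(\sigma^i)^*$ of an ample class) and $p$-elementary, since $p$ kills $H^2(X,\Z)/(S\oplus S^\perp)\cong A_S\cong A_T$ with $T:=S^\perp$; moreover $\sigma$ acts on $T$ with no nonzero fixed vector, so $T$ is a $\Z[\zeta_p]$-lattice of $\Z$-rank $22-r=(p-1)m$ (the characteristic polynomial of $\sigma|_T$ being $\Phi_p^m$), itself $p$-elementary of length $a$. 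Hence $22-r-(p-1)a=(p-1)(m-a)$, so the necessity of $(*)$ reduces to showing $m-a\in 2\Z_{\ge 0}$; I expect this to fall out of the fixed-locus analysis below, where the distinguished fixed curve has genus $g=(m-a)/2$.

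\textbf{Existence.} Conversely, let $S$ be a hyperbolic $p$-elementary lattice of rank $r$ and $\det=p^a$ satisfying $(*)$. By Nikulin's existence and uniqueness results $S$ embeds primitively in the $K3$ lattice $\Lambda=U^{\oplus 3}\oplus E_8^{\oplus 2}$ with orthogonal complement $T$ of signature $(2,20-r)$ and $A_T\cong(\Z/p)^a$, and — this is the arithmetic input — $(*)$ is exactly the condition for $T$ to carry a fixed-point-free isometry $\rho$ of order $p$ acting trivially on $A_T$, equivalently a $\Z[\zeta_p]$-lattice structure whose Hermitian signature is concentrated (as it must be) at a single complex place. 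Extending $\rho$ by $\mathrm{id}_S$ gives $\tilde\rho\in O(\Lambda)$; choose a generic eigenvector $\omega\in T\otimes\C$ of $\tilde\rho$ for a primitive $p$-th root of unity, with $\omega^2=0$, $\omega\cdot\bar\omega>0$ and $\omega^\perp\cap\Lambda=S$. By surjectivity of the period map there is a $K3$ surface $X$ with period $\omega$; since $\tilde\rho$ is a Hodge isometry of $H^2(X,\Z)$ acting as the identity on $S_X\otimes\R=S\otimes\R$, which contains the K\"ahler cone, the global Torelli theorem yields an automorphism $\sigma$ of $X$ inducing $\tilde\rho$, and $\sigma$ has order $p$, is non-symplectic, and has invariant lattice exactly $S$.

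\textbf{The fixed locus.} Now take $(X,\sigma)$ as above, normalised so that $\sigma^*\omega_X=\zeta\omega_X$ with $\zeta=\zeta_p$. The fixed locus is smooth; along a fixed curve $Z$ the automorphism acts on the normal bundle by $\zeta$, so $K_X=0$ and adjunction give $Z^2=2g(Z)-2$, while near an isolated fixed point $\sigma$ acts on the tangent plane by $\operatorname{diag}(\zeta^{a_i},\zeta^{b_i})$ with $a_i,b_i\not\equiv 0$ and $a_i+b_i\equiv 1\pmod{p}$, leaving only finitely many local types (none for $p=2$, whence $n=0$; one for $p=3$; two for $p=5$; and so on). I would then extract three families of linear relations among $g=g(C)$, the number $k$ of rational fixed curves, and the numbers of isolated points of each type. (i) The topological Lefschetz formula gives $\chi_{\mathrm{top}}(X^\sigma)=2+\operatorname{tr}(\sigma^*\mid H^2(X,\Q))=2+r-m$, i.e.\ writing $X^\sigma=C\sqcup R_1\sqcup\cdots\sqcup R_k\sqcup\{p_1,\dots,p_n\}$, the identity $2k+n-2g=r-m$. (ii) The holomorphic Lefschetz formula, applied to $\sigma,\sigma^2,\dots,\sigma^{p-1}$: for each $j$, a fixed curve $Z$ contributes $\dfrac{(1-g(Z))(1+\zeta^{-j})}{(1-\zeta^{-j})^2}$ and an isolated point of type $(a_i,b_i)$ contributes $\dfrac{1}{(1-\zeta^{-ja_i})(1-\zeta^{-jb_i})}$, with total $1+\zeta^{-j}$. (iii) The invariants of the minimal resolution $\widetilde Y$ of $Y=X/\sigma$: the quotient map $\pi$ is ramified with index $p$ exactly along the fixed curves, so $Y$ is smooth outside the $n$ Hirzebruch--Jung cyclic quotient singularities produced by the isolated fixed points, $\pi^*K_Y=-(p-1)(C+\sum_i R_i)$, and $\chi(\mathcal O_{\widetilde Y})=1$ (as $H^1(\mathcal O_X)=0$ and $\sigma$ acts on $H^2(\mathcal O_X)$ by $\bar\zeta\ne 1$); resolving the singularities explicitly and combining with Noether's formula and the Enriques--Kodaira classification forces $\widetilde Y$ to be rational except when $X^\sigma=\emptyset$, where it is Enriques. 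Solving the resulting system gives $g=\dfrac{22-r-(p-1)a}{2(p-1)}$ and the tabulated values of $n$ and $k$. The exceptional cases $S\cong U(2)\oplus E_8(2)$ and $S\cong U\oplus E_8(2)$ (which occur only for $p=2$ and share the generic value of $\chi(X^\sigma)$, hence are invisible to the topological formula) are settled by hand, using that the quotients are an Enriques surface, resp.\ a rational elliptic surface.

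\textbf{The main obstacle.} I expect the arithmetic in the existence step to be the technical heart: showing that $(*)$ is \emph{exactly} the obstruction to the existence of a fixed-point-free order-$p$ isometry of a $p$-elementary lattice of signature $(2,20-r)$ and length $a$ acting trivially on the discriminant requires careful control of Hermitian $\Z[\zeta_p]$-lattices near the ramified prime $(1-\zeta_p)$ above $p$ — which is where the parity $m\equiv a$ is born — together with the global signature constraint. The other delicate point is that for $p\ge 5$ the number of local types at isolated fixed points grows, so the Lefschetz relations no longer pin the fixed locus down on their own and one genuinely needs the quotient-surface computation (the Hirzebruch--Jung strings and their discrepancies, via Noether's formula and the classification) to close the system — after which one must still verify that the tabulated $n$ and $k$ are \emph{forced}, not merely consistent with all the relations.
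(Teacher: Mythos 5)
There is a genuine gap at the heart of your plan: the step that connects the discriminant length $a$ to the fixed locus. Your closing system consists of (i) the topological Lefschetz formula, (ii) the holomorphic Lefschetz formula for the powers of $\sigma$, and (iii) the invariants of the resolved quotient (Hirzebruch--Jung strings, $\pi^*K_Y$, $\chi(\mathcal O_{\widetilde Y})=1$, Noether's formula, rationality). Every one of these relations involves only $p$, $r$ (equivalently $m$, through traces and Euler characteristics) and the fixed-locus data $(g,k,n_t)$; the invariant $a$ never appears in any of them, so they cannot possibly output $g=\frac{22-r-(p-1)a}{2(p-1)}$, the $k$-column of the table, or the necessity of $(*)$ --- which you explicitly defer to this very analysis (``I expect this to fall out of the fixed-locus analysis''). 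Concretely, for $p=5$ and $r=6$ both $S\cong H_5\oplus A_4$ ($a=2$, fixed locus an elliptic curve plus $4$ points) and $S\cong H_5\oplus A_4^*(5)$ ($a=4$, fixed locus $4$ points and no curve) have the same $r$, the same $\alpha=1-g+k$, the same $n_t$, and quotients satisfying Noether's formula; your system is consistent with both and cannot tell them apart. The paper's substitute for your step (iii) is Smith theory with $\Z_p$-coefficients: $a$ enters as $\dim H^2(X,\Z_p)-\dim\ker\mathfrak g_*$, and the Smith exact sequences yield $\sum_i\dim H_i(X^{\sigma},\Z_p)=24-2a-m(p-2)$ (Proposition \ref{smith}), hence $2g=m-a$ (Corollary \ref{genus}); this single identity is what produces the genus formula, the $k$-formulas and the parity/positivity in $(*)$. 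Some argument of this kind --- or an equivalent lattice-theoretic computation recovering $S(\sigma)$, including its discriminant, from $\operatorname{Pic}$ of the quotient and the exceptional curves --- is indispensable and is missing from your proposal.

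A second, smaller gap is the existence direction. You rest it on the assertion that $(*)$ is \emph{exactly} the condition for the $p$-elementary lattice $T$ of signature $(2,20-r)$ to carry a fixed-point-free order-$p$ isometry acting trivially on $A_T$, which you yourself flag as the technical heart but do not prove. The paper does not prove such a general arithmetic statement: it uses Theorem \ref{RS} to list the finitely many admissible pairs $(r,a)$ for each $p$ and then realizes every case by explicit geometric families (double covers of $\Ps 2$ branched in invariant sextics, invariant elliptic fibrations, canonical covers of log Enriques surfaces), with an explicit isometry written down where needed (Appendix); the period map and global Torelli enter only afterwards, in the moduli discussion, starting from an isometry already known to exist. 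Your Torelli step itself is fine in outline, but as written the existence half of the theorem is reduced to an unproven Hermitian-lattice classification rather than established.
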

As a consequence of this result, we determine the maximal components of the moduli space of $K3$ surfaces with a non-symplectic automorphism of order $p$ for any prime $p$. 
Moreover, we show that for $p>2$ the topological structure of the fixed locus of $\sigma$ determines uniquely the action of $\sigma$ on $H^2(X,\Z)$ (see Remark \ref{obs}).\\

The plan of the paper is the following. 
Section 1 introduces some background material on lattices. In Section 2 we prove the main properties of the invariant lattice $S(\sigma)$ and of the fixed locus of a non-symplectic automorphism $\sigma$ of prime order.
Theorem \ref{point} gives the number of isolated fixed points and the local action at them, as a function of the rank of $S(\sigma)$.
Moreover, by means of Smith exact sequences, we determine the maximal genus $g$ of a fixed curve as a function of the lattice invariants of $S(\sigma)$, see Corollary \ref{genus}.
These results show that the topology of the fixed locus of the automorphism is uniquely determined by the isometry class of its invariant lattice and allow to give a complete classification.
The following sections give more explicit results for each prime order $p$.

Sections 3 and 4 briefly review the known results for $p=2$ and $3$ respectively.  All possible configurations of the fixed locus related to the invariants of $S(\sigma)$ are represented in Figures \ref{ord2} and \ref{ord3}.

In section 5 we classify non-symplectic automorphisms of order $5$. The classification theorem is resumed in Table \ref{ord5}.
The topological structure of the fixed locus gives a natural stratification in $7$ families. Two of them, of dimensions $3$ and $4$, are the maximal irreducible components of the moduli space (as we will show in section 10). We provide projective models for the generic member of each family.

In section 6 we give a similar classification and description for $p=7$, see Table \ref{ord7}. In this case there are two maximal components of dimension $2$.

In sections 7 and 8 we provide an alternative view of the classification for $p=11, 13, 17, 19$.
In case $p=11$ it is known that the moduli space has two maximal 1-dimensional components, while the pair $(X,\sigma)$ is unique for $p>11$.

In section 9 we deal with moduli spaces. First, we recall the structure of the moduli space of pairs $(X,\sigma)$ where $X$ is a $K3$ surface and $\sigma$ is a non-symplectic automorphism of order $p$ with a given representation in $H^2(X,\Z)$. This moduli space is known to be isomorphic to a quotient of an open dense subset of either an Hermitian symmetric domain of type $IV$ (for $p=2$) or of a complex ball (for $p>2$) for the action of a discrete group.
For any prime $p$ we identify the irreducible components of the moduli space of $K3$ surfaces with a non-symplectic automorphism of order $p$, see Theorem \ref{moduli1}.

In the appendix S. Kond\=o shows that the moduli space of pairs $(X,\sigma)$, where $\sigma$ is non-symplectic of order $7$ having only isolated fixed points, is a ball quotient isomorphic to the Naruki $K3$ surface. A similar example for $p=5$ was given by the same author in \cite{Kondo2}.\\

{\it Acknowledgements.} We would like to thank Igor Dolgachev, Alice Garbagnati and the referee for several helpful comments.

\section{Lattices}
A lattice is a finitely generated free abelian group equipped with a non-degenerate symmetric bilinear form with integer values. If the signature of the lattice is $(1, r-1)$ then it is called hyperbolic.  We will work with even lattices i.e. such that the quadratic form on it takes values in $2\Z$.

The quadratic form on $L$ determines a canonical embedding $L\subset L^{\ast }=\Hom (L,\mathbb{Z})$. We denote by $A_{L}$ the factor group $L^{\ast }/L$, which is a finite abelian group.
If this group is trivial, then $L$ is called \textit{unimodular}.

Let $p$ be a prime number. A lattice $L$ is called \textit{$p$-elementary} if $A_{L}\simeq \mathbb{Z}_p^{a}$. 
If $L$ is a $p$-elementary lattice primitively embedded in a unimodular lattice $M$ and $L^{\perp}$ is its orthogonal complement in $M$, then it is known that $L^{\perp}$ is also $p$-elementary and  $p^a=|\det(L)|=|\det(L^{\perp})|$. 

The following result classifies even, indefinite, $p$-elementary lattices (see \cite{RS}).
\begin{thm}\label{RS}
An even, indefinite, $p$-elementary lattice of rank $r$ for $p \neq 2$ and $r\geq 2$ is 
uniquely determined by the integer $a$.

For $p\neq 2$ a hyperbolic $p$-elementary lattice with invariants $a,r$  exists if and only if
the following conditions are satisfied:
$a\leq r$, $r\equiv 0 \pmod 2$ and 

\begin{equation*}
\begin{cases}
\text{for } a\equiv 0 \pmod 2, & r\equiv 2 \pmod 4 \\
\text{for } a\equiv 1 \pmod 2, & p\equiv (-1)^{r/2-1} \pmod 4 .
\end{cases}
\end{equation*}
Moreover $r>a>0$, if $r\not \equiv 2 \pmod 8$.

An even,  indefinite, $2$-elementary lattice is determined by $r, a$ and a third invariant $\delta\in \{0,1\}$, see \cite{N3}.
 \end{thm}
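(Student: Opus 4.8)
The plan is to derive both halves of the statement from Nikulin's theory of discriminant forms, reducing the classification of $p$-elementary lattices to the local theory of quadratic forms over $\mathbb{Z}_\ell$. To an even lattice $L$ one attaches the finite group $A_L=L^{\ast}/L$ together with the discriminant quadratic form $q_L\colon A_L\to\mathbb{Q}/2\mathbb{Z}$; when $L$ is $p$-elementary one has $A_L\cong\mathbb{Z}_p^{a}$. I would rely on two standard facts from \cite{N3}: the genus of an even lattice is determined by its signature and its discriminant form; and an indefinite even lattice is unique in its genus, hence determined up to isometry by $(\,\sign, q_L)$, whenever $\rk L\geq \ell(A_L)+2$. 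With these in hand the theorem becomes a bookkeeping problem about $p$-elementary $\mathbb{Z}_p$-forms and their compatibility with the real signature.

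First I would pin down the local structure. Since $|\det L|=p^{a}$ is prime to every $\ell\neq p$, the completion $L\otimes\mathbb{Z}_\ell$ is unimodular; for odd $\ell$ it is then determined by its rank, while $L\otimes\mathbb{Z}_2$ is an even unimodular $\mathbb{Z}_2$-lattice, hence an orthogonal sum of the two rank-two even unimodular $2$-adic forms. This forces $\rk L$ to be even, which is the condition $r\equiv 0\pmod 2$. At $p$ I would take a Jordan splitting $L\otimes\mathbb{Z}_p\cong L_0\perp pL_1$ with $L_0,L_1$ unimodular of ranks $r-a$ and $a$; for odd $p$ a $\mathbb{Z}_p$-unimodular lattice is classified by its rank and its determinant class in $\mathbb{Z}_p^{\ast}/(\mathbb{Z}_p^{\ast})^2$. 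Hence $q_L$ depends only on $a$ together with one square class, the determinant of the non-unimodular block $L_1$.

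The core of the argument is to show that the signature forces this remaining square class, which gives uniqueness and simultaneously produces the congruences. I would equate Milgram's signature formula $\sign(L)\equiv\sign(q_L)\pmod 8$, the right side computed by quadratic Gauss sums (so depending on $a$, on $p\bmod 4$, and on the determinant class), with the Hasse--Minkowski product formula linking the local determinant classes to $\det L$ and the signature. In the hyperbolic case $\sign(L)=2-r$. When $a$ is even the Gauss sum is real, so $\sign(q_L)\in\{0,4\}$ and matching to $2-r$ yields $r\equiv 2\pmod 4$; when $a$ is odd the unpaired rank-one block contributes an imaginary factor and matching forces $p\equiv(-1)^{r/2-1}\pmod 4$. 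In either case the determinant class, and therefore the lattice, is uniquely determined. Existence is the converse direction: given $r,a$ obeying the congruences I would assemble the prescribed $\mathbb{Z}_\ell$-forms, verify the product formula is solvable, and apply Nikulin's existence criterion \cite{N3} to realize a global even lattice of signature $(1,r-1)$ with the required discriminant form.

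The step I expect to be hardest is the boundary analysis behind the clause ``$r>a>0$ if $r\not\equiv 2\pmod 8$''. For $a=0$ the lattice is even unimodular and hyperbolic, which exists only when $\sign(L)=2-r\equiv 0\pmod 8$, i.e.\ $r\equiv 2\pmod 8$; the case $a=r$ is the dual one, imposing the same restriction on the complementary unimodular block. These extremes, and more generally the range $a\geq r-1$, fall outside the hypothesis $\rk L\geq\ell(A_L)+2$ of Nikulin's uniqueness theorem (in particular for the rank-two lattices with $a>0$), so there uniqueness has to be extracted from the special rigidity of $p$-elementary genera rather than the general criterion --- this is exactly where the Rudakov--Shafarevich argument \cite{RS} does its real work. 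Finally, the case $p=2$ genuinely differs, because $2$-adic unimodular forms are not classified by rank and determinant alone; the extra invariant $\delta\in\{0,1\}$, recording whether $q_L$ takes only integer values, must be tracked throughout, and for this I would defer to Nikulin's classification in \cite{N3}.
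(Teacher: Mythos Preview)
The paper does not prove this theorem at all: it is stated as background and attributed to Rudakov--Shafarevich \cite{RS} (and, for $p=2$, to Nikulin \cite{N3}). So there is no ``paper's own proof'' to compare against; the authors simply quote the result and use it.

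That said, your outline is a reasonable modern approach to results of this type, and is close in spirit to how Nikulin's machinery is typically deployed. A few comments. Your reduction to the signature and the discriminant form is correct, and the observation that for odd $p$ the quadratic form on $(\mathbb{Z}/p)^a$ is determined by $a$ together with a single Legendre symbol is the right local input. Pinning that symbol down via the Milgram formula $\operatorname{sign}(L)\equiv\operatorname{sign}(q_L)\pmod 8$ is also the standard move, and it does produce the stated congruences on $r$ and $p$. You have correctly flagged the genuine subtlety: Nikulin's uniqueness-in-genus criterion needs $r\geq a+2$, so the cases $a\in\{r-1,r\}$ (and the rank-two hyperbolic lattices) are not covered by the general theorem and require a separate argument. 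This is exactly the sort of thing the Rudakov--Shafarevich paper handles directly; if you were to fill this in yourself you would likely appeal to Eichler's theorem on spinor genera for indefinite lattices, or to an explicit enumeration in low rank. Your treatment of the $p=2$ case, deferring to \cite{N3} for the extra invariant $\delta$, is appropriate.

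In short: there is no discrepancy with the paper because the paper gives no proof; your sketch is a plausible route to an independent proof, with the boundary cases $a\geq r-1$ being, as you note, the place where real work remains.
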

 
\subsection*{Notation}
 We will denote by $U$ the unique even unimodular hyperbolic lattice of rank two and by $A_{m}$, $D_{n}$, $E_{l}$ the even, 
negative definite lattices associated with the Dynkin diagrams of the corresponding types ($m\geq 1$,  $n\geq 4$, $l=6,7,8$).
Moreover, $L(a)$ and $L^b$ will denote the lattices whose bilinear form is respectively the one on $L$ multiplied by $a$ and the orthogonal sum of $b$ copies of the one on $L$. 
 \subsection*{Examples}
 \begin{enumerate}[$-$]
 \item The lattices $U$ and $E_8$ are unimodular. Any even unimodular lattice of signature $(3,19)$ is isometric to $L_{K3}=U^{\oplus 3}\oplus E_{8}^{\oplus 2}$ (\cite{Milnor,  Serre}).
\item If $p$ is prime, then the lattice  $A_{p-1}$ is $p$-elementary with $a=1$.   
\item The lattice $E_7$ is $2$-elementary with $a=1$.
 \item
If $p\equiv 3\, (\!\!\!\mod\, 4)$, then the lattice
$$K_{p}=\left(\begin{array}{cc}
-(p+1)/2 & 1\\
1& -2
\end{array}
\right)
$$
is negative definite, $p$-elementary, with $a=1$. Note that $K_{3}\cong A_2$.
\item If $p\equiv 1\, (\!\!\!\mod\, 4)$ then the lattice
$$H_{p}=\left(\begin{array}{cc}
(p-1)/2 &1\\
1& -2
\end{array}
\right)
$$
is hyperbolic, $p$-elementary, with $a=1$.  
\item The lattice  
$$
A_4^*(5)=\left(\begin{array}{rrrr}
-4 &1&1&1\\
1& -4&1&1\\
1&1&-4&1\\
1&1&1&-4
\end{array}
\right)
$$ 
is negative definite,  $5$-elementary with $a=3$.
\item The lattice
$$
L_{17}=\left(\begin{array}{rrrr}
-2&1&0&1\\
1&-2&0&0\\
0&0&-2&1\\
1&0&1&-4
\end{array}
\right)
$$ 
is negative definite,  $17$-elementary with $a=1$.
 \end{enumerate}

\section{Non-symplectic automorphisms on $K3$'s}
Let $X$ be a $K3$ surface i.e. a simply connected smooth compact complex surface with a nowhere vanishing holomorphic 2-form $\omega_X$. 
The cohomology group $H^2(X,\Z)$, equipped with the cup product, is known to be an unimodular lattice isometric to $L_{K3}$.
The Picard lattice $S_X$ and the transcendental lattice $T_X$ are the following primitive sublattices of $H^2(X,\Z)$:
$$S_X=\{x\in H^2(X,\Z): (x,\omega_X)=0\},\hspace{1cm}  T_X=S^{\perp}_X .$$ 
 
An automorphism $\sigma$ of $X$ is called \emph{non-symplectic} if its action on the vector space $H^{2,0}(X)=\C\omega_X$ is not trivial. Observe that, by \cite[Theorem 3.1]{N1}, $K3$ surfaces with a non-symplectic automorphism of finite order are always algebraic. 
In this paper we are interested in non-symplectic automorphisms of prime order i.e.
$$ \sigma^p=id \hspace{0.5cm}\mbox{ and }\hspace{0.5cm} \sigma^*(\omega_X)=\zeta^k_p \omega_X,\ 0<k<p,$$
where $\zeta_p$ is a primitive $p$-th root of unity.

The automorphism $\sigma$ induces an isometry $\sigma^*$ on $H^2(X,\Z)$ which preserves  both $S_X$ and $T_X$.
We will consider the invariant lattice and its orthogonal in $H^2(X,\Z)$:
$$S(\sigma)=\{x\in H^2(X,\Z): \sigma^*(x)=x\},\hspace{1cm}  T(\sigma)=S(\sigma)^{\perp}.$$

\begin{thm}\label{ns}
Let $X$ be a $K3$ surface and $\sigma$ be a non-symplectic automorphism of $X$ of prime order $p$.
Then
\begin{enumerate}[a)]
\item $S(\sigma)\subset S_X$ and $T_X\subset T(\sigma)$;
\item $T(\sigma)$ and $T_X$ are free modules over $\Z[\zeta_p]$ via the action of $\sigma^*$;
\item $S(\sigma)$ and $T(\sigma)$ are $p$-elementary lattices and $A_{S(\sigma)}\cong A_{T(\sigma)}\cong \mathbb{Z}_p^{a}$ with $$a\leq \frac{ \rk (T(\sigma))}{p-1}.$$ 
\end{enumerate}
\end{thm}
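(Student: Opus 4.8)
The plan is to exploit the module structure over $\Z[\zeta_p]$ together with the unimodularity of $H^2(X,\Z)$. For part (a), I would argue directly: any $x\in S(\sigma)$ satisfies $\sigma^*(x)=x$, hence $(x,\omega_X)=(\sigma^*x,\sigma^*\omega_X)=\zeta^k_p(x,\omega_X)$, which forces $(x,\omega_X)=0$ since $\zeta^k_p\neq 1$; thus $x\in S_X$ by definition of $S_X$. Taking orthogonal complements in $H^2(X,\Z)$ and using that both $S(\sigma)$ and $S_X$ are primitive sublattices gives $T_X=S_X^{\perp}\subset S(\sigma)^{\perp}=T(\sigma)$.

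For part (b), the key input is that $\sigma$ is non-symplectic of \emph{prime} order $p$: on $T(\sigma)\otimes\Q$ the automorphism $\sigma^*$ has no nonzero fixed vectors (by construction of $T(\sigma)$ as the orthogonal of the invariant part), so its minimal polynomial divides $x^p-1$ but has no factor $x-1$, hence must be exactly the cyclotomic polynomial $\Phi_p(x)=1+x+\cdots+x^{p-1}$ (since $\Phi_p$ is irreducible over $\Q$ and $\sigma^*$ is not the identity, as $p$ is prime). This makes $T(\sigma)\otimes\Q$ a vector space over $\Q(\zeta_p)$, and $T(\sigma)$ itself a module over the ring $\Z[\zeta_p]=\Z[x]/\Phi_p(x)$. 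Since $\Z[\zeta_p]$ is the ring of integers of $\Q(\zeta_p)$ and the prime $p$ is totally ramified, $\Z[\zeta_p]$ is a Dedekind domain; a finitely generated torsion-free module over a Dedekind domain is projective, and I would invoke the fact (or reprove it via the explicit structure) that it is in fact free — here one can use that $H^2(X,\Z)$ is unimodular to pin down the module, or cite the standard argument as in \cite{V, N4} that the relevant $\Z[\zeta_p]$-module embedded in a unimodular lattice is free. The same reasoning applies to $T_X$, which is a $\sigma^*$-stable sublattice of $T(\sigma)$ with the same property of having no fixed vectors.

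For part (c), I would combine (b) with the unimodularity of $H^2(X,\Z)$. Since $T(\sigma)=S(\sigma)^{\perp}$ in the unimodular lattice $L_{K3}$, the discriminant groups satisfy $A_{S(\sigma)}\cong A_{T(\sigma)}$ and $|\det S(\sigma)|=|\det T(\sigma)|$. To see these are $p$-elementary: writing $m=\rk T(\sigma)/(p-1)$ for the rank of $T(\sigma)$ as a free $\Z[\zeta_p]$-module, the discriminant of $T(\sigma)$ is computed from the Hermitian form over $\Z[\zeta_p]$, and since the different of $\Q(\zeta_p)/\Q$ is $(\zeta_p-1)^{p-2}$ with $(\zeta_p-1)$ lying over $(p)$, the discriminant group is killed by $p$ and generated by at most $m$ elements — this gives $A_{T(\sigma)}\cong\Z_p^a$ with $a\le m=\rk T(\sigma)/(p-1)$. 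Then $A_{S(\sigma)}\cong A_{T(\sigma)}\cong\Z_p^a$ as well, and $S(\sigma)$ is $p$-elementary by definition. The main obstacle is the freeness claim in (b): over a general Dedekind domain one only gets projectivity, so one must use either the specific structure of $\Z[\zeta_p]$ (class number considerations, or the explicit presence of $T(\sigma)$ inside the unimodular $L_{K3}$ which constrains the Steinitz class) or a direct lattice-theoretic argument; once freeness is in hand, the discriminant computation in (c) is essentially a local computation at the prime above $p$.
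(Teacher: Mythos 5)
Parts a) and b) of your outline are essentially fine: a) is the standard invariance argument, and for b) both you and the paper ultimately defer to the literature (the paper cites \cite[Section 3]{N1} and \cite[Lemma 1.1]{MO} for a), b) and for $p$-elementarity). The freeness issue you flag is real but is settled most cleanly by noting that only $p\le 19$ occurs and $\Q(\zeta_p)$ has class number one in that range, so torsion-free $=$ projective $=$ free; your vaguer suggestion that unimodularity of $H^2(X,\Z)$ "constrains the Steinitz class" is not something you substantiate.

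The genuine gap is in c), which is exactly the part the paper proves in detail, namely the bound $a\le \rk T(\sigma)/(p-1)=m$. Your deduction uses only that $T(\sigma)$ is a free Hermitian $\Z[\zeta_p]$-lattice together with the different $(1-\zeta_p)^{p-2}$, and that is not enough: "killed by $p$" plus "generated by $m$ elements over $\Z[\zeta_p]$" only bounds $a$ by $m(p-1)$, because $\Z[\zeta_p]/(p)\cong \mathbb{F}_p[t]/(t^{p-1})$ is not a field. Discriminant groups of the form $\Z[\zeta_p]/(1-\zeta_p)^k$ with $1<k\le p-1$ are $\Z[\zeta_p]$-cyclic and killed by $p$ but have $\mathbb{F}_p$-rank $k>1$, and rank-one Hermitian $\Z[\zeta_p]$-lattices (take $\det h$ divisible by a higher power of $(1-\zeta_p)$) realize them, so nothing in your argument excludes $a>m$. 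The missing ingredient, which is the heart of the paper's proof, is that $\sigma^*$ acts trivially on the discriminant group: since $\sigma^*=\id$ on $S(\sigma)$ and the unimodular gluing gives a $\sigma^*$-equivariant isomorphism $A_{S(\sigma)}\cong A_{T(\sigma)}$, the group $A_{T(\sigma)}$ is killed by $1-\zeta_p$, hence is an $\mathbb{F}_p$-vector space; being a quotient of the rank-$m$ module $T(\sigma)^*$ it is then generated by $m$ classes, which is what gives $a\le m$. The paper carries this out concretely: it writes $y\in T(\sigma)^*$ in the $\Z$-basis attached to a $\Z[\zeta_p]$-basis, imposes $\sigma^*(y)\equiv y \pmod{T(\sigma)}$, and exhibits $m$ explicit generators $B_1,\dots,B_m$ of $A_{T(\sigma)}$. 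For the same reason, your claim that the different alone shows $A_{T(\sigma)}$ is killed by $p$ is insufficient: that fact also depends on $T(\sigma)$ being the orthogonal complement of the invariant lattice inside the unimodular $H^2(X,\Z)$ (e.g. via $pH^2(X,\Z)\subset S(\sigma)\oplus T(\sigma)$), and fails for abstract fixed-point-free order-$p$ lattices such as $A_{p-1}(q)$.
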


\proof The statements $a), b)$ and the first claim in $c)$ are proved in \cite[Section 3]{N1} or \cite[Lemma 1.1]{MO}. For the inequality in $c)$, we will generalize the proof of \cite[Claim 3.4]{MO} as follows.
By point $b)$ we have that $T(\sigma)\cong \Z[\zeta_p]^m$ as a $\Z[\zeta_p]$-module.
Let $e_1,\dots, e_m$ be a basis of $T(\sigma)$ over  $\Z[\zeta_p]$ and let
$$\{b_{ij}:\ i=1,\dots,m,\ j=0,\dots,p-2\}$$ be the corresponding $\Z$ basis of $T(\sigma)$. Since $T(\sigma)$ is $p$-elementary, then any $y\in T(\sigma)^*$ is of the form
$$y=\frac{1}{p}\sum_{i,j} y_{ij}b_{ij},\quad y_{ij}\in \Z.$$
Moreover, since $\sigma^*=id$ on $S(\sigma)$, then $\sigma^*=id$ on $A_{S(\sigma)}\cong A_{T(\sigma)}$. Thus  modulo $T(\sigma)$ we have
$$0\equiv \sigma^*(y)-y= \frac{1}{p}\sum_{i=1}^m(\sum_{j=0}^{p-3} y_{ij}b_{i\, j+1}-y_{i \,p-2}(b_{i 0}+\cdots+b_{i\, p-2})-\sum_{j=0}^{p-2} y_{ij}b_{ij}  ).$$
From the vanishing of the coefficients of the $b_{ij}$'s  it follows that 
$y_{ij}\equiv (j+1)y_{i0} $ modulo $p$.
Thus
$$\frac{1}{p}\sum_{j=0}^{p-2} y_{ij}b_{ij}\equiv y_{i0}[\frac{1}{p}\sum_{j=0}^{p-2}(j+1)b_{ij}]=y_{i0}B_i.$$
 This implies that $A_{T(\sigma)}$ is generated by  $B_1,\dots,B_m$, hence $a\leq m$.\qed\\

\noindent In what follows we will denote by $m=(22-r)/(p-1)$ the rank of $T(\sigma)$ as a $\Z[\zeta_p]$-module ($r$ denotes the rank of $S(\sigma)$).\\

We will now describe the structure of the fixed locus $X^{\sigma}$ of a non-symplectic automorphism $\sigma$ of order $p$ of a $K3$ surface. We can assume $\sigma$ to act on $\omega_X$ as the multiplication by $\zeta_p$.
The action of $\sigma$ can be locally linearized and diagonalized at a fixed point $x\in X^{\sigma}$ (see \S 5, \cite{N1}), so that its possible local actions are

$$ A_{p,t}=\left(\begin{array}{cc}
	\zeta_p^{t+1} &0\\
0&\zeta_p^{p-t}
\end{array}
\right),\ \ t=0,\dots,p-2.$$
If $t=0$ then $x$ belongs to a smooth fixed curve for $\sigma$, otherwise $x$ is an isolated fixed point. We will say that an isolated point $x\in X^{\sigma}$ is of \emph{type }$t$ ($t>0$) if the local action at $x$ is given by $A_{p,t}$ and we will denote by $n_{t}$ the number of isolated points of $\sigma$ of type $t$. 
\begin{lemma}\label{fix}
The fixed locus of $\sigma$ is either empty or the disjoint union of isolated  points and smooth curves. Moreover, in the second case, $X^{\sigma}$ is either the union of two disjoint elliptic curves or of the form
\begin{equation}\label{fixl}X^{\sigma}=C\cup R_1\cup\cdots\cup R_{k}\cup\{p_1,\dots,p_n\},\end{equation}
where $C$ is a smooth curve of genus $g\geq 0$, $R_i$ is a smooth rational curve and $p_i$ is an isolated point.
\end{lemma}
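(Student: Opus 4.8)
The plan is to analyze the fixed locus $X^\sigma$ using the local description of $\sigma$ near its fixed points, together with standard topological and holomorphic constraints on $K3$ surfaces. First I would observe that, since $\sigma$ has finite order, its action can be locally linearized at every fixed point (as recalled above), and since $\sigma$ acts on $\omega_X$ as multiplication by $\zeta_p$, the only possible local forms are the matrices $A_{p,t}$, $t=0,\dots,p-2$. The eigenvalue $t=0$ gives a local action fixing a smooth curve (the $1$-eigenspace is a coordinate curve), while $t>0$ forces an isolated fixed point. Because the fixed locus of a finite-order automorphism of a complex manifold is always a disjoint union of smooth subvarieties of even real codimension, $X^\sigma$ is automatically a disjoint union of smooth curves and isolated points; moreover distinct fixed curves cannot meet, since at an intersection point the local action would have to fix a $2$-dimensional space, i.e. act trivially, contradicting $\sigma^*\omega_X = \zeta_p\omega_X$. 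This already gives that $X^\sigma$ is either empty or of the form $C_1\cup\cdots\cup C_s\cup\{p_1,\dots,p_n\}$ with the $C_i$ smooth disjoint curves.

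Next I would bound the number of fixed curves of positive genus. The key tool is that each smooth fixed curve $C_i$, being fixed pointwise, satisfies $\sigma^*=\id$ on a neighborhood of $C_i$ in a way compatible with the holomorphic $2$-form; writing $\omega_X$ locally as (local coordinate on $C_i$)$\,\wedge\,$(transverse coordinate), the condition $\sigma^*\omega_X=\zeta_p\omega_X$ shows $\sigma$ acts on the normal bundle $N_{C_i/X}$ by $\zeta_p$. Hence the classes $[C_i]$ all lie in the invariant lattice $S(\sigma)\subset S_X$, which is hyperbolic of signature $(1,r-1)$. Two disjoint curves $C_i, C_j$ have $C_i\cdot C_j=0$ and $C_i^2 = 2g(C_i)-2$ by adjunction. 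If two of the fixed curves had nonnegative self-intersection (genus $\geq 1$), the sublattice they span inside the hyperbolic lattice $S(\sigma)$ would contain two orthogonal classes of nonnegative square, which is impossible in a lattice of signature $(1,\ast)$ unless they are proportional and isotropic; a short case analysis then forces $X^\sigma$ to consist of exactly two disjoint curves $E_1,E_2$ with $E_1^2=E_2^2=0$, $E_1\cdot E_2=0$, i.e. two elliptic curves (this is the exceptional case $S\cong U\oplus E_8(2)$). Otherwise at most one fixed curve $C$ has genus $\geq 1$ and all the others $R_1,\dots,R_k$ are smooth rational curves with $R_i^2=-2$, giving exactly the stated dichotomy.

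The main obstacle, I expect, is ruling out the a priori possibility of a fixed locus containing \emph{three or more} curves of positive self-intersection, and more generally pinning down precisely when the two-elliptic-curves case occurs versus the generic case; this is where the hyperbolicity of $S(\sigma)$ and the disjointness $C_i\cdot C_j=0$ must be combined carefully, using that a hyperbolic lattice contains no orthogonal pair of classes both of positive square and cannot contain three mutually orthogonal classes of nonnegative square that are not all isotropic. One must also check that a genus-$0$ fixed curve with negative self-intersection has self-intersection exactly $-2$ (immediate from adjunction on a $K3$, since $C^2 = 2g(C)-2 = -2$), so there are no genus-$0$ curves of self-intersection $<-2$. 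I would organize the argument as: (i) local structure $\Rightarrow$ smoothness and disjointness; (ii) adjunction $\Rightarrow$ $C_i^2 = 2g(C_i)-2$; (iii) lattice-theoretic argument inside the hyperbolic $S(\sigma)$ $\Rightarrow$ at most one curve of genus $\geq 1$ except in the degenerate two-elliptic-curves case. The finer numerical data (the exact values of $n$, $k$, and $g$, and the identification of the two exceptional lattices) is deferred to the later sections, as the statement of this lemma only asserts the qualitative shape of $X^\sigma$.
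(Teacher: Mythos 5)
Most of your outline matches the paper's proof: the local forms $A_{p,t}$ give smoothness and disjointness of the fixed components, adjunction gives $C^2=2g(C)-2$, and the hyperbolicity of the Picard lattice (the paper invokes the Hodge index theorem on $S_X$; your use of $S(\sigma)$ is fine, though one should note it is hyperbolic because an averaged K\"ahler class is invariant) shows that two disjoint fixed curves cannot both have positive square, so at most one fixed curve has genus $\geq 2$ and any two fixed curves of genus $\geq 1$ must be elliptic with orthogonal isotropic, hence proportional, classes.

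The genuine gap is your step (iii) in the two-elliptic-curve case: you claim that ``a short case analysis'' in the lattice forces $X^{\sigma}$ to consist of \emph{exactly} the two elliptic curves $E_1\cup E_2$, but no signature argument can do this. Additional fixed smooth rational curves have square $-2$ and are orthogonal to $[E_1]=[E_2]$, a configuration perfectly admissible in a hyperbolic lattice (e.g. inside $U\oplus\langle -2\rangle$), and isolated fixed points are not seen by the N\'eron--Severi lattice at all; yet the lemma asserts that in this case there are no further components. The paper closes this with a geometric argument you are missing: since $E_1,E_2$ are disjoint with $E_i^2=0$, their classes are linearly equivalent and define a $\sigma$-invariant elliptic fibration $\varphi:X\to\PP^1$ with $E_1,E_2$ as fibres; the local action along the pointwise fixed fibre $E_1$ is $(x,y)\mapsto(x,\zeta_p y)$, so $\sigma$ acts non-trivially on the base $\PP^1$, hence has exactly two fixed points there, and every fixed point of $\sigma$ must lie in the two corresponding fibres, which are $E_1$ and $E_2$ themselves. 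Therefore $X^{\sigma}=E_1\cup E_2$, with no extra rational curves or isolated points. Without this (or an equivalent) argument, your proof establishes only that at most one non-rational curve occurs unless all fixed curves of positive genus are elliptic and homologous, which is weaker than the stated dichotomy.
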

\proof
The first statement follows from the previous discussion about the local action of $\sigma$.
By Hodge index theorem the Picard lattice of $X$ is hyperbolic. Thus, if $X^{\sigma}$ contains a smooth curve $C$ of genus $g>1$, then the other curves in the fixed locus are rational (by adjunction formula, since their class have negative self-intersection).

If $X^{\sigma}$ contains an elliptic curve, then the other fixed curves can be either rational or elliptic.  
Assume that there are two fixed elliptic curves $E_1,E_2$. Since $E_1,E_2$ are disjoint, then their classes are linearly equivalent and define a $\sigma$-invariant elliptic fibration $\varphi:X\rightarrow \PP^1$. Since the local action of $\sigma$ at $p\in E_1$ is of type $(x,y)\mapsto (x,\zeta_p y)$, then $\sigma$ induces a non-trivial action on $\PP^1$.
Thus $\sigma$ has exactly two fixed points in $\PP^1$ and its fixed locus is equal to $E_1\cup E_2$. This concludes the proof.
\qed\\

The aim of the rest of this section is to relate the topological invariants $g,k,n$ of the fixed locus $X^{\sigma}$ to the lattice invariants $m, a$ (and so $r,a$) of $T(\sigma)$.
The methods we will apply generalize techniques in  \cite{Kh} and \cite{N3}. 

\begin{lemma}\label{chi} The Euler characteristic of $X^{\sigma}$ is $24-mp.$
\end{lemma}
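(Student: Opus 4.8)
The plan is to compute the Euler characteristic of $X^\sigma$ topologically, via the Lefschetz fixed point formula applied to $\sigma$, and then to evaluate the alternating trace of $\sigma^*$ on $H^*(X,\Z)$ in terms of the $\Z[\zeta_p]$-module structure of $T(\sigma)$. Since $\sigma$ has prime order $p$ and its fixed locus is a disjoint union of smooth curves and isolated points (Lemma \ref{fix}), the holomorphic Lefschetz philosophy and the ordinary topological Lefschetz formula both apply; here the relevant one is the topological formula
$$\chi(X^\sigma)=\sum_{i=0}^{4}(-1)^i\,\mathrm{tr}\bigl(\sigma^*\mid H^i(X,\Z)\bigr).$$
For a $K3$ surface $H^0$ and $H^4$ are trivial one-dimensional spaces on which $\sigma^*$ acts as the identity, contributing $1+1=2$, and $H^1=H^3=0$. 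So everything comes down to $\mathrm{tr}(\sigma^*\mid H^2(X,\Z))$.

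Next I would decompose $H^2(X,\Z)\otimes\Q$ into the invariant part and its orthogonal complement. The invariant sublattice $S(\sigma)$ has rank $r$ and $\sigma^*$ acts as the identity there, contributing $r$ to the trace. On $T(\sigma)=S(\sigma)^\perp$, which has rank $22-r=(p-1)m$, the isometry $\sigma^*$ makes $T(\sigma)$ a free $\Z[\zeta_p]$-module of rank $m$ by Theorem \ref{ns}(b); equivalently $T(\sigma)\otimes\Q\cong\Q(\zeta_p)^{\oplus m}$ as a $\Q[\sigma^*]$-module, with no trivial summand since $\sigma^*$ has no nonzero fixed vectors on $T(\sigma)$. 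The trace of multiplication by $\zeta_p$ on $\Q(\zeta_p)$ is the sum of the primitive $p$-th roots of unity, namely $-1$ (the coefficient relation from $1+\zeta_p+\cdots+\zeta_p^{p-1}=0$). Hence $\mathrm{tr}(\sigma^*\mid T(\sigma))=-m$, and therefore
$$\chi(X^\sigma)=2+\mathrm{tr}(\sigma^*\mid H^2(X,\Z))=2+r-m=2+(22-(p-1)m)-m=24-pm.$$

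One point that needs a brief justification is that the power of $\sigma^*$ appearing is $\sigma^*$ itself and not some other power: since $p$ is prime, all nontrivial powers $\sigma^j$ ($1\le j\le p-1$) are also non-symplectic automorphisms of order $p$ with the same fixed locus structure, and multiplication by $\zeta_p^j$ on $\Q(\zeta_p)$ still has trace equal to the sum of all primitive $p$-th roots of unity, which is again $-1$; so the computation is insensitive to this choice. The main (and only mild) obstacle is simply being careful that $\sigma^*$ has no invariants on $T(\sigma)$ so that no extra $+1$'s sneak into the trace — but this is exactly the statement that $T(\sigma)=S(\sigma)^\perp$ and that $S(\sigma)$ is by definition the full invariant lattice, together with semisimplicity of the $\Q[\Z/p]$-action. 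Recalling $r=22-(p-1)m$ finishes the proof; note the formula also transparently gives $\chi(X^\sigma)=24-pm$ directly.
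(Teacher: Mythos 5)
Your proof is correct and follows essentially the same route as the paper: apply the topological Lefschetz fixed point formula, note $H^0$ and $H^4$ contribute $2$, split $H^2$ as $S(\sigma)\oplus T(\sigma)$ rationally, and use the free $\Z[\zeta_p]$-module structure of $T(\sigma)$ to get trace $-m$ there, so $\chi(X^\sigma)=2+r-m=24-pm$. The extra remarks about powers of $\sigma$ and the absence of invariants on $T(\sigma)$ are fine but not needed beyond what Theorem \ref{ns} already provides.
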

\proof
By the topological Lefschetz formula we have:
\[ \chi (X^{\sigma}) =\sum _{i=0}^{4}(-1)^{i} \text{tr}(\sigma ^{\ast }|H^{i}(X, \mathbb{R}))
 = 2+\text{tr}(\sigma ^{\ast }|S(\sigma))+\text{tr}(\sigma^{\ast }|T(\sigma))=2+r-m,\]
 where $r$ is the rank of $S(\sigma)$. This gives the statement since $m(p-1)=22-r$.
 \qed\\

\noindent   Observe that, by Lemma \ref{fix}, the Euler characteristic of $X^{\sigma}$ is either zero or
$\mathcal X(X^{\sigma})=(2-2g)+2k+n$. In the second case, let $\alpha=1-g+k.$
\begin{thm}\label{point}
Let $\sigma$ be a non-symplectic automorphism of prime order $p$ of a $K3$ surface and let $r$ be the rank of its invariant lattice $S(\sigma)$. 
Then the types $n_i$ of the isolated fixed points of $\sigma$ and the integer $\alpha$ can be expressed in  function of $r$ as in Table \ref{t}.
 \end{thm}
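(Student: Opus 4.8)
The plan is to extract relations among the local invariants $n_0,n_1,\dots,n_{p-2}$ (where $n_0$ records the contribution of fixed curves) and $\alpha=1-g+k$ by combining holomorphic and topological fixed-point formulas. First I would apply the holomorphic Lefschetz fixed-point formula to $\sigma$ acting on $\cO_X$. Since $X$ is a $K3$ surface, $\sum_{i}(-1)^i\operatorname{tr}(\sigma^*\mid H^i(X,\cO_X))=1+\bar\zeta_p$ (the contribution $1$ from $H^0$ and $\overline{\zeta_p}$ from $H^2=\C\overline{\omega_X}$, using that $\sigma^*\omega_X=\zeta_p\omega_X$). On the other side, each smooth fixed curve $C'$ of genus $g'$ contributes $\tfrac{1-g'}{1-\zeta_p}-\tfrac{\zeta_p\deg(N_{C'})}{(1-\zeta_p)^2}$, and each isolated fixed point of type $t$ contributes $\tfrac{1}{(1-\zeta_p^{t+1})(1-\zeta_p^{p-t})}$. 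Summing the genus data over all fixed curves gives precisely $\alpha=\sum(1-g')=1-g+k$ (the rational curves contribute $0$ each, $C$ contributes $1-g$, the two-elliptic-curves case gives $\alpha=0$); and the normal bundle degrees are controlled since $\sigma$ acts on $N_{C'}$ by $\zeta_p$, so $\deg N_{C'}=2g'-2$ by adjunction. This turns the holomorphic Lefschetz formula into a single linear equation relating $\alpha$ and $\sum_t n_t\,\tfrac{1}{(1-\zeta_p^{t+1})(1-\zeta_p^{p-t})}$.

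Next I would exploit that this is one equation in $\C$, but $\sigma$ and all its powers $\sigma^j$ ($1\le j\le p-1$) have the \emph{same} fixed locus with the local action at a type-$t$ point becoming $A_{p,t}$ with $\zeta_p$ replaced by $\zeta_p^j$. Applying the holomorphic Lefschetz formula to each $\sigma^j$ yields $p-1$ equations; together with the topological Lefschetz formula (Lemma \ref{chi}), which gives the single rational relation
\[ (2-2g)+2k+n \;=\; 24 - mp, \]
equivalently $2\alpha + n = 24 - mp = 2 + r - m$, we obtain enough linear constraints. The key algebraic input is that the vector of values $\bigl(\tfrac{1}{(1-\zeta_p^{j(t+1)})(1-\zeta_p^{j(p-t)})}\bigr)_{j=1}^{p-1}$, for $t=1,\dots,p-2$, together with the vector coming from the curve term, span (or have predictable rank in) the relevant space; concretely, taking rational linear combinations (i.e. applying $\sum_j$ with suitable coefficients, or equivalently reading off traces of $\Q[\zeta_p]/\Q$) collapses the $p-1$ complex equations into $\lfloor (p-1)/2\rfloor$ or so independent $\Q$-linear relations among $\alpha,n_1,\dots,n_{p-2}$.

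For the small primes $p=2,3,5,7$ this linear system, once assembled, has few enough unknowns that it pins down each $n_t$ and $\alpha$ as an explicit affine-linear function of $r$ (using $m=(22-r)/(p-1)$); one then checks by direct substitution that the resulting values are the entries of Table \ref{t}. For $p=11,13,17,19$ the number of point-types grows, but the same system — holomorphic Lefschetz for $\sigma,\dots,\sigma^{p-1}$ plus topological Lefschetz — still determines the $n_t$ in terms of $r$; here one may additionally invoke that $T(\sigma)\cong\Z[\zeta_p]^m$ is a free $\Z[\zeta_p]$-module (Theorem \ref{ns}b) so that the representation of $\sigma^*$ on $T(\sigma)\otimes\C$ decomposes with each primitive character appearing with multiplicity exactly $m$, which is exactly the input needed to evaluate $\operatorname{tr}(\sigma^{j*}\mid T(\sigma))=-m$ uniformly. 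I expect the main obstacle to be bookkeeping: verifying that the $(p-1)$ holomorphic Lefschetz equations are consistent and have the claimed unique solution — i.e., checking the rank of the $\zeta_p$-power matrix $\bigl[(1-\zeta_p^{j(t+1)})^{-1}(1-\zeta_p^{j(p-t)})^{-1}\bigr]_{j,t}$ and that the forced solution has nonnegative integer entries realizing Table \ref{t} — rather than any single conceptual step. This is the classical Lefschetz computation underlying such classifications, and the novelty for $p=5,7$ is only in carrying it out explicitly and matching it to the lattice invariant $r$.
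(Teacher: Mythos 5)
Your proposal is correct and is essentially the paper's own argument: the paper likewise computes the holomorphic Lefschetz number $L(\sigma)=1+\zeta_p^{p-1}$ and equates it with the local contributions $\sum_t n_t a(t)+b(g)+kb(0)$ (the curve term collapsing to a multiple of $\alpha=1-g+k$ exactly as you describe), and then uses the topological Lefschetz formula of Lemma \ref{chi} via $\chi(X^{\sigma})=2\alpha+n$ to express $\alpha$ in terms of $r$. Your device of applying the holomorphic formula to all powers $\sigma^j$ is just the Galois-conjugate repackaging of the paper's single equation over $\Q(\zeta_p)$, so the two arguments coincide.
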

\begin{center} 
\begin{table}[h]
 \begin{tabular}{|c || c || c | c | c | c | c | c | c | c | c  | c |}
\hline 
$\scriptstyle{p}$ & $\scriptstyle{\alpha}$ & $\scriptstyle{n_{1}}$ & $\scriptstyle{n_{2}}$ &$\scriptstyle{n_{3}}$ &$\scriptstyle{n_{4}}$ &$\scriptstyle{n_{5}}$ &$\scriptstyle{n_{6}}$ &$\scriptstyle{n_{7}}$ &$\scriptstyle{n_{8}}$ & $\scriptstyle{n_{9}}$ & $\scriptstyle{n}$   \\ [1ex]
\hline
$\scriptstyle{2}$ & $\scriptstyle{r-10}$ & & & & & & & & & & $\scriptstyle{0}$ \\ [1ex]
$\scriptstyle{3}$ & $\frac{r-8}{2}$ &  $\scriptstyle{\alpha+3}$ & & & & & & & & & $\scriptstyle{\alpha+3}$   \\ [1ex]
$\scriptstyle{5}$ & $\frac{r-6}{4} $ &   $\scriptstyle{2\alpha+3}$ & $\scriptstyle{ 1+\alpha}$   && &&&&&  &   $\scriptstyle{3\alpha+4}$    \\ [1ex]
$\scriptstyle{7}$ & $ \frac{r-4}{6} $ & $\scriptstyle{2\alpha+2}$ & $\scriptstyle{ 1+2\alpha}$ & $\scriptstyle{ \alpha}$ &&&&&& & $\scriptstyle{5\alpha+3}$ \\ [1ex]
$\scriptstyle{11}$ & $\frac{r-2}{10} $   & $\scriptstyle{1+2\alpha}$ & $\scriptstyle{2\alpha}$ & $\scriptstyle{ 2\alpha}$ & $\scriptstyle{ 1+2\alpha}$& $\scriptstyle{\alpha} $   & & & &   & $\scriptstyle{9\alpha+2}$ \\[1ex]
$\scriptstyle{13}$ & $\frac{r+2}{12}$ & $\scriptstyle{1+2\alpha}$ & $\scriptstyle{1+2\alpha}$ & $\scriptstyle{ 2\alpha}$ & $\scriptstyle{ 2\alpha-1}$ & $\scriptstyle{2\alpha-2} $& $\scriptstyle{\alpha-1}$ & & & & $\scriptstyle{11\alpha-2}$  \\[1ex]
$\scriptstyle{17}$ & $\frac{r-6}{16}$ & $\scriptstyle{2\alpha}$& $\scriptstyle{2\alpha}$& $\scriptstyle{2\alpha}$& $\scriptstyle{2\alpha}$& $\scriptstyle{2\alpha+1}$ & $\scriptstyle{2\alpha+2}$ & $\scriptstyle{ 2\alpha+3}$ & $\scriptstyle{ \alpha+1}$ &  & $\scriptstyle{15\alpha+7}$  \\ [1ex]
$\scriptstyle{19}$ & $\frac{r-4}{18}$ & $\scriptstyle{2\alpha}$ & $\scriptstyle{ 2\alpha} $ & $ \scriptstyle{2\alpha} $ & $ \scriptstyle{2\alpha+1} $ & $ \scriptstyle{2\alpha+2} $ & $ \scriptstyle{2\alpha+1} $ & $ \scriptstyle{2\alpha+1} $ & $\scriptstyle{ 2\alpha} $ & $\scriptstyle{ \alpha}$ & $\scriptstyle{17\alpha+5}$  \\ [1ex]
\hline
\end{tabular}
\vspace{0.5cm}
\caption{Isolated fixed points}\label{t}
\end{table}
\end{center}
\vspace{-1cm}

\proof 
The holomorphic Lefschetz formula \cite[Theorem 4.6]{AS2} allows to compute the holomorphic Lefschetz number $L(\sigma )$ of $\sigma$ in two ways. First we have that
\[ L(\sigma ) = \sum _{i=0}^{2} (-1)^i\text{tr}(\sigma ^{\ast }|H^{i}(X, \mathcal{O}_{X})). \]
By Serre duality $H^{2}(X, \mathcal{O}_{X})\simeq H^{0}(X,\mathcal{O}_{X}(K_{X}))^{\vee }$, so that  
\begin{equation}\label{l1}
L(\sigma )=1+\zeta_p ^{p-1}.
\end{equation}
On the other hand, if the fixed locus is as in (\ref{fixl}), we also have that
\[ L(\sigma ) = \sum_{t=1}^{p-2} n_{t} a(t)+b(g)+kb(0), \]
where  
\begin{equation}\label{l2}
 a(t)=\frac{1}{\det(I-\sigma^{\ast }|T_{t} )}
  =\frac{1}{\det(I-A_{p,t})} 
  =\frac{1}{(1-\zeta ^{t})(1-\zeta^{p-t+1})}, 
 \end{equation}
 with $T_{t}$ the tangent space of $X$ at a point of type $t$, and
\begin{equation}\label{l3}
b(g) =\frac{1-g}{1-\zeta_p}- \frac{\zeta_p (2g-2)}{(1-\zeta_p)^{2}} \\
  =\frac{(1+\zeta_p)(1-g)}{(1-\zeta_p )^{2}},\quad  b(0)=\frac{1+\zeta_p}{(1-\zeta_p)^2}.
\end{equation}
 If $X^{\sigma}$ is either empty or the union of two elliptic curves, then $L(\sigma)=0$.

Combining (\ref{l1}), (\ref{l2}) and (\ref{l3}) we get  the types $n_i$ appearing in Table \ref{t}.
 Moreover, since $\chi (X^{\sigma})=2\alpha+n$, we get the values of $\alpha$ in Table \ref{t} by applying  Lemma \ref{chi}. \qed\\
 
 We now assume that the fixed locus is as in (\ref{fixl}) and we will  determine the genus $g$ as a function of the invariants $m,a$ of $T(\sigma)$ by means of Smith exact sequences.
 We will consider the following isometries of $H^2(X,\Z)$:
 \[
 \frak{g} =1+\sigma^* +{(\sigma^*)}^{2}+\cdots+{(\sigma^*)}^{p-1} 
 ,\quad \frak{h} =1-\sigma^*.
\]
Observe that $\ker \frak{h} =S(\sigma)$, $\ker \frak{g} =T(\sigma)$ and $|H^{2}(X,\mathbb{Z})/S(\sigma)\oplus T(\sigma)|=p^{a}$. We now consider  the coefficient homomorphism 
\[
c:H^{2}(X, \mathbb{Z})\longrightarrow H^{2}(X, \mathbb{Z}_p)
.\]
Observe that  $c(S(\sigma)\oplus T(\sigma))$
 coincides with $E=\ker \mathfrak g\subset H^2(X,\Z_p)$. This implies that $a=\dim H^{2}(X, \mathbb{Z}_p)-\dim E$.

Let $C(X)$ be the chain complex of $X$ with coefficients in $\mathbb{Z}_p$. 
The automorphism $\sigma$ acts on $C(X)$ and gives rise to chain subcomplexes $\frak{g}  C(X)$ and $\frak{h} C(X)$. 
We denote by $H_{i}^{\frak{g} }(X)$, $H_{i}^{\frak{h} }(X)$ the associated Smith special homology groups with coefficients in $\mathbb{Z}_p$ as in \cite[Definition 3.2, Ch.~III]{Bredon} and by $\chi ^{\frak{g} }(X)$, $\chi ^{\frak{h} }(X)$ the corresponding Euler characteristics. 
By \cite[(3.4), Ch.~III]{Bredon} there is an isomorphism
$$H_{i}^{\frak{g} }(X)\simeq H_{i}(X/\langle \sigma \rangle, X^{\sigma }),$$
where $X^{\sigma }$ is identified with its image in the quotient surface $X/\langle \sigma \rangle$.

In what follows, the coefficients are intended to be in $\Z_p$. Observe that $\frak{g} =\frak{h}^{p-1} $ over $\mathbb{Z}_p$.
Let $\rho=\frak{h} ^{i}$ and $\bar \rho=\frak{h} ^{p-i}$,
then for any $i,j=1,\dots, p-1$ we have the exact triangles (\cite[Theorem 3.3 and (3.8), Ch.~III]{Bredon})
$$\xymatrix{
(\text{T1})\ \ & &H_*(X) \ar[dl]_{\rho_*} &   \\
&H_*^{\rho}(X)\ar[rr]_{}& &H_*^{\bar\rho}(X)\oplus H_*(X^{\sigma })\ar[ul]_{i_*}\\  
(\text{T2})\ \ & &H_*^{\frak{h} ^{j}}(X) \ar[dl]_{\frak{h}_*} &   \\
&H_*^{\frak{h} ^{j+1}}(X)\ar[rr]_{}& &H_*^{\frak{g} }(X) \ar[ul]_{i_*}} 
$$
where $\frak{h}_*$, $i_*$ and $\rho_*$ have degree $0$ and the horizontal arrows have degree $-1$.
The triangle (T1) induces two long homology sequences for $\rho=\frak{g}$ and $\frak{h}$. 
In particular, since $H_1(X)=H_3(X)=0$, then (T1) induces the  sequences:
 \[
0\rightarrow H_{3}^{\frak{g} }(X)
\stackrel{\gamma_{3}}{\rightarrow }H_{2}^{\frak{h} }(X)\oplus H_{2}(X^{\sigma })
\stackrel{\alpha _{2}}{\rightarrow } H_{2}(X)
\stackrel{\beta _{2}}{\rightarrow } H_{2}^{\frak{g} }(X)
\stackrel{\gamma_{2}}{\rightarrow }H_{1}^{\frak{h} }(X)\oplus H_{1}(X^{\sigma })\rightarrow 0,
\]
\[
0\rightarrow H_{3}^{\frak{h} }(X)
\stackrel{\gamma_{3}'}{\rightarrow }H_{2}^{\frak{g} }(X)\oplus H_{2}(X^{\sigma })
\stackrel{\alpha _{2}'}{\rightarrow } H_{2}(X)
\stackrel{\beta _{2}'}{\rightarrow } H_{2}^{\frak{h} }(X)
\stackrel{\gamma_{2}'}{\rightarrow }H_{1}^{\frak{g} }(X)\oplus H_{1}(X^{\sigma })\rightarrow 0.
\]

\begin{lemma}\label{kye1}
$H_{0}^{\frak{g}}(X)=H_{0}^{\frak{h}}(X)=0$, $\dim H_i^\frak{h}(X)=\dim H_i^\frak{g}(X)=1$ for $i=1,3,4$.
\end{lemma}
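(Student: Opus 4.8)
The plan is to compute each of the Smith special homology groups by combining the exact triangles (T1) and (T2) with the known (co)homology of a $K3$ surface, namely $H_0(X)=H_4(X)=\Z_p$, $H_1(X)=H_3(X)=0$, and $H_2(X)=\Z_p^{22}$. First I would handle degree $0$: by \cite[Theorem 3.3, Ch.~III]{Bredon} the triangle (T1) in degree $0$ reads $H_0^{\rho}(X)\to H_0^{\bar\rho}(X)\oplus H_0(X^{\sigma})\to H_0(X)$, and since $X$ is connected the composite $H_0(X^{\sigma})\to H_0(X)$ is surjective (every point of $X$ is connected by a path to a fixed point, or more simply $X^\sigma\neq\emptyset$ in the cases of interest and the map is induced by inclusion into a connected space). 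Tracking this through the long exact sequence forces $H_0^{\frak g}(X)=H_0^{\frak h}(X)=0$; alternatively one uses the isomorphism $H_i^{\frak g}(X)\cong H_i(X/\langle\sigma\rangle,X^\sigma)$ already quoted in the text, together with the analogous statement for $\frak h$, and the fact that $X/\langle\sigma\rangle$ is connected and contains $X^\sigma$, so the relative $H_0$ vanishes.

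For degrees $i=1,3,4$ I would argue by a dimension count using the additivity of Euler characteristics over the exact triangles. The key numerical input is that over $\Z_p$ the chain complex splits, via the idempotent-like decomposition coming from $\frak g=\frak h^{p-1}$, so that $\chi(X)=\chi^{\frak g}(X)+\chi^{\frak h}(X)$ and, iterating (T2), $\chi^{\frak h^{j}}(X)$ interpolates linearly; in particular the triangle (T1) with $\rho=\frak g,\bar\rho=\frak h$ gives $\chi(X)=\chi^{\frak g}(X)+\chi^{\frak h}(X)+\chi(X^\sigma)$ — wait, more carefully, (T1) yields $\chi^{\rho}(X)=\chi^{\bar\rho}(X)+\chi(X^\sigma)-\chi(X)$ rearranged appropriately; combining the two instances $\rho=\frak g$ and $\rho=\frak h$ with $\chi(X)=24$ and $\chi(X^\sigma)=24-mp$ (Lemma \ref{chi}) pins down $\chi^{\frak g}(X)+\chi^{\frak h}(X)$. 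Then I would show each $H_i^{\frak h}(X)$ for $i=1,3,4$ is forced to be $1$-dimensional by feeding into (T1) the vanishing $H_1(X)=H_3(X)=0$ and $H_4(X)=\Z_p$: in top degree (T1) gives $H_4^{\rho}(X)\hookrightarrow H_4^{\bar\rho}(X)\oplus H_4(X^\sigma)\to H_4(X)\to H_3^{\rho}(X)\to\cdots$, and since $X^\sigma$ is a union of curves and points its $H_4$ vanishes, so $H_4^{\frak g}(X)\cong H_4^{\frak h}(X)$ maps injectively into $H_4(X)=\Z_p$; symmetry between $\frak g$ and $\frak h$ plus the fact that the full complex has $H_4(X)=\Z_p$ forces this to be an isomorphism, giving $\dim H_4^{\frak h}=\dim H_4^{\frak g}=1$. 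Pushing the same sequence one step down, using $H_3(X)=0$ and $H_4^\frak{g}(X)\xrightarrow{\sim}H_4(X)$, yields $H_3^{\frak h}(X)\cong H_3^{\frak g}(X)$ and then a second application (or Poincaré–Lefschetz type duality for the Smith sequences, \cite[Ch.~III]{Bredon}) gives that this common group is $1$-dimensional; the case $i=1$ is then obtained dually from $i=3$, or directly from the tail of (T1) together with $H_1(X)=0$ and $H_1(X^\sigma)$ being accounted for separately in the sequences displayed above.

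The main obstacle I anticipate is not any single computation but rather bookkeeping the two parallel exact triangles consistently — in particular being careful that $\frak g$ and $\frak h$ play genuinely symmetric roles over $\Z_p$ (this is where $\frak g=\frak h^{p-1}$ is used, and one must check the degree-$0$ and degree-$4$ terms really are symmetric, which ultimately rests on $X$ being connected with $H_4(X)=\Z_p$ and on $X^\sigma$ having no component of dimension $\geq 2$ other than the elliptic/genus-$g$ curves, whose top homology still vanishes). Once the symmetry and the Euler-characteristic identities are in place, the values $0$ in degree $0$ and $1$ in degrees $1,3,4$ drop out of the long exact sequences essentially by counting dimensions, with no dependence on $g$, $k$, $n$, $a$ or $m$ — which is exactly what makes this lemma the right stepping stone toward expressing $g$ in terms of $m$ and $a$ in the sequel.
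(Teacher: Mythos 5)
Your degree-$0$ argument is essentially correct and close to the paper's (the paper identifies $H_0^{\frak g}(X)\cong H_0(X/\langle\sigma\rangle,X^{\sigma})=0$ and then transfers this to $\frak h$ by repeated use of (T2); your route through the tail of (T1) and connectedness also works). The genuine gap is in degrees $3$ and $4$. Correctly ordered, the top-degree piece of (T1) reads $0=H_5^{\rho}(X)\to H_4^{\bar\rho}(X)\oplus H_4(X^{\sigma})\to H_4(X)\to H_4^{\rho}(X)\to H_3^{\bar\rho}(X)\oplus H_3(X^{\sigma})\to H_3(X)=0$ (your displayed version is mis-ordered: the boundary sends $H_4^{\rho}$ to $H_3^{\bar\rho}\oplus H_3(X^{\sigma})$). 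Since $H_3(X^{\sigma})=H_4(X^{\sigma})=0$, this gives only the upper bounds $\dim H_4^{\frak g},\dim H_4^{\frak h}\leq 1$, the equality $\dim H_3^{\frak g}=\dim H_3^{\frak h}$, and the relation $\dim H_4^{\frak g}+\dim H_4^{\frak h}-\dim H_3^{\frak g}=1$. These constraints are also satisfied by $(\dim H_4^{\frak g},\dim H_4^{\frak h},\dim H_3)=(1,0,0)$ or $(0,1,0)$, so a lower bound is indispensable, and your appeal to ``symmetry between $\frak g$ and $\frak h$'' cannot supply it: the two families of Smith groups are genuinely asymmetric (already in degree $2$ their dimensions differ by $m(p-2)$, cf.\ Lemma \ref{kye2} and Proposition \ref{smith}), and the identification $H_i^{\frak g}(X)\cong H_i(X/\langle\sigma\rangle,X^{\sigma})$ has no analogue for $\frak h$. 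The paper closes exactly this gap with two inputs absent from your sketch: first, $\dim H_4^{\frak g}(X)=\dim H_4(X/\langle\sigma\rangle)=1$ from the exact sequence of the pair $(X/\langle\sigma\rangle,X^{\sigma})$; second, the injection $H_4^{\frak g}(X)\hookrightarrow H_4^{\frak h}(X)$ coming from (T2) in degree $4$ (possible because homology vanishes in degree $5$), which gives $\dim H_4^{\frak h}\geq 1$ and, with the (T1) bound, pins all the remaining dimensions at $1$.

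Two smaller points. Your Euler-characteristic detour is not needed for this lemma (it is the content of Proposition \ref{smith}, which uses this lemma, not the other way around). And for $i=1$ your sketch proves nothing beyond the equality: the tail of (T1), given the vanishing in degree $0$, yields $\dim H_1^{\frak g}=\dim H_1^{\frak h}=\dim H_0(X^{\sigma})-1$, i.e.\ the number of connected components of $X^{\sigma}$ minus one, which is not $1$ in general; note the paper's own proof likewise only establishes (and later only uses) the equality $\dim H_1^{\frak g}=\dim H_1^{\frak h}$ in degree $1$, so no argument of the kind you propose can recover the value ``$1$'' there.
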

\begin{proof}
Since $X/\langle \sigma \rangle$ is connected, then
$H_{0}^{\frak{g}}(X)\simeq H_{0}(X/\langle \sigma \rangle, X^{\sigma })=0$ (see \cite[Prop. 13.10]{Gr}). This implies that $\dim H_{0}^{\frak{h}}(X)=0$ by repeated use of (T2). By (T1), this gives $\dim H_1^\frak{h}(X)=\dim H_1^\frak{g}(X)$.
 
Moroever, by (T1) we get $\dim H_{3}^{\frak{h} }(X)=\dim H_{3}^{\frak{g} }(X)$ and $\dim H_4^\frak{g}(X)+\dim H_4^\frak{h}(X)-\dim H_3^\frak{g}(X)=1.$
The exact sequence of the pair $(X/\langle \sigma \rangle, X^{\sigma})$ gives $\dim H_4^\frak{g}(X)=\dim H_4(X/\langle \sigma\rangle)=1$, which implies that
$\dim H_4^\frak{h}(X)=\dim H_3^\frak{g}(X)=\dim H_3^\frak{h}(X)$.
Observe that $\dim H_4^\frak{h}(X)\leq 1$ by (T1) (has an injective map to $H^4(X)$) and $\dim H_4^\frak{h}(X)\geq 1$ by (T2) (there is an injective map from $H_4^\frak{g}(X)$ to $H_4^\frak{h}(X)$).
This gives the statement.
\end{proof}
\begin{lemma}\label{kye3}
$i_*:H_2^{\frak g}\ra H_2^{\frak h}$ is injective.
\end{lemma}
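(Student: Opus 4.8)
The plan is to read off $i_*$ as a map in the long exact sequence of the exact triangle (T2) taken with $j=1$, and to show that the term feeding into it vanishes. Around degree $2$ that sequence reads
\[
H_3^{\frak{h}}(X)\xrightarrow{\ \frak{h}_*\ }H_3^{\frak{h}^{2}}(X)\xrightarrow{\ \partial\ }H_2^{\frak{g}}(X)\xrightarrow{\ i_*\ }H_2^{\frak{h}}(X),
\]
so $\ker\bigl(i_*\colon H_2^{\frak{g}}(X)\to H_2^{\frak{h}}(X)\bigr)=\im\partial=\mathrm{coker}\bigl(\frak{h}_*\colon H_3^{\frak{h}}(X)\to H_3^{\frak{h}^{2}}(X)\bigr)$, and it is enough to prove that $\frak{h}_*\colon H_3^{\frak{h}}(X)\to H_3^{\frak{h}^{2}}(X)$ is surjective. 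Throughout I use that $H_i^{\rho}(X)=0$ for $i\ge 5$ (since $X$ is real $4$-dimensional), that $H_3(X)=0$ and $H_3(X^{\sigma})=H_4(X^{\sigma})=0$ (since $X$ is a $K3$ surface and $X^{\sigma}$ a disjoint union of curves and points), and that $H_4(X;\Z_p)=\Z_p$.

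The main step is to locate the intermediate Smith homology groups $H_i^{\frak{h}^{j}}(X)$ in degrees $i=3,4$ for $1\le j\le p-1$, the ends $j=1$ ($\frak{h}$) and $j=p-1$ ($\frak{g}$, as $\frak{g}=\frak{h}^{p-1}$ over $\Z_p$) being handled by Lemma \ref{kye1}. In degree $4$: triangle (T1) with $\rho=\frak{h}^{p-j}$ (so $\bar\rho=\frak{h}^{j}$) gives, after the vanishing above, an injection $H_4^{\frak{h}^{j}}(X)\hookrightarrow H_4(X)=\Z_p$, so $\dim H_4^{\frak{h}^{j}}(X)\le 1$; while for $1\le j\le p-2$ the first nonzero term in the long exact sequence of triangle (T2) with parameter $j$ is $i_*\colon H_4^{\frak{g}}(X)\to H_4^{\frak{h}^{j}}(X)$, which is thus injective, so $\dim H_4^{\frak{h}^{j}}(X)\ge 1$ by Lemma \ref{kye1}. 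Hence $\dim H_4^{\frak{h}^{j}}(X)=1$ for all $j=1,\dots,p-1$, and that injection $i_*$ is an isomorphism; consequently, for $1\le j\le p-2$, the map $\frak{h}_*\colon H_4^{\frak{h}^{j}}(X)\to H_4^{\frak{h}^{j+1}}(X)$ in (T2) is zero, so the next arrow $H_4^{\frak{h}^{j+1}}(X)\to H_3^{\frak{g}}(X)$ is injective, hence an isomorphism of $1$-dimensional spaces, which forces $i_*\colon H_3^{\frak{g}}(X)\to H_3^{\frak{h}^{j}}(X)$ to vanish for $j=1,\dots,p-2$. Feeding this back into the long exact sequence of (T2) with parameter $j$, the map $\frak{h}_*\colon H_3^{\frak{h}^{j}}(X)\to H_3^{\frak{h}^{j+1}}(X)$ then has trivial kernel, i.e. is injective, for $j=1,\dots,p-2$.

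Composing, the chain
\[
H_3^{\frak{h}}(X)=H_3^{\frak{h}^{1}}(X)\hookrightarrow H_3^{\frak{h}^{2}}(X)\hookrightarrow\cdots\hookrightarrow H_3^{\frak{h}^{p-1}}(X)=H_3^{\frak{g}}(X)
\]
is an injection of a $1$-dimensional space into a $1$-dimensional space (both by Lemma \ref{kye1}), so every intermediate group is $1$-dimensional and every arrow, in particular $\frak{h}_*\colon H_3^{\frak{h}}(X)\to H_3^{\frak{h}^{2}}(X)$, is an isomorphism. Its surjectivity gives the claim, by the first paragraph. I expect the genuinely delicate part to be the middle paragraph, i.e. controlling the intermediate groups $H_i^{\frak{h}^{j}}(X)$: this is a diagram chase that alternates between triangles (T1) and (T2), uses that the chain of exponents $1,\dots,p-1$ is anchored at both ends by Lemma \ref{kye1}, and requires care to keep all exponents in the range $\{1,\dots,p-1\}$ for which these triangles are available, treating the endpoints $j=1$ and $j=p-1$ separately via $\frak{h}^{p-1}=\frak{g}$.
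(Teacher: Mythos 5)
Your argument is correct, and it reaches the conclusion by a more direct route than the paper. The paper argues by contradiction: assuming $i_*$ is not injective, it invokes the exact square of Bredon (3.7), Ch.~III, which links the two (T1) sequences via $\gamma_3$, $\gamma_3'$ and the vertical maps $\frak h_*^{p-2}$, $i_*\oplus 1$, concludes that $\frak h_*^{p-2}\colon H_3^{\frak h}(X)\to H_3^{\frak g}(X)$ would have to vanish, and then refers to the first terms of the homology ladders for the contradiction, a step it leaves as ``easily seen''. You never use that square nor the contradiction setup: you read the degree-$2$ map $i_*$ directly off (T2) with $j=1$ and prove that the preceding map $\frak h_*\colon H_3^{\frak h}(X)\to H_3^{\frak h^2}(X)$ is surjective, by first pinning down $\dim H_4^{\frak h^j}(X)=1$ for all $j$ (via (T1) in top degree, the vanishing $H_5^{\rho}(X)=H_4(X^{\sigma})=0$, and Lemma \ref{kye1}) and then chasing the (T2) ladders to get each $\frak h_*\colon H_3^{\frak h^j}(X)\to H_3^{\frak h^{j+1}}(X)$ injective, hence an isomorphism since both ends of the chain are $1$-dimensional by Lemma \ref{kye1}. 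In effect your middle paragraph supplies exactly the ladder analysis the paper's proof gestures at (you show $\frak h_*^{p-2}$ is an isomorphism on $H_3$, which is precisely what contradicts its vanishing), so your version is self-contained and checkable, at the cost of being longer. One small caveat: your use of (T2) with $j=1$ requires $p\ge 3$; for $p=2$ one has $\frak g=\frak h$ over $\Z_2$ and $\frak h^2\equiv 0$, so the triangle degenerates, but then $\frak g C(X)=\frak h C(X)$, the map $i_*$ is the identity, and the statement is trivial — worth one explicit line.
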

\begin{proof}
Assume that $i_*$ is not injective. By \cite[(3.7), Ch.~III]{Bredon} we have the following exact square:
\begin{equation}\label{square}
\xymatrix{0\ar[r]& H_{3}^{\frak{g} }(X)\ar[r]^{\gamma_{3}\quad \quad} & H_{2}^{\frak{h} }(X)\oplus H_{2}(X^{\sigma })\\
0\ar[r]& H_{3}^{\frak{h} }(X)
\ar[r]^{\gamma_{3}'\qquad} \ar[u]^{\frak h_*^{p-2}}&H_{2}^{\frak{g} }(X)\oplus H_{2}(X^{\sigma }).\ar[u]_{i_*\oplus 1}}
\end{equation}
Thus $\frak h_*^{p-2}$ is the zero homomorphism, since $\dim H_3^\frak{h}(X)=\dim H_3^\frak{g}(X)=1$ by Lemma \ref{kye1}.  
It can be easily seen that this leads to a contradiction looking at the first terms in the homology ladders associated to the diagrams \cite[(3.6) and (3.7), Ch.~III]{Bredon}.
\end{proof}

\begin{lemma}\label{kye2}
$\dim \im \alpha _{2}=\dim E$, $\dim \im \alpha _{2}- \dim \im \alpha _{2}'=\dim H_{2}^{\frak{h} }(X)-\dim H_{2}^{\frak{g} }(X)$.
\end{lemma}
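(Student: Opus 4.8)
The plan is to extract both statements directly from the Smith exact sequences displayed just before the lemma, combined with the bookkeeping of dimensions established in Lemmas \ref{kye1} and \ref{kye3} and the coefficient-homomorphism observation that $a=\dim H^2(X,\Z_p)-\dim E$ where $E=\ker\mathfrak g\subset H^2(X,\Z_p)$. All homology is with $\Z_p$-coefficients, so $H_2(X)$ has dimension $22$ and, by Poincar\'e duality, $H_2(X)\cong H^2(X)$ as $\sigma^*$-modules; in particular $E$ can be viewed inside $H_2(X)$ as well.

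First I would pin down $\dim\im\alpha_2$. In the first displayed long exact sequence, $\alpha_2:H_2^{\frak h}(X)\oplus H_2(X^\sigma)\to H_2(X)$ has image equal to $\ker\beta_2$, where $\beta_2:H_2(X)\to H_2^{\frak g}(X)$. The key point is to identify $\beta_2$ (up to the relevant homology) with the map induced by $\mathfrak g$ on $H_2$, so that $\im\beta_2\cong\im\mathfrak g$ and $\ker\beta_2\cong\ker\mathfrak g=E$; this is exactly the content of the triangle (T1) for $\rho=\mathfrak g$ together with the identification $c(S(\sigma)\oplus T(\sigma))=E$ made in the text. Hence $\dim\im\alpha_2=\dim\ker\beta_2=\dim E$. (One must be a little careful: the map $\beta_2$ in Bredon's sequence is the "reduction" map and the content of (T1) is precisely that the composite $H_2(X)\xrightarrow{\beta_2}H_2^{\mathfrak g}(X)$ agrees with the Smith-algebra description of $\mathfrak g$ acting on chains; I would cite \cite[Ch.~III]{Bredon} for this rather than reprove it.)

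Next I would do the analogous analysis for the second displayed sequence to get $\dim\im\alpha_2'$. There $\alpha_2':H_2^{\frak g}(X)\oplus H_2(X^\sigma)\to H_2(X)$ has image $\ker\beta_2'$ with $\beta_2':H_2(X)\to H_2^{\frak h}(X)$, and again this $\beta_2'$ corresponds to the action of $\mathfrak h=1-\sigma^*$ on $H_2(X;\Z_p)$, whose kernel is $c(S(\sigma))$. Rather than compute $\dim\im\alpha_2'$ outright, though, the cleaner route to the second assertion is to compare the two long exact sequences term by term: from the first, $\dim\im\alpha_2 = \dim H_2^{\frak h}(X)+\dim H_2(X^\sigma)-\dim H_3^{\frak g}(X)$ (using injectivity of $\gamma_3$), and from the second, $\dim\im\alpha_2' = \dim H_2^{\frak g}(X)+\dim H_2(X^\sigma)-\dim H_3^{\frak h}(X)$ (using injectivity of $\gamma_3'$). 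Subtracting, and invoking $\dim H_3^{\frak g}(X)=\dim H_3^{\frak h}(X)$ from Lemma \ref{kye1}, the $H_2(X^\sigma)$ and $H_3$ terms cancel and we are left with $\dim\im\alpha_2-\dim\im\alpha_2'=\dim H_2^{\frak h}(X)-\dim H_2^{\frak g}(X)$, which is the second claim. Note this second part does not even need Lemma \ref{kye3} or the value of $\dim E$; it is pure dimension-counting in the two ladders.

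The main obstacle is the first assertion, specifically being confident that $\ker\beta_2$ really is $E$ and not merely something of the same dimension. The subtlety is that $H_2^{\mathfrak g}(X)$ is defined via the subcomplex $\mathfrak g C(X)$, and one needs the standard fact (Bredon, Ch.~III) that in the triangle (T1) the composite $H_*(X)\to H_*^{\mathfrak g}(X)$ is, at the chain level, essentially $\mathfrak g$ followed by inclusion, so its kernel on $H_2$ is the image of $\ker\mathfrak g\subset H_2(X;\Z_p)$ under reduction — which is exactly $E$ by the coefficient-homomorphism discussion preceding the lemma. Once that identification is granted, both statements are immediate; I would therefore spend the bulk of the written proof making that identification precise and keep the dimension arithmetic terse.
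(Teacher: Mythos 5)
Your treatment of the second assertion is correct and is essentially the paper's own argument: injectivity of $\gamma_3$ and $\gamma_3'$ (which holds because the displayed sequences start from $0$, i.e.\ because $H_3(X)=0$), a dimension count in each sequence, and $\dim H_3^{\frak g}(X)=\dim H_3^{\frak h}(X)$ from Lemma \ref{kye1}; as you note, Lemma \ref{kye3} is not needed there.

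For the first assertion, however, there is a genuine gap. You reduce everything to the claim that $\ker\beta_2$ coincides (at least in dimension) with $\ker\frak{g}_*\subset H_2(X,\Z_p)$, and you propose to dispose of this by citing Bredon, asserting it is "exactly the content of (T1)." It is not. What is formal is only one inclusion: since the endomorphism $\frak{g}_*$ of $H_2(X,\Z_p)$ factors as $H_2(X)\xrightarrow{\beta_2}H_2^{\frak g}(X)\xrightarrow{j_*}H_2(X)$ with $j$ the inclusion of the subcomplex $\frak{g}C(X)$, one gets $\im\alpha_2=\ker\beta_2\subseteq\ker\frak{g}_*$. The reverse inclusion is exactly the nontrivial content of the lemma: a class $x$ with $\frak{g}_*(x)=0$ is represented by a cycle $c$ with $\frak{g}c=\partial d$ for some chain $d$ in $C(X)$, but to conclude $\beta_2(x)=0$ one needs $\frak{g}c$ to bound \emph{inside} the subcomplex $\frak{g}C(X)$, and the Smith special homology groups are precisely not determined by the induced action on $H_*(X)$, so no general statement in Bredon gives this. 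The paper closes this gap using specific topological input: it sets $E'=\ker\frak{g}_*\subset H_2(X)$ (dual to $E$, hence of the same dimension), uses the identity $\alpha_2'(\beta_2(x)\oplus 0)=\frak{g}_*(x)$ to place $\beta_2(x)\oplus 0$ in $\im\gamma_3'$, transports it through the exact square \eqref{square} to $\im\gamma_3$, then uses that the second-factor projection of $\gamma_3$ is injective (this is where $H_3(X/\langle\sigma\rangle)=0$ enters) to get $i_*(\beta_2(x))=0$, and finally invokes Lemma \ref{kye3} to conclude $\beta_2(x)=0$. Some such argument — using the exact square, the vanishing $H_3(X/\langle\sigma\rangle)=0$, and Lemma \ref{kye3} — must be supplied; as written, your proof of $\dim\im\alpha_2=\dim E$ rests on a fact that is simply not available by citation.
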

\begin{proof}
We will denote by $E'= \ker \frak{g}_*\subset H_2(X)$. Observe that $E'$ is the dual of $E$, so that $\dim E=\dim  E'$.

We will use the following properties of Smith exact sequences: $\im\alpha_2\subset E'$  and the projection of $\gamma_{3}$ on the second factor is the boundary homomorphism in the sequence of the relative homology $H_{i}(X/\langle \sigma \rangle, X^{\sigma })$. 
The latter property implies that such projection is injective, since $H_{3}(X/\langle \sigma \rangle)=0$

If $x\in E'$ then $\alpha'_{2}(\beta_{2}(x)\oplus 0)=\frak{g}_*(x)=0$. 
Hence $\beta_2(x)\oplus 0\in \ker \alpha'_2=\im \gamma'_{3}$. By the square (\ref{square}), we get that $i_*(\beta_2(x))\oplus 0=\gamma_3(y)\in \im(\gamma_3)$.
 Since the projection of $\gamma_3$ on the second factor is injective, then $\gamma_3(y)=i_*(\beta_2(x))=0$.
 By Lemma \ref{kye3}, this implies that $x\in \ker(\beta_2)=\im(\alpha_{2})$, proving that $E'\subset \im\alpha_2$.

The two exact sequences induced by (T1) and Lemma \ref{kye1} give the second statement.  
\end{proof}

\begin{pro}\label{smith}
$\sum_i\dim H_{i}(X^{\sigma },\Z_p)=24-2a-m(p-2). $
\end{pro}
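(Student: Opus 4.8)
The plan is to extract the alternating-sum and total-dimension information from the two long exact Smith sequences induced by the triangle (T1), much as in the proof of Lemma \ref{kye2}, and then feed in the dimension count for $H_2^{\frak g}(X)$ and $H_2^{\frak h}(X)$ coming from the lattice invariants. First I would record, using Lemma \ref{kye1}, that all the Smith homology groups in degrees $0,1,3,4$ have known dimensions ($0,1,1,1$ for both $\frak g$ and $\frak h$, with $H_0^{\frak g}=H_0^{\frak h}=0$), so the only unknowns are $\dim H_2^{\frak g}(X)$ and $\dim H_2^{\frak h}(X)$. Then I would take the Euler characteristic of the exact sequence
\[
0\to H_3^{\frak g}(X)\to H_2^{\frak h}(X)\oplus H_2(X^\sigma)\to H_2(X)\to H_2^{\frak g}(X)\to H_1^{\frak h}(X)\oplus H_1(X^\sigma)\to 0
\]
together with the analogous one with $\frak g$ and $\frak h$ interchanged. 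Adding the two alternating sums and using that the terms involving $H_i(X^\sigma)$ for $i=1,2$ enter symmetrically, one obtains a relation of the form $\sum_i \dim H_i(X^\sigma)=\text{const}+\dim H_2^{\frak h}(X)+\dim H_2^{\frak g}(X)$ (after also accounting for $H_0(X^\sigma)$ and $H_4(X^\sigma)=0$, and for the copies of $H_i(X^\sigma)$ that appear in both sequences versus the one-sided count one wants); the constant comes from $\dim H_2(X)=22$ and the $0,1,1,1$ dimensions in the other degrees.

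Next I would pin down $\dim H_2^{\frak h}(X)+\dim H_2^{\frak g}(X)$ in terms of $m$ and $a$. By Lemma \ref{kye2}, $\dim\im\alpha_2=\dim E$ and $\dim\im\alpha_2-\dim\im\alpha_2'=\dim H_2^{\frak h}(X)-\dim H_2^{\frak g}(X)$; combining this with the exactness of the two sequences at $H_2(X)$ (so $\dim\im\alpha_2+\dim\im\beta_2=22$ and similarly with primes, where $\im\beta_2\hookrightarrow H_2^{\frak g}(X)$ and the cokernel maps onto $H_1^{\frak h}(X)\oplus H_1(X^\sigma)$) lets me express each of $\dim H_2^{\frak g}(X)$, $\dim H_2^{\frak h}(X)$ individually. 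Here I use $\dim E=22-a$ from the coefficient-homomorphism discussion preceding Lemma \ref{kye1} (namely $a=\dim H^2(X,\Z_p)-\dim E$ and $H^2(X,\Z_p)$ has dimension $22$), and I use $\dim H_1(X^\sigma)=\dim H_1^{\frak h}(X)+(\text{correction})$ — more precisely I would instead just track the genus contribution: $H_1(X^\sigma)=\Z_p^{2g}\oplus\Z_p^{2k}$ has total Betti number $2g+2k$ over $\Z_p$ (with $H_0(X^\sigma)$, $H_2(X^\sigma)$ of dimension $1+k+n$ each when $X^\sigma$ is as in (\ref{fixl})). Then the relation $m(p-1)=22-r$ and $\chi(X^\sigma)=24-mp$ from Lemma \ref{chi}, together with $\chi(X^\sigma)=2-2g+2k+n$, convert everything into the invariants $m,a$ and yield $\sum_i\dim H_i(X^\sigma,\Z_p)=24-2a-m(p-2)$.

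I expect the main obstacle to be the careful bookkeeping of which copies of $H_i(X^\sigma)$ ($i=1,2$) appear in which sequence and how to combine the two sequences without double-counting: the quantity I want, $\sum_i\dim H_i(X^\sigma)$, appears \emph{twice} if I naively add the two alternating sums, so I must either halve appropriately or, better, use one sequence to solve for $\dim H_2^{\frak g}(X)$ and $\dim H_2^{\frak h}(X)$ (via Lemma \ref{kye2} and Lemma \ref{kye3}, which guarantees $i_*$ is injective and hence controls the maps between the two towers) and substitute back into a single Euler-characteristic identity for one of the sequences. A secondary subtlety is the degenerate case where $X^\sigma$ is empty or a union of two elliptic curves: there $L(\sigma)=0$ and the formula should be checked directly, but since in those cases $T(\sigma)=H^2(X,\Z)$ or close to it, the values of $m,a$ make $24-2a-m(p-2)$ match the obvious Betti-number totals ($0$, respectively $2\cdot(1+0+1)+2\cdot 2=8$), so this is a quick consistency check rather than a real difficulty.
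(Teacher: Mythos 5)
Your framework (the two Smith sequences, Lemmas \ref{kye1} and \ref{kye2}, and $\dim E=22-a$) is the same as the paper's, but two of your concrete steps do not work. First, the Euler--characteristic bookkeeping you propose cannot produce the relation you claim: in each five-term sequence the terms $\dim H_2(X^{\sigma})$ and $\dim H_1(X^{\sigma})$ enter with \emph{opposite} signs, so the alternating sum gives (using Lemma \ref{kye1}) $\dim H_2^{\frak g}(X)+\dim H_2^{\frak h}(X)+\dim H_2(X^{\sigma})-\dim H_1(X^{\sigma})=24$, not $\sum_i\dim H_i(X^{\sigma})=\mathrm{const}+\dim H_2^{\frak g}(X)+\dim H_2^{\frak h}(X)$; moreover the two alternating sums are literally the same equation, so adding them yields nothing. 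Second, and more seriously, your system never separates the two towers: even using \emph{all} the exactness relations of the two sequences together with $\dim\im\alpha_2=\dim E=22-a$ (the second statement of Lemma \ref{kye2} is then redundant), the unknowns $\dim H_2^{\frak g}(X),\ \dim H_2^{\frak h}(X),\ \dim\im\alpha_2',\ \dim H_1(X^{\sigma}),\ \dim H_2(X^{\sigma})$ satisfy an underdetermined linear system, and $\dim H_1(X^{\sigma})$ remains free. The ingredient you are missing is the triangle (T2), which gives $\chi^{\frak h}(X)=(p-1)\chi^{\frak g}(X)$; combined with $\chi(X)-\chi(X^{\sigma})=\chi^{\frak g}(X)+\chi^{\frak h}(X)$ from the full (T1) sequences and Lemma \ref{chi} this yields $\chi^{\frak g}(X)=m$ and hence $\dim\im\alpha_2-\dim\im\alpha_2'=\dim H_2^{\frak h}(X)-\dim H_2^{\frak g}(X)=m(p-2)$. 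This is the only place the factor $p-2$ can come from, and it is exactly the step of the paper's proof that your outline omits.

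Your fallback of ``tracking the genus contribution'' cannot repair this: writing $\sum_i\dim H_i(X^{\sigma})=\chi(X^{\sigma})+2\dim H_1(X^{\sigma})=(24-mp)+4g$ and then ``converting into $m,a$'' would require $2g=m-a$, which is Corollary \ref{genus} and is deduced \emph{from} Proposition \ref{smith}, so that route is circular. (The Betti numbers quoted there are also wrong: $\dim H_1(X^{\sigma},\Z_p)=2g$, not $2g+2k$, and $\dim H_2(X^{\sigma},\Z_p)=1+k$, not $1+k+n$, since rational curves have no $H_1$ and isolated points contribute only to $H_0$; but this is moot since the step must be abandoned.) The correct conclusion is purely Smith-theoretic, as in the paper: $\sum_i\dim H_i(X)-\sum_i\dim H_i(X^{\sigma})=\dim\im\beta_2+\dim\im\beta_2'=2\bigl(22-\dim\im\alpha_2\bigr)+\bigl(\dim\im\alpha_2-\dim\im\alpha_2'\bigr)=2a+m(p-2)$, with no reference to $g,k,n$.
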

\begin{proof}
By (T2) we have $\chi ^{\frak{h} ^{j}}(X) =\chi ^{\frak{g}}(X)+\chi ^{\frak{h} ^{j+1}}(X)$ for any $j=1,\dots,p-1$.
Since $\frak{h} ^{p-1} =\frak{g} $, then $\chi ^{\frak{h} }(X) =(p-1)\chi ^{\frak{g} }(X)$.
Moreover, by  (T1) we get:
\[\chi (X)-\chi (X^{\sigma })= \chi ^{\frak{g} }(X)+\chi ^{\frak{h} }(X) = p\chi ^{\frak{g} }(X). \]
Thus by  Lemma \ref{chi} we have
\[
\chi ^{\frak{g} }(X)=m.
\]
On the other hand 
$\chi ^{\frak{g} }(X)-\chi ^{\frak{h} }(X) =\dim H_{2}^{\frak{g} }(X)-\dim H_{2}^{\frak{h} }(X)$ by Lemma \ref{kye1}.
Hence, by Lemma \ref{kye2}, we get
\[
\dim \im \alpha _{2}- \dim \im \alpha _{2}'=\chi ^{\frak{h} }(X)-\chi ^{\frak{g} }(X)=\chi^\frak{g}(X)(p-2)=m(p-2).
\]

Then from the two exact sequences,  Lemma \ref{kye1} and  \ref{kye2} 
we have 
\begin{align*}
 &\sum_i\dim H_{i }(X)-\sum_i\dim H_{i }(X^{\sigma}) 
 =\chi ^{\frak{g} }(X)+\chi ^{\frak{h} }(X)-2\dim H_{1}(X^{\sigma })  \\
  &=\dim \im \beta _{2}+\dim \im \beta _{2}' 
  =2(\dim H_{2}(X)-\dim \im \alpha_2)+\dim \im \alpha _{2}-\dim \im \alpha _{2}'\\
 &=2a+m(p-2).
\end{align*}
This concludes the proof since $\sum_i \dim H_i(X)=24$.
 \end{proof}

\begin{cor}\label{genus} If the fixed locus of $\sigma$ is as in (\ref{fixl}), then $2g=m-a$.
Otherwise, if it is either empty or the union of two elliptic curves, then $m=a$ and $m-a=4$ respectively.  

\end{cor}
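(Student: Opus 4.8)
The strategy is to extract the genus from the Betti number count in Proposition~\ref{smith} by combining it with the two "Euler-characteristic" computations already available: Lemma~\ref{chi} for $\chi(X^\sigma)$ and the point-count of Theorem~\ref{point}. Concretely, when $X^\sigma$ is as in \eqref{fixl}, I would write out $\sum_i \dim H_i(X^\sigma,\Z_p)$ directly from the components: each rational curve $R_i$ contributes $1+0+1=2$, each isolated point contributes $1$, and the genus-$g$ curve $C$ contributes $1+2g+1=2g+2$. Hence $\sum_i\dim H_i(X^\sigma,\Z_p)=2g+2+2k+n$. By Proposition~\ref{smith} this equals $24-2a-m(p-2)$, so $2g+2+2k+n=24-2a-m(p-2)$.

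Next I would eliminate $k$ and $n$ using Lemma~\ref{chi} together with the definition $\alpha=1-g+k$ and the relation $\chi(X^\sigma)=2\alpha+n=2-2g+2k+n$. Lemma~\ref{chi} gives $\chi(X^\sigma)=24-mp$, so $2-2g+2k+n=24-mp$, i.e. $2k+n=22-mp+2g$. Substituting this into the Betti-count equation yields $2g+2+(22-mp+2g)=24-2a-m(p-2)$, which simplifies to $4g+24-mp=24-2a-mp+2m$, hence $4g=-2a+2m$, that is, $2g=m-a$, as claimed. Note that this argument uses only the parts of Theorem~\ref{point} that are themselves consequences of the Lefschetz formulas, so there is no circularity; in fact the point-counts $n_i$ are not even needed here—only the bookkeeping identity $\chi(X^\sigma)=2\alpha+n$ and Lemma~\ref{chi}.

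For the remaining two cases I would argue as follows. If $X^\sigma=\emptyset$, then $\sum_i\dim H_i(X^\sigma,\Z_p)=0$, so Proposition~\ref{smith} forces $24-2a-m(p-2)=0$; comparing with Lemma~\ref{chi}, which now reads $24-mp=0$, i.e. $m(p-1)=24-m$... more carefully, $\chi(X^\sigma)=0$ gives $mp=24$, and then $0=24-2a-m(p-2)=mp-2a-m(p-2)=2m-2a$, so $m=a$. If $X^\sigma=E_1\cup E_2$ with $E_1,E_2$ disjoint elliptic curves, then each elliptic curve contributes $1+2+1=4$ to the $\Z_p$-Betti sum, so $\sum_i\dim H_i(X^\sigma,\Z_p)=8$; here $\chi(X^\sigma)=0$ again gives $mp=24$, and Proposition~\ref{smith} gives $8=24-2a-m(p-2)=2m-2a$, hence $m-a=4$.

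The only genuinely delicate point is making sure the input Proposition~\ref{smith} is applied with the correct identification of $X^\sigma$ inside the quotient (so that the Smith homology $H_i^\frak{g}(X)\cong H_i(X/\langle\sigma\rangle,X^\sigma)$ really sees the topological fixed locus), but that identification is already established in the setup preceding the proposition, so nothing new is required. The rest is elementary linear bookkeeping; I expect no real obstacle, and the proof should be two or three lines once the Betti contributions of the components are tabulated.
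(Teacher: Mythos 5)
Your argument is correct and is essentially the paper's own proof: both combine Proposition \ref{smith} with Lemma \ref{chi} by subtracting the Euler characteristic from the mod-$p$ Betti sum of $X^\sigma$, which isolates $2\dim H_1(X^\sigma,\Z_p)=4g$ (respectively $0$ or $8$ in the empty and two-elliptic-curve cases). Your componentwise tabulation of $\sum_i\dim H_i(X^\sigma,\Z_p)$ and the elimination of $2k+n$ via $\chi(X^\sigma)=2-2g+2k+n$ is just a more explicit rendering of that same cancellation.
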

\begin{proof} It follows from Lemma \ref{chi} and Proposition \ref{smith}, since 
$$ \sum_i\dim H_i(X^{\sigma},\Z_p)-\chi(X^{\sigma},\Z_p)=2\dim H_1(X^{\sigma},\Z_p)=4g$$ 
in case the fixed locus is as in $(\ref{fixl})$ (and equals $0$ and $8$ if it is empty or the union of two elliptic curves respectively). \end{proof}

\begin{rem}\label{obs}
By Corollary \ref{genus} the integer $m-a$ is positive and even, this is equivalent to the condition $(*)$ in Theorem \ref{introthm}.

Moreover, by Lemma \ref{chi}, Theorem \ref{point} and Corollary \ref{genus} it follows that the invariants $g,k,n$ of the fixed locus of $\sigma$ uniquely determine the invariants $r,a$ (or $m,a$) of the invariant lattice and viceversa.
 Thus, by Theorem \ref{RS}, it is equivalent to give the topology of the fixed locus of $\sigma$ or its invariant lattice in $H^2(X,\Z)$ if $p\geq 3$.
An easy computation gives the formulas for $n,k$ as functions of $r,a$ given in Theorem  \ref{introthm}.

\end{rem}

\section{Order 2} We briefly recall the classification theorem for non-symplectic involutions on $K3$ surfaces  given by Nikulin in \cite[\S 4]{N3} and \cite[\S 4]{N4}.
The local action of a non-symplectic involution $\sigma$ at a fixed point is of type
$$A_{2,0}= \left(\begin{array}{cc}
1 &0\\
0& -1
\end{array}
\right),$$
so that $X^{\sigma}$ is the disjoint union of smooth curves and there are no isolated fixed points.
The lattice $S(\sigma)$ is $2$-elementary thus, according to Theorem \ref{RS}, its isometry class is determined by the invariants $r, a$ and $\delta$.

 \begin{figure}[h]
$$\begin{array}{cccccccccccccccccccccccr}
 &\ &\ &&&& &&\ &\ &&&&&&&&&&&&& \bullet\ \ \delta=1\\
&\ &\ &&&& &&\ &\ &&&&&&&&&&&&&\ast\ \ \delta=0
\end{array}$$
\vspace{-1.1cm}

\begin{tikzpicture}[scale=.43]
\filldraw [black] 
(1,1) circle (1.5pt)  node[below=-0.55cm]{10}
(2,0) node[below=-0.20cm]{*} 
(2,2) circle (1.5pt) node[below=-0.5cm]{9} node[below=-0.15cm]{*}
 (3,1) circle (1.5pt)
 (3,3) circle (1.5pt)node[below=-0.5cm]{8}
 (4,2) circle (1.5pt)
(4,4) circle (1.5pt)node[below=-0.5cm]{7}
(5,3) circle (1.5pt)
(5,5) circle (1.5pt)node[below=-0.5cm]{6}
(6,4) circle (1.5pt)node[below=-0.15cm]{*}
(6,2)    node[below=-0.23cm]{*}
(6,6) circle (1.5pt)node[below=-0.5cm]{5}
(7,3) circle (1.5pt)
(7,5) circle (1.5pt)
(7,7) circle (1.5pt)node[below=-0.5cm]{4}
(8,2) circle (1.5pt)
(8,4) circle (1.5pt)
(8,6) circle (1.5pt)
(8,8) circle (1.5pt)node[below=-0.5cm]{3}
(9,1) circle (1.5pt)
(9,3) circle (1.5pt)
(9,5) circle (1.5pt)
(9,7) circle (1.5pt)
(9,9) circle (1.5pt)node[below=-0.5cm]{2}
(10,0)  node[below=-0.20cm]{*}
(10,2) circle (1.5pt)node[below=-0.15cm]{*}
(10,4) circle (1.5pt)node[below=-0.15cm]{*}
(10,6) circle (1.5pt)node[below=-0.15cm]{*}
(10,8) circle (1.5pt)node[below=-0.15cm]{*}
(10,10) circle (1.5pt) node[below=-0.5cm]{1}node[below=-0.15cm]{*}
(11,1) circle (1.5pt)
(11,3) circle (1.5pt)
(11,5) circle (1.5pt)
(11,7) circle (1.5pt)
(11,9) circle (1.5pt)
(11,11) circle (1.5pt) node[below=-0.5cm]{0}
(12,2) circle (1.5pt)
(12,4) circle (1.5pt)
(12,6) circle (1.5pt)
(12,8) circle (1.5pt)
(12,10) circle (1.5pt) node[below=-0.5cm]{1}
(13,3) circle (1.5pt)
(13,5) circle (1.5pt)
(13,7) circle (1.5pt)
(13,9) circle (1.5pt) node[below=-0.5cm]{2}
(14,2)  node[below=-0.15cm]{*}
(14,4) circle (1.5pt)node[below=-0.15cm]{*}
(14,6) circle (1.5pt)node[below=-0.15cm]{*}
(14,8) circle (1.5pt) node[below=-0.5cm]{3}
(15,3) circle (1.5pt)
(15,5) circle (1.5pt)
(15,7) circle (1.5pt) node[below=-0.5cm]{4}
(16,2) circle (1.5pt)
(16,4) circle (1.5pt)
(16,6) circle (1.5pt) node[below=-0.5cm]{5}
(17,1) circle (1.5pt)
(17,3) circle (1.5pt)
(17,5) circle (1.5pt)node[below=-0.5cm]{6}
(18,0)  node[below=-0.2cm]{*}
(18,2) circle (1.5pt)node[below=-0.15cm]{*}
(18,4) circle (1.5pt) node[below=-0.5cm]{7}node[below=-0.15cm]{*}
(19,1) circle (1.5pt)
(19,3) circle (1.5pt) node[below=-0.5cm]{8}
(20,2) circle (1.5pt) node[below=-0.5cm]{9}
 ; 
\draw plot[mark=*] file {data/ScatterPlotExampleData.data};
\draw[->] (0,0) -- coordinate (x axis mid) (22,0);
    \draw[->] (0,0) -- coordinate (y axis mid)(0,12);
    \foreach \x in {0,1,2,3,4,5,6,7,8,9,10,11,12,13,14,15,16,17,18,19,20}
        \draw [xshift=0cm](\x cm,0pt) -- (\x cm,-3pt)
         node[anchor=north] {$\x$};
          \foreach \y in {1,2,3,4,5,6,7,8,9,10,11}
        \draw (1pt,\y cm) -- (-3pt,\y cm) node[anchor=east] {$\y$};
    \node[below=0.2cm, right=4.5cm] at (x axis mid) {$r$};
    \node[left=0.5cm, below=-2.7cm, rotate=90] at (y axis mid) {$a$};
 \draw[<-, blue](0.1,0.1)-- node[below=2cm,left=2cm]{$g$} (11,11);   
 \draw[<-, red](21.5,0.5)-- node[below=2cm,right=2.1cm]{$k$} (11,11);
  \end{tikzpicture} 
\caption{Order 2}\label{ord2}
\end{figure}
\begin{thm}[Theorem 4.2.2, \cite{N3}] \label{o2}
The fixed locus of a non-symplectic involution on a $K3$ surface is
\begin{itemize}
\item empty if $r=10$, $a=10$ and $\delta=0$,
\item the disjoint union of two elliptic curves if  $r=10$, $a=8$ and $\delta=0$,
\item the disjoint union of a curve of genus $g$ and $k$ rational curves otherwise, where
$$g=(22-r-a)/2,\quad k=(r-a)/2.$$
\end{itemize}
\end{thm}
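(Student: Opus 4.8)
The plan is to recover $X^{\sigma}$ from the invariant lattice $S(\sigma)$ using the Lefschetz and Smith computations of Section~2, and then to single out the two sporadic lattices where the generic formulas break down. For an order-$2$ action the only local model at a fixed point is $A_{2,0}=\mathrm{diag}(1,-1)$, which is one-dimensional, so there are no isolated fixed points; by Lemma~\ref{fix} the fixed locus is then empty, a disjoint union $E_1\sqcup E_2$ of two elliptic curves, or of the form $C\cup R_1\cup\dots\cup R_k$ with $C$ smooth of genus $g$. In the last case I would solve the three relations already available: $\chi(X^{\sigma})=(2-2g)+2k$, while Lemma~\ref{chi} gives $\chi(X^{\sigma})=24-2m=2r-20$ with $m=22-r$, Theorem~\ref{point} gives $1-g+k=\alpha=r-10$, and Corollary~\ref{genus} gives $2g=m-a=22-r-a$. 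These force $g=(22-r-a)/2$ and $k=(r-a)/2$, and in particular $g\ge 0$, i.e. $r+a\le 22$, which is the case $p=2$ of condition $(*)$ in Theorem~\ref{introthm}. Here $S(\sigma)$ is automatically hyperbolic (it contains an ample class, by the Hodge index theorem) and $2$-elementary (Theorem~\ref{ns}), hence by Theorem~\ref{RS} classified by $(r,a,\delta)$; note that $\delta$ does not enter these formulas.

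It remains to treat the cases $X^{\sigma}=\emptyset$ and $X^{\sigma}=E_1\sqcup E_2$, in both of which $\chi(X^{\sigma})=0$ forces $r=10$ by Lemma~\ref{chi}. To finish I would pin down $S(\sigma)$ geometrically. If $X^{\sigma}=\emptyset$, then $\sigma$ is a fixed-point-free involution, so $Y=X/\sigma$ is an Enriques surface and $S(\sigma)$ is the pullback of $\mathrm{Num}(Y)$; thus $S(\sigma)\cong U(2)\oplus E_8(2)$, i.e. $(r,a,\delta)=(10,10,0)$. If $X^{\sigma}=E_1\sqcup E_2$, then, as in the proof of Lemma~\ref{fix}, $[E_1]$ defines a $\sigma$-invariant elliptic fibration $X\to\PP^1$ with $E_1,E_2$ as fixed fibres; a short analysis of this fibration and of the quotient surface $X/\sigma$ shows $S(\sigma)\cong U\oplus E_8(2)$, i.e. $(r,a,\delta)=(10,8,0)$. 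Conversely — needed for the sharpness of the statement and for Theorem~\ref{introthm} — one checks that every admissible $(r,a,\delta)$ is realized: embed the corresponding even hyperbolic $2$-elementary lattice $S$ primitively into $L_{K3}$ (possible since $r+a\le 22$ and the signatures match), take the involution $\rho=\mathrm{id}_S\oplus(-\mathrm{id}_{S^{\perp}})$ of $L_{K3}$, choose a period $\omega\in S^{\perp}\otimes\C$ with $S_X=S$, and produce $\sigma$ with $\sigma^{*}=\rho$ by the strong Torelli theorem; the two sporadic lattices then occur, respectively, for the generic K3 cover of an Enriques surface and for a generic elliptic K3 carrying an involution acting by $-1$ on the base.

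The step I expect to be the main obstacle is the identification of the two sporadic cases: one must show that for $r=10$, $\delta=0$ the fixed locus genuinely degenerates — to $\emptyset$, resp. to two elliptic curves — rather than being the single genus-$1$ curve, resp. the genus-$2$ curve plus one rational curve, that the generic formulas would output. This is exactly the point at which the invariant $\delta$ enters the classification, via Nikulin's description of the reduction of $[X^{\sigma}]$ in $H^2(X,\Z_2)$ (which vanishes precisely when $\delta=0$); everything else is bookkeeping with the identities of Section~2 and with primitive lattice embeddings.
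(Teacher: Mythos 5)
Your derivation of the generic case is fine and is essentially how the paper itself obtains the $p=2$ entries of Theorem \ref{introthm} (the paper does not prove Theorem \ref{o2} internally; it quotes Nikulin): the unique local model $A_{2,0}$ excludes isolated points, and Lemma \ref{chi}, Theorem \ref{point} and Corollary \ref{genus} force $g=(22-r-a)/2$, $k=(r-a)/2$. One small caveat: the Smith-theoretic input (Proposition \ref{smith}, hence Corollary \ref{genus}) presupposes $X^{\sigma}\neq\emptyset$, since Lemma \ref{kye1} rests on $H_0(X/\langle\sigma\rangle,X^{\sigma})=0$; this is harmless where you invoke it, but it means the lattice-theoretic bookkeeping says nothing at all about the empty case.

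The genuine gap is in the two sporadic cases, where you only establish the implications in the direction not needed for the statement. From $X^{\sigma}=\emptyset$ you correctly get the Enriques quotient and $S(\sigma)\cong U(2)\oplus E_8(2)$, i.e.\ $(r,a,\delta)=(10,10,0)$, and (via an asserted but not supplied ``short analysis'') from $X^{\sigma}=E_1\sqcup E_2$ you get $(10,8,0)$. The theorem, however, asserts the converses: that $(10,10,0)$ forces $X^{\sigma}=\emptyset$ rather than a single elliptic curve, and that $(10,8,0)$ forces two elliptic curves rather than a genus-$2$ curve plus one rational curve. The numerical machinery of Section 2 cannot decide this: at $(r,a)=(10,10)$ and $(10,8)$ both the degenerate and non-degenerate configurations have $\chi(X^{\sigma})=0$, and at $(10,8)$ even the total $\Z_2$-Betti number of the fixed locus equals $8$ in both configurations, so Lemma \ref{chi}, Theorem \ref{point} and Proposition \ref{smith} are blind to the difference (and indeed the lattices with $\delta=1$ at these $(r,a)$ occur and carry the non-degenerate fixed locus). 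Distinguishing the two possibilities is precisely the role of $\delta$, and your appeal to ``Nikulin's description of the reduction of $[X^{\sigma}]$ in $H^2(X,\Z_2)$'' is an appeal to the content of Nikulin's Theorem 4.2.2 itself, i.e.\ to the statement being proved. To close the argument you would have to prove that relation directly — for instance, analyze the branch class $B\equiv-2K_Y$ on the rational quotient $Y=X/\sigma$ and show that $\delta=0$ (integrality of the discriminant form, equivalently $2$-divisibility of the fixed class in $S(\sigma)^{*}$) is incompatible with the configurations $g=1,k=0$ at $(10,10)$ and $g=2,k=1$ at $(10,8)$ — none of which follows from the bookkeeping you describe; the same remark applies to the unproved claim that $X^{\sigma}=E_1\sqcup E_2$ yields $S(\sigma)\cong U\oplus E_8(2)$.
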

Figure \ref{ord2} shows all the values of the triple $(r,a,\delta)$ which are realized and the corresponding invariants $(g,k)$ of the fixed locus.

The surfaces which arise as quotients of $K3$ surfaces by non-symplectic involutions have been classified in \cite{N4, AN, Zhang}. These are Enriques surfaces if $X^{\sigma}=\emptyset$ and smooth rational surfaces otherwise.

\section{Order 3}
Non-symplectic automorphisms of order $3$ on K3 surfaces have been recently classified in \cite{AS} and \cite{Taki}.
In this case the local action at a fixed point is of one of the following  
 $$ A_{3,1}=\left(\begin{array}{cc}
\zeta_3^2&0\\
0&\zeta_3^2
\end{array}
\right),\quad A_{3,0}= \left(\begin{array}{cc}
\zeta_3&0\\
0&1
\end{array}
\right)
,$$
so that the fixed locus contains both smooth curves and isolated points. Let $(r,a)$ be the invariants of $S(\sigma)$, and  let $\rk T(\sigma)=2m$ as in section \S3.
\begin{thm}[Table 1,\,\cite{AS} and Theorem 1.2,\,\cite{Taki}] \label{o3}
The fixed locus of a non-symplectic automorphism of order $3$ on a $K3$ surface is not empty and it is either:
\begin{itemize}
\item the union of three isolated points if $m=a=7$, or 
\item the disjoint union of $n$ points, a smooth curve of genus $g$ and $k$ smooth rational curves, where 
$$n = 10-m,\quad g = (m-a)/2,\quad k = 6-(m+a)/2.$$
\end{itemize}
\end{thm}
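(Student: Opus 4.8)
The plan is to specialize the results of Section~2 to $p=3$, where $\rk T(\sigma)=2m$ with $m=(22-r)/2$, and then to read off the three invariants $n$, $g$, $k$ of $X^{\sigma}$; no further geometric input is needed beyond Lemmas~\ref{fix} and~\ref{chi}, Theorem~\ref{point} and Corollary~\ref{genus}, together with the convention that $k=-1$ signals the absence of a fixed curve.

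I would start by excluding the degenerate cases of Lemma~\ref{fix}. By formula~(\ref{l1}) the holomorphic Lefschetz number of $\sigma$ equals $L(\sigma)=1+\zeta_3^2=-\zeta_3\neq 0$, whereas $L(\sigma)=0$ whenever $X^{\sigma}$ is empty or the disjoint union of two elliptic curves; hence $X^{\sigma}$ is nonempty and of the form~(\ref{fixl}). (Nonemptiness also follows from the fact that an \'etale degree-$3$ quotient of a $K3$ surface would have $\chi(\mathcal{O})=2/3$, which is impossible, but the Lefschetz value is already available from the proof of Theorem~\ref{point}.) Now I invoke Theorem~\ref{point}: for $p=3$ the only admissible local action at an isolated fixed point is $A_{3,1}$, so the number of isolated points is $n=n_1=\alpha+3$ with $\alpha=(r-8)/2$, which becomes $n=10-m$ after substituting $r=22-2m$. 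Corollary~\ref{genus} gives $2g=m-a$, i.e.\ $g=(m-a)/2$ (in particular $a\le m$, consistently with Theorem~\ref{ns}). Finally, comparing the two expressions for the topological Euler characteristic, $24-3m=\chi(X^{\sigma})=(2-2g)+2k+n$ from Lemma~\ref{chi}, and inserting the values of $g$ and $n$, yields $2k=12-m-a$, that is $k=6-(m+a)/2$.

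It remains to translate these formulas into the dichotomy of the statement. If $k=6-(m+a)/2\ge 0$, one is in the second alternative, $X^{\sigma}=C\cup R_1\cup\cdots\cup R_k\cup\{p_1,\dots,p_n\}$ with $g=(m-a)/2$ and $n=10-m$. If instead $k=-1$, then $X^{\sigma}$ carries no fixed curve, so $g=0$, which forces $m=a$; together with $k=-1$, i.e.\ $m+a=14$, this gives $m=a=7$ and $n=3$, the three isolated points. Conversely $m=a=7$ yields $g=0$, $k=-1$, $n=3$. Hence the first alternative holds exactly when $m=a=7$ and the second otherwise. I expect the only genuinely delicate point to be this bookkeeping at the boundary value $k=-1$: one must check that the convention is compatible with Corollary~\ref{genus} (which then reads $0=m-a$) and with the holomorphic Lefschetz formula entering the proof of Theorem~\ref{point} (where the contributions $b(g)$ and $k\,b(0)$ cancel for $g=0$, $k=-1$). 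Everything else is routine substitution.
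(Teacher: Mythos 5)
Your derivation is correct, and it is exactly the machinery this paper itself uses: for the statement labeled Theorem \ref{o3} the authors simply quote the classification from \cite{AS} and \cite{Taki} without proof, but the route you follow (Lemma \ref{chi}, Theorem \ref{point}, Corollary \ref{genus}, plus the Lefschetz-number exclusion of the empty and two-elliptic-curve cases) is precisely how they prove the analogous statements for $p=5,7,11,13,17,19$ and how Remark \ref{obs} says the formulas in Theorem \ref{introthm} are obtained; specializing to $p=3$ gives $n=10-m$, $g=(m-a)/2$, $k=6-(m+a)/2$ as you compute. The only point needing care is the boundary case with no fixed curve, and your treatment is adequate: rather than formally plugging $g=0$, $k=-1$ into formulas derived under (\ref{fixl}), one can note that Proposition \ref{smith} applied to a purely zero-dimensional fixed locus gives $m=a$ directly and the holomorphic Lefschetz formula gives $n=3$, whence $m=a=7$ from $\chi(X^{\sigma})=24-3m$; your cancellation argument ($b(0)+k\,b(0)=0$ for $k=-1$) amounts to the same verification, so the bookkeeping is sound.
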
 
All values of the pair $(m,a)$ and the corresponding invariants $(g,k,n)$ of the fixed locus
are represented in Figure \ref{ord3}.

\begin{figure}[h]
\begin{tikzpicture}[scale=.5]
\filldraw [black] 
(1,1) circle (1.5pt) node[below=-0.5cm]{5}
(2,2) circle (1.5pt) node[below=-0.5cm]{4}
(2,0) circle (1.5pt) 
(3,1) circle (1.5pt)
(3,3) circle (1.5pt) node[below=-0.5cm]{3}
(4,4) circle (1.5pt) node[below=-0.5cm]{2}
(4,2) circle (1.5pt)
(5,5) circle (1.5pt) node[below=-0.5cm]{1}
(5,3) circle (1.5pt)
(5,1) circle (1.5pt)
(6,6) circle (1.5pt) node[left=0.15cm, below=-0.4cm]{0} node[right=0.24cm,below=-0.24cm]{}
(6,4) circle (1.5pt)
(6,2) circle (1.5pt)
(6,0) circle (1.5pt)
(7,7) circle (1.5pt)node[below=-0.5cm]{}
(7,5) circle (1.5pt)node[ below=-0.5cm]{1}
(7,3) circle (1.5pt)
(7,1) circle (1.5pt)
(8,4) circle (1.5pt)node[ below=-0.5cm]{2}
(8,2) circle (1.5pt)
(9,3) circle (1.5pt)node[ below=-0.5cm]{3}
(9,1) circle (1.5pt)
(10,2) circle (1.5pt)node[ below=-0.5cm]{4}
(10,0) circle (1.5pt) node[ below=-0.5cm, green]{0}
 (9,0) circle (0pt) node[ below=-0.5cm, green]{}
(8,0) circle (0pt) node[ below=-0.5cm, green]{2}
(7,0) circle (0pt) node[ below=-0.5cm, green]{}
(6,0) circle (0pt) node[ below=-0.5cm, green]{4}
(5,0) circle (0pt) node[ below=-0.5cm, green]{}
(4,0) circle (0pt) node[ below=-0.5cm, green]{6}
(3,0) circle (0pt) node[ below=-0.5cm, green]{}
(2,0) circle (0pt) node[ below=-0.5cm, green]{8}
(1,0) circle (0pt) node[ below=-0.5cm, green]{}
; 
\draw plot[mark=*] file {data/ScatterPlotExampleData.data};
\draw[->] (0,0) -- coordinate (x axis mid) (12,0);
    \draw[->] (0,0) -- coordinate (y axis mid)(0,8);
    \foreach \x in {0,1,2,3,4,5,6,7,8,9,10}
        \draw [xshift=0cm](\x cm,1pt) -- (\x cm,-3pt)
         node[anchor=north] {$\x$};
          \foreach \y in {1,2,3,4,5,6,7}
        \draw (1pt,\y cm) -- (-3pt,\y cm) node[anchor=east] {$\y$};
    \node[below=0.2cm, right=3cm] at (x axis mid) {$m$};
    \node[below=-0.2cm, right=3cm, green] at (x axis mid) {$n$};
    \node[left=0.5cm, below=-2cm, rotate=90] at (y axis mid) {$a$};
 \draw[<-, blue](0.5,0.5)-- node[below=1.4cm,left=1.5cm]{$k$} (7,7);   
 \draw[<-, red](11.5,0.5)-- node[below=1.2cm,right=1.3cm]{$g$} (6,6);
  \end{tikzpicture} 
\caption{Order 3}\label{ord3}
\end{figure}
\begin{rem}
We give here a construction relating $K3$ surfaces with $n=3$, $k=0$, $g=1$ to those with $n=3$ and no fixed curves (cf. \cite[Proposition 4.7]{AS}). 
  Consider the elliptic $K3$ surface $X_{a,b}$  defined by the Weierstrass equation
$$
y^2=x^3+(t^6+a_1t^3+a_2)x+(t^{12}+b_1t^9+b_2t^6+b_3t^3+b_4)
,$$
where $a\in \C^2$, $b\in \C^4$ are generic.
The fibration has $24$ fibers of Kodaira type $I_1$ over the zeros of its discriminant polynomial. The automorphism of order $3$
$$
\sigma(x,y,t)=(x,y,\zeta_3 t)
$$  
acts non trivially on the basis of the fibration and preserves the smooth elliptic curves over $t=0$ and $t=\infty$. Moreover, the fiber over $t=0$ is pointwise fixed. This implies that $\sigma$ is non-symplectic and by Theorem \ref{o3} we have $n=3$, $k=0$, $g=1$. 
The minimal resolution of the quotient surface   $X_{a,b}/\langle\sigma\rangle$ is a rational elliptic surface $\pi_{a,b}:Y_{a,b}\ra \PP^1$ with Weierstrass equation
$$
y^2=x^3+(t^2+a_1t+a_2)x+(t^{4}+b_1t^3+b_2t^2+b_3t+b_4)
.$$ 
This fibration has one singular fiber of Kodaira type $IV$ over $t=\infty$ and $8$ fibers of type $I_1$. 
Let $p:P\ra \PP^1$ be a non-trivial principal homogeneous space of $\pi_{a,b}$ given by an order $3$ element in the fiber $\pi_{a,b}^{-1}(0)$ (see \cite[Ch.V, \S 4]{CD}). Then surface $P$ is a rational elliptic surface with a multiple smooth elliptic fiber of multiplicity $3$ over $0$. Let $Z$ be the surface obtained by blowing up the intersection point of the three rational curves of the fiber $p^{-1}(\infty)$ and then blowing down the strict transforms of the three rational curves. Then $Z$ is a log Enriques surface of index $3$ with three singular points. Let $X$ be the canonical cover of $Z$ and $\bar\sigma$ be a generator of the covering transformation group of the cover. By \cite[Theorem 5.1]{Zhang2},  since $Z$ has three singular points of type $(3,1)$ (these are Hirzebruch-Jung singularities, cf. \cite[Section 5]{BPV}) then $X$ is a $K3$ surface and $\bar\sigma$ is an order 3 non-symplectic automorphism of $X$ with only three fixed points.

A similar construction relates Examples \ref{fibr1}, \ref{7fam1}, \ref{11fam1} to Examples \ref{log5}, \ref{7fam2}, \ref{11fam2} respectively.  
\end{rem}
\section{Order 5}
An order five non-symplectic automorphism of a $K3$ surface has three possible local actions at a fixed point
$$ A_{5,0}=\left(\begin{array}{cc}
\zeta_5 &0\\
0& 1
\end{array}
\right),\ \ A_{5,2}=\left(\begin{array}{cc}
\zeta_5^3 &0\\
0& \zeta_5^3
\end{array}
\right)
,\ \   A_{5,1}=\left(\begin{array}{cc}
\zeta_5^2 &0\\
0& \zeta_5^4
\end{array}
\right) .$$
Thus the fixed locus can contain both fixed curves and isolated points of two different types.
We start providing two families of examples.
 \begin{ex}\label{fam1}
Let $\mathcal A$ be the family of plane sextic curves defined by  
 $$x_0(x_0-x_1)\prod_{i=1}^4(x_0-\lambda_ix_1)+x_2^5x_1=0,$$
where $\lambda_i\in \C$.
Observe that the projective transformation 
\begin{equation}\label{aut1}\bar\sigma(x_{0}, x_{1}, x_{2})=(x_{0}, x_{1}, \zeta_5 x_{2})\end{equation} 
preserves any
curve in $\mathcal A$. If $C\in\mathcal A $ is smooth, then the double cover $X$ of $\mathbb{P}^{2}$ branched along  $C$ is a $K3$ surface and $\bar\sigma$ induces an automorphism $\sigma$ of order $5$ on $X$.
Since $X^{\bar\sigma}=\{(0,0,1) \}\cup \{x_{2}=0 \}$, then $X^{\sigma}$
is the union of an isolated fixed point  and a smooth curve of genus two. This implies that $\sigma$ is non-symplectic (since symplectic automorphisms only have isolated fixed points)\\
If the complex numbers $\lambda_i$'s are distinct, then the corresponding sextic $C\in \mathcal A$  is smooth and has six fixed points on the line $x_2=0$.
Otherwise, if two or three of the $\lambda_i$'s coincide, then $C$ has a singular point of type $A_4$ or $E_8$ respectively. \\
More in detail, we have the following cases: 
$$\begin{array}{lcc}
&\mbox{Equation of $C$:} & \mbox{Types of singular points:}\\[10pt]
 a) &  x_0^2(x_0-x_1)\prod_{i=2}^4(x_0-\lambda_ix_1)+x_2^5x_1=0 & A_4 \\[4pt]

b) &x_0^3(x_0-x_1) (x_0-\lambda_3x_1)(x_0-\lambda_4x_1)+x_2^5x_1=0 & E_8 \\[4pt]

c) &x_0^2(x_0-x_1)^2 (x_0-\lambda_3x_1)(x_0-\lambda_4x_1)+x_2^5x_1=0 & A_4^2 \\[4pt]

d) & x_0^3(x_0-x_1)^2(x_0-\lambda_4x_1)+x_2^5x_1=0 & A_4\oplus E_8\\ [4pt]

e) & x_0^3(x_0-x_1)^3+x_2^5x_1=0  & E_8^2.\\ [4pt]
\end{array}
$$
\end{ex}
 
\begin{ex}\label{fam2} 
Let $\mathcal B$ be the family of plane sextic curves defined by
$$a_1x_0^6+a_2x_0^3x_1x_2^2+a_3x_0^2x_1^3x_2+x_0(a_4x_1^5+a_5x_2^5)+a_6x_1^2x_2^4=0,$$
where $a\in \C^6$. 
The projective transformation
\begin{equation}\label{aut2}\bar \sigma(x_0,x_1,x_2)=(x_0,\zeta_5x_1,\zeta_5^2x_2)\end{equation}
preserves any curve in $\mathcal B$. If $C\in\mathcal B$ is smooth, then the double cover $X$ of $\mathbb{P}^{2}$ branched along  $C$ is a $K3$ surface and $\bar\sigma$ induces a non-symplectic automorphism $\sigma$ of order $5$ on $X$. In fact $\sigma$ does not leave invariant the holomorphic 2-form (written in local coordinates):
$$
\frac{dx\wedge dy}{\sqrt{f}},
$$
where $f$ denotes the equation of a curve in $\mathcal B$  in local coordinates $x,y$. Since $X^{\bar\sigma}=\{(1,0,0) \}\cup\{(0,1,0) \}\cup\{(0,0,1) \}$ and all but one of these points belong to $C$,  then $X^{\sigma}$ is the union of $4$ isolated points.
 \end{ex}

\begin{table}

\begin{tabular}{c|c|c|c|c|c}
$n_1$ & $n_2$ & $g$ & $k$&  $T(\sigma)$ & $S(\sigma)$\\
\hline
 1 & 0 & 2 &0 &$H_5\oplus U\oplus E_8\oplus E_8$& $H_5$\\ 
 \hline
 3& 1& 1& 0  &$H_5\oplus U\oplus E_8\oplus A_4$& $H_5\oplus A_4$\\ 
 3& 1& - & - &$H_5\oplus U(5)\oplus E_8\oplus A_4$& $H_5\oplus A_4^*(5)$\\
\hline 
 5& 2& 1 & 1  &$U\oplus H_5\oplus E_8$ & $H_5\oplus E_8$\\
 5& 2& 0&  0  &$U\oplus H_5\oplus A_4^2$ & $ H_5\oplus A_4^2$\\
\hline 
 7& 3& 0 & 1&  $U\oplus H_5\oplus A_4$& $H_5\oplus A_4\oplus E_8$\\ 
  \hline
 9& 4 &0 & 2   &$U\oplus H_5 $& $ H_5\oplus E_8\oplus E_8$\\ 
  \end{tabular}\\

 \ \\
\ \\
  \caption{Order 5}\label{ord5}

\end{table}

\begin{thm}\label{o5}  The fixed locus of  a non-symplectic automorphism $\sigma$ of order $5$  on a $K3$ surface is either
\begin{itemize}
\item the union of three isolated points of type $1$ and one point of type $2$ if $S(\sigma)$ is isomorphic to $H_5\oplus A_4^*(5)$, or
\item the disjoint union of $n_i$ isolated points of type $i$, a smooth curve of genus $g$ and $k$ smooth rational curves where $(g,k,n_i)$ appears in a row of Table \ref{ord5}.
\end{itemize}
 The same table gives the corresponding invariant lattice $S(\sigma)$ and its orthogonal complement $T(\sigma)$.
\end{thm}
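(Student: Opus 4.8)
The plan is to combine the lattice-theoretic constraints already established in Section 2 with Nikuln's gluing/existence criterion to enumerate all admissible invariant lattices, and then to read off the fixed-locus data from Theorem \ref{point} and Corollary \ref{genus}. First I would invoke Theorem \ref{ns}: the invariant lattice $S(\sigma)$ is a hyperbolic $5$-elementary lattice, say of rank $r$ and discriminant $5^a$, with $a\leq m = (22-r)/4$. By Corollary \ref{genus} the fixed locus falls into exactly two cases: either it is as in (\ref{fixl}) with $2g = m-a$, or $m=a$ (the empty and two-elliptic-curves cases cannot occur for $p=5$ since $U(2)\oplus E_8(2)$ and $U\oplus E_8(2)$ are $2$-elementary, not $5$-elementary, so by Lemma \ref{fix} the two-elliptic-curves case requires $m-a=4$, which I must check is incompatible with $5$-elementarity — in fact $m-a=4$ with $m=a+4$ would force particular values of $r$ that need to be ruled out). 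So the real content is: list the pairs $(r,a)$ with $r$ even (Theorem \ref{RS}), $0\leq a\leq m$, $m-a\geq 0$ even, and satisfying the arithmetic congruences of Theorem \ref{RS} for $p=5$; then for each such pair exhibit a $K3$ surface realizing it.

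Next I would carry out this enumeration explicitly. Since $\rk T(\sigma) = 4m$ is at most $22$, we get $m\leq 5$, hence $r\in\{2,6,10,14,18,22\}$ but $r=22$ forces $m=0$ so $X^\sigma$ would be ruled out; thus $m\in\{1,2,3,4,5\}$. For each $m$, the possible $a$ range over even-gap values $a\equiv m\pmod 2$ with $a\leq m$ and $a\geq 0$, further constrained by the $p\equiv(-1)^{r/2-1}\pmod 4$ condition when $a$ is odd (here $p=5\equiv 1$, so odd $a$ requires $r\equiv 2\pmod 4$) and by $r>a>0$ when $r\not\equiv 2\pmod 8$. Running through these I expect to recover exactly the seven lines of Table \ref{ord5} plus the exceptional lattice $H_5\oplus A_4^*(5)$ (which has $m=a=3$, the $g$-undefined case). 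The identification of the lattice itself, once $(r,a)$ is fixed, is immediate from Theorem \ref{RS}: an indefinite $5$-elementary lattice is determined by $r$ and $a$, so it suffices to verify that each listed lattice $H_5\oplus(\cdots)$ genuinely has the claimed invariants, and that $T(\sigma)$ is its orthogonal complement in $L_{K3}$ (using the fact, recalled in Section 1, that the orthogonal complement of a $p$-elementary primitive sublattice of a unimodular lattice is again $p$-elementary with the same discriminant). The values $n_1,n_2,g,k$ then follow by substituting $r$ (equivalently $\alpha = (r-6)/4 = 1-g+k$) into Theorem \ref{point} for $p=5$ and using $2g=m-a$, $n = 3\alpha+4$, $k = \alpha-1+g$.

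The main obstacle is existence, i.e. showing each of the seven (plus one) abstract pairs is actually realized by a geometric $(X,\sigma)$. The abstract lattice side only tells us the invariant lattice can be primitively embedded in $L_{K3}$; one must promote this to an actual non-symplectic automorphism. For the cases with a fixed curve of positive genus or with few isolated points, I would use the double-cover-of-$\mathbb{P}^2$ construction of Example \ref{fam1} (degenerating the sextic to introduce $A_4$ and $E_8$ singularities, whose resolutions contribute $A_4$ and $E_8$ summands to $S_X$, hence to $S(\sigma)$), and for the purely-isolated-point cases the family $\mathcal B$ of Example \ref{fam2}; in each case one computes $X^\sigma$ directly from the branch locus and reads off $(g,k,n_1,n_2)$, then matches against the table. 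The exceptional lattice $H_5\oplus A_4^*(5)$ corresponds to case (c) or (d)/(e) of Example \ref{fam1} — here the resolution contributes $A_4^*(5)$-type rather than $A_4$-type to the transcendental side — and I would check that the sextic with two $A_4$ points (or the relevant degeneration) yields exactly three points of type $1$ and one of type $2$ and no fixed curve, consistent with $m=a$. The only genuinely delicate point is verifying that in the degenerate cases the Picard lattice of the resolved $K3$ is precisely the claimed $S(\sigma)$ and not something larger for special parameter values; this is handled by a dimension/genericity count, which I would defer to the per-family analysis.
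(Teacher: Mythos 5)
Your overall strategy coincides with the paper's: the lattices in Table \ref{ord5} are obtained from Theorem \ref{RS} together with the constraints of Theorem \ref{ns} and Corollary \ref{genus}, the invariants $(g,k,n_1,n_2)$ are then read off from Lemma \ref{chi}, Theorem \ref{point} and Corollary \ref{genus}, and realization is provided by the two families of Examples \ref{fam1} and \ref{fam2} (the paper carries out this last step immediately after the proof, by showing that any $(X,\sigma)$ with $S_X=S(\sigma)$ lies in one of these families).

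One step of your plan, however, would fail as written: the exclusion of the degenerate configurations (empty fixed locus, two disjoint elliptic curves). Arguing that these cannot occur for $p=5$ ``since $U(2)\oplus E_8(2)$ and $U\oplus E_8(2)$ are $2$-elementary'' is circular --- that these lattices are the only ones producing such fixed loci is part of the classification being proved, not an input. And your fallback, checking that $m-a=4$ is incompatible with $5$-elementarity, is simply false: $m-a=4$ does occur for $p=5$, namely for $S(\sigma)\cong H_5$ ($r=2$, $a=1$, $m=5$), which is exactly the first row of Table \ref{ord5}, where $2g=m-a$ gives the genus-$2$ curve. The invariants $(m,a)$ alone cannot distinguish ``fixed curve of genus $2$'' from ``two elliptic curves''; the correct exclusion uses the Lefschetz formulas already in your toolkit: by Lemma \ref{chi}, $\chi(X^{\sigma})=24-5m\neq 0$, whereas both degenerate configurations have $\chi(X^{\sigma})=0$ (equivalently, $L(\sigma)=1+\zeta_5^{4}\neq 0$ while $L(\sigma)=0$ in those cases, as noted in the proof of Theorem \ref{point}). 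With this one-line fix your enumeration does reproduce the seven rows of the table; note also a small arithmetical slip: $H_5\oplus A_4^*(5)$ has $m=a=4$, not $3$.
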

\proof 
 The list of lattices in Table \ref{ord5} is obtained by using Theorem \ref{RS} and Corollary \ref{genus} (which implies that $m-a$ is a positive even integer).
For any such lattice $S(\sigma)$ the invariants $g,k, n_i$ of $X^{\sigma}$ can be computed by means of Lemma \ref{chi}, Theorem \ref{point} and Corollary \ref{genus}.
\qed\\

We will now show that  a $K3$ surface $X$ with a non-symplectic automorphism $\sigma$ of order $5$ such that $S_X=S(\sigma)$ belongs to one of the two families in Examples \ref{fam1}, \ref{fam2}.  This will also show that all cases in Table \ref{ord5} do appear.

By Theorem \ref{o5} the invariant lattice of $\sigma$ is one of the following types.\\
  
\noindent \fbox{ $S(\sigma)\cong H_5$}  Let  $h$ be the generator of $H_5$ with $h^2=2$. By \cite[Lemma 3.5]{MO}  we can assume $h$ to be ample and  base point free.  Thus the morphism associated to $|h|$  is a double cover of $\Ps2$ branched along a smooth plane sextic $B$. Moreover, since $h\in S(\sigma)$, then the action of $\sigma$ on $X$ induces a projectivity $\bar\sigma$ of $\Ps2$. By Theorem \ref{o5} the fixed locus of $\sigma$ contains a curve of genus $g=2$. This implies that $\bar\sigma$ has a curve in the fixed locus, hence for a suitable choice of coordinates it is of the form \eqref{aut1}.
 Thus $(X,\sigma)$ belongs to the family in Example \ref{fam1}.\\

\noindent \fbox{$S(\sigma)\cong H_5\oplus  A_4$} In this case we can assume  $h$  to be nef and  base point free, by a reasoning similar to the one in the proof of \cite[Lemma 3.5]{MO}. The associated morphism is a double cover of $\Ps2$ branched along a plane sextic $B$ with a singular point $p$ of type $A_4$. By  Theorem \ref{o5} the fixed locus of $\sigma$ contains a fixed curve. As before, this implies that $\bar\sigma$ is of type (\ref{aut1}). This implies that $X$ belongs to the family in  Example \ref{fam1}, a).  \\
   
\noindent The cases $H_5\oplus E_8$, $H_5\oplus A_4^2$, $H_5\oplus A_4\oplus E_8$ and $H_5\oplus E_8^2$ can be discussed in a similar way and correspond to Examples \ref{fam1} $c), d)$ and $e)$ respectively.\\

\noindent \fbox{$S(\sigma)\cong H_5\oplus A_4^*(5)$} Since $A_4^*(5)$ does not contain vectors with self-intersection $-2$ (see the description of $A_4^*$ in \cite{lat}), then we can assume $h$ to be ample as before and the associated morphism is a double cover branched along a smooth plane sextic $B$. 
By Theorem \ref{o5}, the automorphism $\sigma$ has at most isolated fixed points. Thus the same is true for $\bar \sigma$ so that, for a suitable choice of coordinates, it is of the form (\ref{aut2}).
Hence $(X,\sigma)$ is as in Example \ref{fam2}.

\begin{rem}
In \cite{OZ3} K. Oguiso and D-Q. Zhang showed the uniqueness of a $K3$ surface with a non-symplectic automorphism of order five and fixed locus containing no curves of genus $\geq 2$ and at least $3$ rational curves. In their approach they used log Enriques surfaces.
\end{rem}

\begin{rem}  
In \cite[\S 3.1, 3.2]{Kondo2} and in \cite[Remark 6]{K}  S. Kond\=o  considers the minimal resolution $X$ of the double cover of $\mathbb{P}^{2}$ branched along the union of the line $x_2=0$ and the plane quintic curve   
\[ x_{2}^{5}=\prod_{i=1}^{5}(x_{0}-\lambda _{i}x_{1}), \]
where the $\lambda _{i}$'s are distinct complex numbers. 
Then $X$ is a $K3$ surface with an automorphism $\sigma$ induced from $\bar\sigma$ as in (\ref{aut1}) with $n=7$, $g=0$, $k=0$ and $T(\sigma)\cong U\oplus H_5\oplus A_4^{2}$ as computed in \cite[Section 12, p. 53]{DK}. Since this family has dimension $2$, this gives a different model for the family of $K3$ surfaces in Example \ref{fam1}, $c)$.

In \cite[\S 7,(7.6)]{Kondo1}  appears the following elliptic $K3$ surface with order $5$ automorphism:
$$y^{2}=x^{3}+t^{3}x+t^{7},\quad \sigma (x, y, t)=(\zeta_5 ^{3}x, \zeta_5 ^{2}y, \zeta_5 ^{2}t).$$
Here the fixed locus has the invariants $n=13$, $g=0$ and $k=2$, hence this gives a different model for Example \ref{fam1}, $e)$.
\end{rem}

\begin{ex}\label{fibr1}
Let $X_{\alpha,\beta,\gamma}$ be an elliptic $K3$ surface with Weierstrass equation
$$
y^2=x^3+(t^5+\alpha)x+(\beta t^{10}+t^5+\gamma)
,\quad \alpha, \beta, \gamma\in \C.$$
For generic $\alpha,\beta$ and $\gamma$ the fibration has a fiber of Kodaira type $IV$ over $t=\infty$ and $20$ fibers of type $I_1$. Observe that the fibration has an automorphism of order $5$:
$$
\sigma(x,y,t)=(x,y,\zeta_5 t)
.$$ 
This automorphism acts non trivially on the basis of the fibration and preserves the fibers over $t=0$ and $t=\infty$. 
In fact it fixes pointwise the smooth fiber over $t=0$. This implies, by looking at the local action at a fixed point, that  $\sigma$ is non-symplectic. Checking in Table \ref{ord5} one can see that the fixed locus has $n_1=3$, $n_2=1$, $g=1$ and $k=0$. In fact it is easy to see that this is an equation of the generic $K3$ surface in the family of Example \ref{fam1}, a). Moreover observe that:
\begin{enumerate}[1)]
\item  If $\beta=0$ then the fibration has a singular fiber of Kodaira type $III^*$ over $t=\infty$ and $15$ fibers of type $I_1$. This corresponds to the case $n_1=5$, $n_2=2$, $g=1$, $k=1$ in Table \ref{ord5}.
\item If $\alpha^3=-27/4 \gamma^2$ then the fibration has a singular fiber of type $IV$ over $t=\infty$ and of type $I_5$ over $t=0$. This corresponds to $n_1=5$, $n_2=2$, $g=k=0$.
\item  If $\alpha=\gamma=0$ then there is a fiber of type $IV$ over $t=\infty$ and a fiber of type $II^*$ over $t=0$. This corresponds to $n_1=7$, $n_2=3$, $g=0$, $k=1$.
\item  If $\alpha=\beta=\gamma=0$ then there is a fiber of type $III^*$ over $t=\infty$ and a fiber of type $II^*$ over $t=0$. This corresponds to $n_1=9$, $n_2=4$, $g=0$, $k=2$.
\end{enumerate}
\end{ex}
 
 We recall that a \emph{log-Enriques surface} is a normal algebraic surface $Y$ having at most quotient singularities such that $h^1(Y,\mathcal O_Y)=0$ and $mK_Y=\mathcal O_Y$ for some positive integer $m$.
The \emph{index} of $Y$ is the smallest positive $m$ with this property.
The canonical covering  $q:\tilde Y\ra Y$ (i.e. that induced by the relation $mK_Y=\mathcal O_Y$) is a cyclic cover of degree $m$, \'etale over the smooth points of $Y$. The minimal resolution of $\tilde Y$ is known to be either a $K3$ surface or an abelian surface. 
\begin{rem} The general theory of log Enriques surfaces has been developed in \cite{Zhang1, Zhang2, Zhang3}. Moreover, log Enriques surfaces of index $2$ have been studied in \cite{Zhang}, of index $5$ in \cite{OZ3}, of index $11$ in \cite{OZ1}, and of index  $13,17,19$ in \cite{OZ2}.
 \end{rem}
\begin{ex}\label{log5}
Let $\pi_{a,b,c}:Y_{a,b,c}\ra \Ps 1$ be the rational jacobian elliptic surface with Weierstrass equation
$$y^2=x^3+tx+(at^2+bt+c),\ \ a,b,c\in \C.$$
For generic $a,b,c$ the elliptic fibration has a fiber of Kodaira type $IV^*$ at $t=\infty$ and $4$ fibers of type $I_1$. 
Let $p: P\ra \Ps 1$ be a non-trivial principal homogeneous space of $\pi_{a,b,c}$ given by an order $5$ element in the fiber $\pi_{a,b,c}^{-1}(0)$ (see \cite[Ch.V, \S 4]{CD}). Then  $P$ is a rational elliptic surface with a multiple smooth elliptic fiber of multiplicity $5$ over $0$.
Let $Z$ be the surface obtained by blowing up the intersection points of the component of multiplicity $3$ of $p^{-1}(\infty)$ with those of multiplicity $2$  and then blowing down the four connected components of the proper transform of $p^{-1}(\infty)$.
Then an easy computation shows that $Z$ is a log Enriques surface of index $5$ with $4$ singular points (the images of the components of $p^{-1}(\infty)$). 
Let $X$ be the canonical cover of $Z$ and $\sigma$ be a generator of the covering transformation group of the cover. By      \cite[Theorem  5.1]{Zhang2}, since $Z$ has three singular points of type $(5, 2)$ and one of type $(5,1)$, then  $X$ is a $K3$ surface. Moreover,  $\sigma$ is an order $5$ non-symplectic automorphism of $X$ with only isolated fixed points.
\end{ex}

\section{Order 7}
The local actions of an order $7$ non-symplectic  automorphism on a $K3$ surface at a fixed point are of four types 
$$ A_{7,0}=\left(\begin{array}{cc}
1 &0\\
0&\zeta_7
\end{array}
\right),\ A_{7,1}=\left(\begin{array}{cc}
\zeta_7^2 &0\\
0& \zeta_7^6
\end{array}
\right)
,\   A_{7,2}=\left(\begin{array}{cc}
\zeta_7^3 &0\\
0& \zeta_7^5
\end{array}
\right),\ 
A_{7,3}=\left(\begin{array}{cc}
\zeta_7^4 &0\\
0& \zeta_7^4
\end{array}
\right)
.$$

\begin{ex}\label{7fam1}
Let $X_{a,b}$ be the elliptic $K3$ surface with Weierstrass equation
$$y^2=x^3+(at^7+b)x+(t^7-1),\quad a,b\in \C.$$
For generic $a,b$ the fibration has one fiber of Kodaira type $III$ over $t=\infty$ and $21$ fibers of type $I_1$. 
 Observe that $X$  carries the order $7$ automorphism
\begin{equation}  \sigma(x,y,t)=(x,y,\zeta_7t) .\end{equation}
Observe that:
 \begin{enumerate}[1)]
\item If $b$ is generic, then  $X_{0,b}$ has one singular fiber of type $II^*$ over $t=\infty$ and  $14$ singular fibers of type $I_1$;
\item  if $a$ is generic and $b^3=-27/4$, then $X_{a,b}$ has one singular fiber of type $III$ over $t=\infty$, one of type $I_7$ over $t=0$ and $14 $ fibers of type $I_1$;
\item if  $b^3=-27/4$, then $X_{0,b}$ has one singular fiber of type $II^*$ over $t=\infty$, one of type $I_7$ over $t=0$ and $7$ of type $I_1$.
\end{enumerate}
\end{ex}

\begin{ex}\label{7fam2}
Let $\pi_{a,b}:Y_{a,b}\lra \Ps 1$ be the rational jacobian elliptic surface with Weierstrass equation
$$y^2=x^3+tx+(at+b),\ \ a,b\in \C.$$
For generic $a,b$ the elliptic fibration has a fiber of Kodaira type $III^*$ at $t=\infty$ and $3$ fibers of type $I_1$. 
 To this rational surface we can associate a $K3$ surface with an order seven non-symplectic automorphism having zero-dimensional fixed locus. The construction is similar to the one described in Example \ref{log5}.
Let $p: P\ra \Ps 1$ be a non-trivial principal homogeneous space of $\pi_{a,b}$ given by an order $7$ element in the fiber $\pi_{a,b}^{-1}(0)$. The surface $Z$ obtained by blowing up the intersection points of the component of multiplicity $4$ of $p^{-1}(\infty)$ with those of multiplicity $3$  and then blowing down the three connected components of the proper transform of $p^{-1}(\infty)$ is a log Enriques surface of index $7$ with $3$ singular points (the images of the components of $p^{-1}(\infty)$). 
By  \cite[Theorem  5.2]{Zhang2}  the canonical cover of $Z$ is a $K3$ surface and a generator of the covering transformation group is an order $7$ non-symplectic automorphism with only isolated fixed points.
 \end{ex}

\begin{table}
\begin{tabular}{c|c|c|c|c|c|c}
$n_1$ & $n_2$ & $n_3$ & $g$ & $k$&   $T(\sigma)$ & $S(\sigma)$\\
\hline
2  &1  &0  &1 & 0 &$U\oplus U\oplus E_8\oplus A_6$& $U\oplus K_7$\\ 
 2 &1 &0 &- &  -& $U(7)\oplus U\oplus E_8\oplus A_6$ & $U(7)\oplus K_7$\\
 \hline
 4& 3& 1 &1 & 1&$U\oplus U\oplus E_8$& $U\oplus E_8$\\ 
 4& 3&1 &0 & 0&$U(7)\oplus U\oplus E_8 $& $U(7)\oplus E_8$\\
\hline 
 6& 5& 2 &0 & 1&$U\oplus U\oplus K_7$ & $U\oplus E_8\oplus A_6$\\
  \end{tabular}\\
 \ \\
 \ \\
 \caption{Order 7}\label{ord7}
\end{table}
\begin{thm}\label{o7} The fixed locus of  a non-symplectic automorphism $\sigma$ of order $7$  of a $K3$ surface is either
\begin{itemize}
\item the union of two isolated points of type $1$ and one point of type $2$ if $S(\sigma)$ is isomorphic to $U(7)\oplus K_7$, or
\item the disjoint union of $n_i$ isolated points of type $i$, a smooth curve of genus $g$ and $k$ smooth rational curves where $(g,k,n_i)$ appears in a row of Table \ref{ord7}.
\end{itemize}
 The same table gives the corresponding invariant lattice $S(\sigma)$ and its orthogonal complement $T(\sigma)$.
\end{thm}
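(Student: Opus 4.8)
The plan is to follow the pattern of Theorem~\ref{o5}: first I would list all hyperbolic $7$-elementary lattices that can occur as $S(\sigma)$, and then read off the topology of $X^{\sigma}$ for each of them from the results of Section~2. For the first step, set $r=\rk S(\sigma)$, so that $m=(22-r)/6$ is the rank of $T(\sigma)$ as a $\Z[\zeta_7]$-module. Since $T_X\subseteq T(\sigma)$ is nonzero, $m$ is a positive integer, whence $r\in\{4,10,16\}$ and $m\in\{3,2,1\}$. As $\zeta_7^{6}\neq-1$, the holomorphic Lefschetz number $L(\sigma)=1+\zeta_7^{6}$ is nonzero, so by the computation in the proof of Theorem~\ref{point} the fixed locus is of the form (\ref{fixl}); hence $2g=m-a$ by Corollary~\ref{genus}, forcing $0\le a\le m$ with $m-a$ even. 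Imposing in addition the existence conditions of Theorem~\ref{RS} for $p=7$ ($r$ even; $7\equiv(-1)^{r/2-1}\pmod 4$ when $a$ is odd and $r\equiv 2\pmod 4$ when $a$ is even; $r>a>0$ unless $r\equiv 2\pmod 8$) leaves exactly the pairs
$$(r,a)\in\{(4,1),\,(4,3),\,(10,0),\,(10,2),\,(16,1)\},$$
each of which determines its lattice uniquely by Theorem~\ref{RS}. I would then exhibit the explicit representatives listed in Table~\ref{ord7} --- $U\oplus K_7$ and $U(7)\oplus K_7$ for $r=4$, $U\oplus E_8$ and $U(7)\oplus E_8$ for $r=10$, and $U\oplus E_8\oplus A_6$ for $r=16$ --- by checking in each case the rank, the signature $(1,r-1)$ and the discriminant group $\Z_7^{a}$ (using that $A_6$ and $K_7$ are $7$-elementary with $a=1$). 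The orthogonal complement $T(\sigma)$ in $L_{K3}$ is $7$-elementary of rank $22-r$ with $|\det|=7^{a}$, so again by uniqueness it equals the lattice in the same row of Table~\ref{ord7} (for instance $T(\sigma)\cong U\oplus U\oplus E_8$ when $S(\sigma)\cong U\oplus E_8$).

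For the second step, the invariants of $X^{\sigma}$ attached to each of the five lattices follow by bookkeeping: Lemma~\ref{chi} gives $\chi(X^{\sigma})=24-7m$; specializing Table~\ref{t} to $p=7$ gives $\alpha=1-g+k=(r-4)/6=3-m$ together with $(n_1,n_2,n_3)=(2\alpha+2,\,2\alpha+1,\,\alpha)$; Corollary~\ref{genus} gives $g=(m-a)/2$; and then $k=\alpha-1+g$ follows from $\chi(X^{\sigma})=2\alpha+n$. Carrying this through the five cases reproduces the five rows of Table~\ref{ord7}. The pair $(r,a)=(4,3)$ is the one with $\alpha=g=0$, hence $k=-1$: there $X^{\sigma}$ consists of two isolated points of type $1$ and one of type $2$ with no fixed curve, which is the special case in the statement.

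The substance of the argument is already contained in Section~2 --- the Lefschetz formulas behind Theorem~\ref{point} and the Smith-sequence computation behind Corollary~\ref{genus} --- so what remains here is essentially routine. The two points that will need some care are checking that the arithmetic conditions of Theorem~\ref{RS} really single out exactly those five pairs $(r,a)$, with none omitted, and matching each abstract lattice with the concrete orthogonal sum listed in Table~\ref{ord7}; this last lattice-theoretic identification is the step I expect to be most delicate. Realizability --- that each of the five cases actually occurs on some $K3$ surface --- is not needed for the statement, but is verified afterwards through explicit models, namely the elliptic $K3$ surfaces of Example~\ref{7fam1} together with their degenerations and the log-Enriques construction of Example~\ref{7fam2}.
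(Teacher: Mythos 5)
Your proposal is correct and follows essentially the same route as the paper: the paper's proof is precisely the two-step pointer you elaborate, namely enumerating the admissible $7$-elementary hyperbolic lattices via Theorem~\ref{RS} and Theorem~\ref{ns}, and then computing $(g,k,n_i)$ for each via Lemma~\ref{chi}, Theorem~\ref{point} and Corollary~\ref{genus}, with the $(r,a)=(4,3)$ row handled by the $k=-1$ convention exactly as you do. Your explicit check that the constraints single out the five pairs $(r,a)$ and your identification of the lattices in Table~\ref{ord7} simply fill in details the paper leaves implicit, and you correctly note that realizability is established separately by Examples~\ref{7fam1} and~\ref{7fam2}.
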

\proof 
The lattices in Table \ref{ord7} can be found by means of  Theorem \ref{RS} and Theorem \ref{ns}. 
For any such lattice $S(\sigma)$ the invariants $g,k, n_i$ of  $X^{\sigma}$ can be computed by means of Lemma \ref{chi}, Theorem \ref{point} and Corollary \ref{genus}.	\qed\\

We will now show that  a $K3$ surface $X$ with a non-symplectic automorphism $\sigma$ of order $7$ such that $S_X=S(\sigma)$ belongs to one of the two families in Examples \ref{7fam1}, \ref{7fam2}.  This will also show that all cases in Table \ref{ord7} do appear.

Observe that each lattice $S(\sigma)$ in the table contains a copy of either $U$ or $U(7)$. This implies that the generic $K3$ surface with Picard lattice isometric to $S$ has an elliptic fibration.
By Theorem \ref{o7} the invariant lattice of $\sigma$ is of the following types.\\

\noindent \fbox{$S(\sigma)\cong U\oplus K_7$}  A $K3$ surface $X$ with $S_X=S(\sigma)$ has a jacobian elliptic fibration with a unique reducible fiber of type $\tilde A_1$ (since $K_7$ contains a unique $(-2)$-curve, as can be checked directly). 
 Observe that $\sigma$ preserves the elliptic fibration and  its action on the basis of the fibration is not trivial, since otherwise a smooth fiber would have an order $7$ automorphism with a fixed point in the intersection with the section.
Thus $\sigma$ preserves two fibers: one of them is smooth and the other one is the reducible fiber.
By Theorem \ref{o7} we know that $\sigma$ fixes exactly $3$ isolated points and a smooth elliptic curve.  
This implies that  the reducible fiber  is of Kodaira type $III$ (if it were of type $I_2$ it should contain a fixed rational curve).  

Since $X$ has a jacobian elliptic fibration with an order $7$ automorphism acting non trivially on the basis, then we can write it in Weierstrass form:
$$y^2=x^3+f(t)x+g(t),$$
where $\sigma$ acts as $(x,y,t)\mapsto (x,y,\zeta_7t)$. The polynomials $f(t), g(t)$ are invariant for this action and $\deg(f)\leq 8$, $\deg(g)\leq 12$. Hence for a proper choice of coordinates we can assume $f(t)=(at^7+b)t^m$ and $g(t)=(ct^{7}-d)t^n$ where $m\leq 1$, $n\leq 5$.
Since the discriminant divisor is also $\sigma$-invariant and  there are exactly $21$ fibers of type $I_1$ in $\mathbb P^1\backslash \{\infty\}$, then $\Delta(t)=\delta(t^7-\alpha)(t^7-\beta)(t^7-\gamma)$.
Looking at the coefficients of the equality $\Delta(t)=4f(t)^3+27g(t)^2$ we can deduce that $m=n=0$, hence the Weierstrass equation of the $K3$ surface is of type
$$y^2=x^3+(at^7+b)x+(ct^{7}-d).$$
After applying a suitable change of coordinates we obtain a surface in the family of Example \ref{7fam1}.

Similar remarks  show that $K3$ surfaces with $S_X\cong  U\oplus E_8$, $U\oplus K_7\oplus A_6=U(7)\oplus E_8$ and $U\oplus E_8\oplus A_6$ belong to the families a), b) and c) respectively in Example \ref{7fam1}.\\

\noindent \fbox{$S(\sigma)\cong U(7)\oplus K_7$}\ 
By \cite[Corollary 3, \S 3]{PS}  a $K3$ surface $X$ with $S_X= S(\sigma)$ admits a $\sigma$-invariant elliptic fibration $\pi:X\ra \Ps 1$. 
  
By Theorem \ref{o7} we know that $\sigma$ has exactly $3$ fixed points. This implies that $\pi_1$ has at most one reducible fiber of type $III$. 
Observe that $\sigma$ induces an order $7$ automorphism on the basis of $\pi_ 1$ with two fixed points i.e.  $\sigma$ preserves two fibers. We can assume them to be a smooth fiber over $0$ and a fiber of type $III$ over $\infty$.  Moreover, $\pi_1$ has generically $21$ fibers of type $I_1$, divided in $3$ $\sigma$-orbits.

The quotient $X/\langle \sigma\rangle$ is a log Enriques surface of index $7$ (see Lemma 1.7 and its proof in \cite{OZ4}) and $\pi_1$ induces and elliptic fibration $\bar \pi_1:X/\langle\sigma\rangle\ra \Ps 1$. Let $Y$ be the minimal resolution of $X/\langle\sigma\rangle$. The proper transform of $\bar\pi_1^{-1}(\infty)$ is a $(-1)$-curve. After contracting this curve we get a minimal rational elliptic surface $\tilde\pi_1:\bar Y\ra\Ps 1$ with one fiber of type $7I_0$, one of type $III^*$ and three of type $I_1$. It can be easily proved that the jacobian fibration of $\tilde \pi_1$ is as in Example \ref{7fam2}.

 \begin{rem} A different model for the  $K3$ surface with $S_{X}\cong U\oplus E_{8}\oplus A_{6}$ was given by S. Kond\=o in \cite[\S 7, (7.5)]{Kondo1}: 
$$y^{2}=x^{3}+t^{3}x+t^{8}, \quad \sigma (x, y, t)=(\zeta ^{3}x, \zeta y, \zeta ^{2}t).$$
The singular fibers of this elliptic fibration are of type III$^{\ast }$, of type IV$^{\ast }$ and 7 of type I$_{1}$.
Moreover  it follows from \cite[$\S 5$]{Shioda}  that the rank of its Mordell-Weil group is $1$.
 \end{rem}

\section{Order 11}
 
Non-symplectic automorphisms of order $11$ of $K3$ surfaces have been classified in 1999 by K. Oguiso and D-Q. Zhang \cite{OZ1}. We provide here an alternative view of their classification.
\begin{ex}\label{11fam1}
Let $X_{a}$ be the elliptic $K3$ surface with Weierstrass equation
$$y^2=x^3+ax+(t^{11}-1),\quad a\in \C.$$
For generic $a\in \C$ the fibration has one fiber of Kodaira type $II$ over $t=\infty$ and $22$ fibers of type $I_1$. 
 Observe that $X$  carries the order $11$ automorphism
\begin{equation}  \sigma(x,y,t)=(x,y,\zeta_{11}t) .\end{equation}
 If $a^3=-27/4$ then $X_{a}$ has one singular fiber of type $II$ over $t=\infty$, of type $I_{11}$ over $t=0$ and  $11$ fibers  of type $I_1$.
 \end{ex}

\begin{ex}\label{11fam2} 
 
Let $\pi_{a}:Y_a\lra \PP^1$ be the rational jacobian elliptic surface with Weierstrass equation
$$
y^2=x^3+x+(t-a),\ \ a\in \C.
$$
For generic $a$ the family has a fiber of type $II^*$ over $t=\infty$ and two fibers of type $I_1$ over the zeros of $\Delta=4+27(t-a)^2$. As in Examples \ref{log5} and \ref{7fam2}, we associate to $Y_a$ a $K3$ surface with a non-symplectic automorphism with zero-dimensional fixed locus.
Let $p:P\ra\PP^1$ be a non-trivial principal homogeneous space of $\pi_{a}$  given by an element of order $11$ in the fiber $\pi_{a}^{-1}(0)$. The  surface $Z$, obtained  after blowing up the intersection point of the components of multiplicity $5$ and $6$ in $p^{-1}(\infty)$ and then blowing down the proper transform of the fiber $p^{-1}(\infty)$, is a log Enriques surface of index $11$ with two singular points. By \cite[Theorem 5.1]{Zhang2}  the canonical cover of $Z$ is a $K3$ surface and a generator of the Galois group of the covering is a non-symplectic order $11$ automorphism  with two isolated fixed points. \end{ex}
\begin{table}
\begin{tabular}{c|c|c|c|c|c|c|c}
$n_1=n_4$&   $n_2=n_3$ & $n_5$& $g$ & $k$&   $T(\sigma)$ & $S(\sigma)$\\
\hline
1& 0& 0 & 1&0 &$U\oplus U\oplus  E_8 \oplus E_8$ & $U$\\
 1  &0  &0  &- &-  &$U\oplus U(11)\oplus E_8\oplus E_8$& $U(11)$\\ 
  \hline 
 3  &2  &1  &0 & 0 &$ K_{11}(-1)\oplus E_8$& $U\oplus A_{10}$\\ 
 
  \end{tabular}\\

 \ \\

  \caption{Order 11}\label{ord11}
\end{table}
\begin{thm} \label{o11} The fixed locus of  a non-symplectic automorphism $\sigma$ of order $11$  of a $K3$ surface is either
\begin{itemize}
\item the union of one  point of type $1$ and one of type $4$ if $S(\sigma)\cong U(11)$, or
\item the disjoint union of $n_i$ isolated points of type $i$, a smooth curve of genus $g$ and $k$ smooth rational curves where $(g,k,n_i)$ appears in a row of Table \ref{ord11}.
\end{itemize}
 The same table gives the corresponding invariant lattice $S(\sigma)$ and its orthogonal complement $T(\sigma)$.
\end{thm}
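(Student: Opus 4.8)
The plan is to follow the two-step scheme already used for Theorems~\ref{o5} and~\ref{o7}: first determine all hyperbolic $11$-elementary lattices that can occur as the invariant lattice $S(\sigma)$, and then read off the topology of $X^{\sigma}$ from the numerical results of Section~2. Writing $r=\rk S(\sigma)$, the relation $m(p-1)=22-r$ with $p=11$ forces $r=22-10m$; since $S(\sigma)$ is hyperbolic we have $r\ge 1$, and $m\ge 1$ by Theorem~\ref{ns}, so $m\in\{1,2\}$, i.e.\ $r\in\{12,2\}$. For each $m$ I would list the admissible $a$ by combining: $0\le a\le m$ from Theorem~\ref{ns}(c); $m-a$ a non-negative even integer from Corollary~\ref{genus} (the alternative $m-a=4$, which would make $X^{\sigma}$ a union of two elliptic curves, cannot occur here since $a\ge 0$ forces $m-a\le 2$); and the congruence conditions of Theorem~\ref{RS}, in which $11\equiv 3\pmod 4$ is decisive. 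This leaves exactly $(r,a)\in\{(2,0),(2,2),(12,1)\}$. By the uniqueness part of Theorem~\ref{RS} these correspond to $S(\sigma)\cong U$, $U(11)$ and $U\oplus A_{10}$; the orthogonal complements $T(\sigma)$ in $L_{K3}$ are $11$-elementary of rank $22-r$, signature $(2,20-r)$ and the same $a$, hence are pinned down by exhibiting any lattice with those invariants, namely $U^{\oplus 2}\oplus E_8^{\oplus 2}$, $U\oplus U(11)\oplus E_8^{\oplus 2}$ and $K_{11}(-1)\oplus E_8$, with $K_{11}$ the negative definite $11$-elementary lattice of Section~1.

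Once the pair $(S(\sigma),T(\sigma))$ is fixed, the invariants of $X^{\sigma}$ follow mechanically: $g$ from Corollary~\ref{genus} as $2g=m-a$; the integer $\alpha=1-g+k$ from the $p=11$ row of Table~\ref{t} as $\alpha=(r-2)/10$, so that $k=\alpha-1+g$; and the numbers $n_i$ of isolated points of each type again from that row, which for $p=11$ read $n_1=n_4=1+2\alpha$, $n_2=n_3=2\alpha$, $n_5=\alpha$. For $(r,a)=(2,0)$ I get $g=1$, $k=0$ and $(n_1,\dots,n_5)=(1,0,0,1,0)$; for $(12,1)$ I get $g=0$, $k=0$ and $(3,2,2,3,1)$; for $(2,2)$ I get $g=0$ and $\alpha=0$, hence $k=-1$, i.e.\ no fixed curves, and $X^{\sigma}$ is the pair of isolated points recorded by $(1,0,0,1,0)$ — one of type $1$, one of type $4$. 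As a consistency check I would verify $\chi(X^{\sigma})=2\alpha+n=24-11m$ against Lemma~\ref{chi} in each case.

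The one delicate point — and the step I expect to be the main obstacle — is the two cases with $m=a$ (the lattices $U(11)$ and $U\oplus A_{10}$), where $2g=m-a=0$ but Corollary~\ref{genus} still permits $X^{\sigma}=\emptyset$. I would rule the empty locus out via Lemma~\ref{chi}: $\chi(X^{\sigma})=24-mp$ equals $2$ for $m=2$ and $13$ for $m=1$, both nonzero, whereas an empty fixed locus has $\chi=0$ (and the two-elliptic-curves case is excluded for the same numerical reason noted above). Hence in those cases $X^{\sigma}$ necessarily has the shape of~(\ref{fixl}) with $g=0$, and the computation proceeds as described. Everything beyond this is routine bookkeeping; that all three cases are actually realized is then a separate issue, settled as for $p=5,7$ by the explicit families of Examples~\ref{11fam1} and~\ref{11fam2}.
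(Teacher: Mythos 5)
Your proposal is correct and follows essentially the same route as the paper: the admissible invariant lattices are pinned down by Theorem \ref{RS}, Theorem \ref{ns} and Corollary \ref{genus} (giving $(r,a)=(2,0),(2,2),(12,1)$, i.e.\ $U$, $U(11)$, $U\oplus A_{10}$), and the fixed-locus data are then read off from Lemma \ref{chi}, Theorem \ref{point} and Corollary \ref{genus}, with realization provided by Examples \ref{11fam1} and \ref{11fam2}. Your extra care in excluding $X^{\sigma}=\emptyset$ and the two-elliptic-curve case via $\chi(X^{\sigma})=24-11m\neq 0$ and $m-a\le 2$ is a detail the paper leaves implicit, but it is the same argument in substance.
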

\proof 
The lattices in Table \ref{ord11} can be found by means of  Theorem \ref{RS} and Theorem \ref{ns}. 
For any such lattice $S(\sigma)$ the invariants $g,k, n_i$ of $X^{\sigma}$ can be computed by means of Lemma \ref{chi}, Theorem \ref{point} and Corollary \ref{genus}.	\qed\\
 
 We will now show that  a $K3$ surface $X$ with a non-symplectic automorphism $\sigma$ of order $11$ such that $S_X=S(\sigma)$ belongs to one of the two families in Examples \ref{11fam1}, \ref{11fam2}.  This will also show that all cases in Table \ref{ord11} do appear.
Note that also in this case $X$ admits an elliptic fibration, since $S(\sigma)$ contains a copy of either $U$ or $U(11)$.\\

\noindent \fbox{$S(\sigma)\cong U$} A $K3$ surface $X$ with $S_X=S(\sigma)$ has a jacobian fibration with no reducible fibers. The order $11$ automorphism acts on the basis of the fibration with two fixed points i.e. it preserves two fibers. By Theorem \ref{o11} we know that the fixed locus contains one elliptic curve and two points. Thus there is one singular fiber  with two fixed points and a smooth fiber, which is pointwise fixed. 
Since $\sigma$ acts on the basis of the fibration and $\mathcal X(X)=24$, it follows that the $\sigma$-invariant singular fiber is of Kodaira type II.
Thus we are in Example \ref{11fam1}.\\

\noindent\fbox{$S(\sigma)\cong U\oplus A_{10}$} 
By Theorem \ref{o11} the fixed locus contains  one smooth rational curve $R$ and $11$ points. 
Note that $X$ in this case has a jacobian fibration with one fiber of type $I_{11}$ and $R$ is necessarily a component of this reducible fiber. Hence the fiber $I_{11}$ contains $9$ fixed points. The remaining two fixed points belong to an irreducible fiber of type $II$.  Thus we are in Example \ref{11fam1}.\\

\noindent \fbox{$S(\sigma)\cong U(11)$} By \cite[Corollary 3,\S 3]{PS} a $K3$ surface $X$ with $S_X=S(\sigma)$ carries  an elliptic fibration $\pi:X\ra\Ps 1$ (not jacobian). 
By Theorem \ref{o11} we know that there are no curves in the  fixed locus and two isolated points.   Hence $\pi$  has at most one reducible fiber of type $II$. Since $\sigma$ induces an order $11$ automorphism of $\PP^1$, then there are two fibers which are preserved. These are the fiber of type $II$ and a smooth elliptic fiber, which we can assume to be over $\infty$ and $0$. For the generic $K3$ the other singular fibers are $22$ of type $I_1$, divided in two orbits.

The quotient $X/\langle\sigma\rangle$ is a log Enriques surface of index $11$ (see Lemma 1.7 and its proof, \cite{OZ4}) with two singular points (the images of the fixed points on $X$) and an elliptic fibration $\bar{\pi}:X/\langle\sigma\rangle\ra \PP^1$. We can now consider the minimal resolution of $X/\langle\sigma\rangle$, where the proper transform of  $\bar{\pi}^{-1}(\infty)$ is a $(-1)$-curve. After contracting this $(-1)$-curve we obtain a smooth rational elliptic  surface $Y\ra\PP^1$ with   $11$ fibers of type $I_0$, one fiber of type $II^*$ and two fibers of type $I_1$. Thus we are in  Example \ref{11fam2}.

\begin{ex}[\cite{Kondo1}]
Let $X$ be the $K3$ surface  with non-symplectic automorphism:
$$y^2=x^3+t^5x+t^2,\ \sigma(x,y,t)= (\zeta_{11}^5x,\zeta_{11}^2y,\zeta_{11}^2t).$$
The elliptic fibration has one singular fiber of type $IV$, one of type $III^*$ and $11$ of type $I_1$. This is the only $K3$ surface with order $11$ automorphism such that $\rank T(\sigma)=10$.
Note that the fiber of type $III^*$ contains $7$ fixed points and a rational fixed curve, while the fiber of type $IV$ contains $4$ fixed points.
\end{ex}

\section{Order 13, 17, 19}
$K3$ surfaces with non-symplectic automorphisms of order $13, 17$ and $19$ are well known and studied in \cite{Kondo1} and  \cite{OZ2}.
The following examples are due to Kond\=o.
\begin{ex}\label{fam13}
Let $X$ be the $K3$ surface  with non-symplectic automorphism:
$$y^2=x^3+t^5x+t,\quad \sigma(x,y,t)= (\zeta_{13}^5x,\zeta_{13}y,\zeta_{13}^2t).$$
The elliptic fibration has one singular fiber of type $II$, one of type $III^*$ and $13$ fibers of type $I_1$.  
Note that the fiber of type $III^*$ contains $7$ fixed points and a rational fixed curve, while the fiber of type $II$ contains $2$ fixed points.
\end{ex}

\begin{ex}\label{fam17}
Let $X$ be the $K3$ surface with non-symplectic automorphism:
$$y^2=x^3+t^7x+t^2,\quad \sigma(x,y,t)= (\zeta_{17}^7x,\zeta_{17}^2y,\zeta_{17}^2t).$$
The elliptic fibration has one singular fiber of type $IV$, one of type $III$ and $17$ fibers of type $I_1$.  
Note that the fiber of type $IV$ contains $4$ fixed points, while the fiber of type $III$ contains $3$ fixed points.
\end{ex}
\begin{ex}\label{fam19}
Let $X$ be the $K3$ surface  with non-symplectic automorphism:
$$y^2=x^3+t^7x+t,\quad \sigma(x,y,t)= (\zeta_{19}^7x,\zeta_{19}y,\zeta_{19}^2t).$$
The elliptic fibration has one singular fiber of type $II$, one of type $III$ and $19$ fibers of type $I_1$.  
Note that the fiber of type $II$ contains $2$ fixed points, while the fiber of type $III$ contains $3$ fixed points.
\end{ex}

\begin{table}
\begin{tabular}{c|c|c|c|c|c|c}
$n_1=n_2$ &  $n_3$ & $n_4$& $n_5=n_6$& $k$ &   $T(\sigma)$ & $S(\sigma)$\\
 \hline 
 3 & 2& 1&  0 &  1&  $U\oplus H_{13}\oplus E_8$ & $H_{13}\oplus E_8$
 
  \end{tabular}\\

 \ \\

 \ \\

  \caption{Order 13}\label{t4}
\end{table}
\begin{table}
\begin{tabular}{c|c|c|c|c|c|c}
$n_1=n_2=n_3=n_4$& $n_5=n_8$&$n_6$ & $n_7$ &     $k$ &    $T(\sigma)$ & $S(\sigma)$\\
\hline
0& 1& 2 & 3    &  0 &    $U\oplus U\oplus E_8\oplus L_{17}$& $U\oplus L_{17}$ \\ 
 \end{tabular}\\

 \ \\

 \ \\

\caption{Order 17}\label{t5}
\end{table}
\begin{table}

\begin{tabular}{c|c|c|c|c|c}
$n_1=n_2=n_3=n_8=n_{9}$ & $n_4=n_{6}=n_7$ &  $n_5$ &   $k$ &    $T(\sigma)$ & $S(\sigma)$\\
\hline
$0$  & $1$ & $2$ & $0$ &  $K_{19}(-1)\oplus E_8\oplus E_8$ & $U\oplus K_{19}$
   \end{tabular}\\

 \ \\

 \ \\
\caption{Order 19}\label{t6}
\end{table}

\begin{thm} \label{o131719}  A $K3$ surface with a non-symplectic automorphism $\sigma$ of order $13, 17$ or $19$  
 is isomorphic to the surface in Example \ref{fam13},  \ref{fam17} or  \ref{fam19} respectively.
 
The fixed locus of such automorphism  is the union of $n_i$ points of type $i$ and $k$ smooth rational curves, as described in Tables \ref{t4}, \ref{t5} and \ref{t6} respectively.
The same table gives the corresponding invariant lattices $S(\sigma)$ and their orthogonal complements.\end{thm}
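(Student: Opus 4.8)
The plan is to deduce the lattice data from Theorems~\ref{RS} and~\ref{ns} together with Corollary~\ref{genus}, read off the fixed locus from Lemma~\ref{chi} and Theorem~\ref{point}, and obtain the uniqueness of the pair $(X,\sigma)$ from a period argument, the Examples providing existence. Keep the notation $r=\rk S(\sigma)$ and $m=(22-r)/(p-1)$. Since $S(\sigma)$ is hyperbolic, $r\geq 1$; as $m(p-1)=22-r$ and already $m=2$ forces $r=22-2(p-1)<0$, we get $m=1$, hence $r=10,6,4$ for $p=13,17,19$. By Theorem~\ref{ns}~c), $a\leq m=1$, and by Corollary~\ref{genus}, $m-a\in 2\Z_{\geq 0}$; therefore $a=1$ and $g=(m-a)/2=0$. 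By Theorem~\ref{RS} an indefinite $p$-elementary lattice of given rank ($\geq2$) and $a=1$ is unique, so $S(\sigma)$ is forced; one checks that $H_{13}\oplus E_8$, $U\oplus L_{17}$, $U\oplus K_{19}$ are $p$-elementary of ranks $10,6,4$ with $a=1$, which identifies the $S(\sigma)$-column of Tables~\ref{t4},~\ref{t5},~\ref{t6}, and likewise $T(\sigma)=S(\sigma)^{\perp}\subset L_{K3}$ (signature $(2,20-r)$, $p$-elementary, $a=1$) is the lattice in the $T(\sigma)$-column.

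For the fixed locus, $g=0$ by Corollary~\ref{genus}, so $X^{\sigma}$ is a disjoint union of isolated points and smooth rational curves (it cannot be empty or a union of elliptic curves: those require $S(\sigma)$ to be $2$-elementary, with $m-a=0$ and the special form $U(2)\oplus E_8(2)$, or $m-a=4$). Lemma~\ref{chi} gives $\chi(X^{\sigma})=24-mp=24-p$, Theorem~\ref{point} gives the value of $\alpha$ and the numbers $n_i$ of isolated points of type $i$ as functions of $r$, so $n=\sum_i n_i$ is determined, and the number of fixed rational curves is $k=(\chi(X^{\sigma})-n)/2$. Plugging in $r=10,6,4$ yields $(\alpha,n,k)=(1,9,1),(0,7,0),(0,5,0)$ for $p=13,17,19$, with the individual $n_i$ exactly as tabulated in Tables~\ref{t4}--\ref{t6}.

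It remains to show that $(X,\sigma)$ is unique and equals Example~\ref{fam13}, \ref{fam17} or~\ref{fam19}. First $S_X=S(\sigma)$: since $\sigma^{*}$ preserves $T_X\subseteq T(\sigma)$ and acts on it without non-zero fixed vectors, $T_X$ is free over $\Z[\zeta_p]$ by Theorem~\ref{ns}~b), so $\rk_{\Z}T_X$ is a positive multiple of $p-1$; but $\rk_{\Z}T_X\leq \rk_{\Z}T(\sigma)=p-1$, whence $\rk_{\Z}T_X=p-1$ and, as $S(\sigma)\subseteq S_X$ are both primitive of equal rank, $S_X=S(\sigma)$. Now $\omega_X$ lies in the $\zeta_p$-eigenspace of $\sigma^{*}$ on $T_X\otimes\C=T(\sigma)\otimes\C$, which is one-dimensional because $T(\sigma)$ is $\Z[\zeta_p]$-free of rank $m=1$; hence the period point $[\omega_X]$ is the single point of the relevant period domain. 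Together with the uniqueness up to conjugacy in $O(L_{K3})$ of the representation $\sigma^{*}$ — which follows from the uniqueness of $S(\sigma)$, of the free rank one $\Z[\zeta_p]$-lattice $T(\sigma)$, and from $\sigma^{*}|_{A_{S(\sigma)}}=\mathrm{id}$ (cf. Section~9 and Theorem~\ref{moduli1}) — the Torelli theorem for $K3$ surfaces carrying a non-symplectic automorphism gives that such a pair is unique up to isomorphism, and Examples~\ref{fam13},~\ref{fam17},~\ref{fam19} realize it. Alternatively, exactly as in the proof of Theorem~\ref{o11}: each $S(\sigma)$ contains a primitive copy of $U$ (for $p=13$ one exhibits a hyperbolic plane inside $H_{13}\oplus E_8$ directly), so $X$ has a $\sigma$-invariant jacobian elliptic fibration, and $\sigma$ acts non-trivially on its base; the Kodaira types of the two invariant fibres are pinned down by the fixed locus just computed, and $\sigma$-invariance of the discriminant and of the Weierstrass coefficients leaves a single normal form, which is that of the quoted example.

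The main obstacle is this last step. The lattice computation and the fixed-locus computation are routine once Theorems~\ref{RS},~\ref{ns},~\ref{point} and Corollary~\ref{genus} are available; the substantive points are the uniqueness of the conjugacy class of $\sigma^{*}$ on $L_{K3}$ together with the (global, equivariant) Torelli theorem, or, in the fibration-theoretic route, the verification that $\sigma$-invariance forces a unique Weierstrass equation. Finally one must check — by listing the singular fibres and verifying Euler numbers and the local actions at the fixed points — that the elliptic $K3$ surfaces of Examples~\ref{fam13},~\ref{fam17},~\ref{fam19} do admit an order $p$ non-symplectic automorphism with the fixed locus of Tables~\ref{t4}--\ref{t6}; this is concrete but mechanical.
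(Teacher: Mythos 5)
The first half of your argument --- forcing $m=1$, hence $a=1$ and $g=0$, identifying $S(\sigma)$ and $T(\sigma)$ via Theorem \ref{RS}, and reading off $\alpha$, the $n_i$ and the number of fixed rational curves from Lemma \ref{chi}, Theorem \ref{point} and Corollary \ref{genus} --- is correct and coincides with the paper's proof. One small slip there: you rule out the empty fixed locus by appealing to the shape $U(2)\oplus E_8(2)$, which is part of the classification being established; since $m=a=1$ the parity argument does not exclude emptiness, but the clean reason (which you do have in hand) is simply $\chi(X^{\sigma})=24-mp=24-p\neq 0$.

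The uniqueness step is where there is a genuine gap. Your primary route hinges on the claim that $\sigma^{*}$ is determined up to conjugacy in $O(L_{K3})$ by the data ``$S(\sigma)$ unique, $T(\sigma)$ free of rank one over $\Z[\zeta_p]$, trivial action on the discriminant group''. This is not proved, and the justification you point to (Section 9, i.e.\ Proposition \ref{uni} and Theorem \ref{moduli1}) is circular: Proposition \ref{uni} is deduced in the paper from the geometric classifications of Sections 5--8, in particular from Theorem \ref{o131719} itself. Without an independent lattice-theoretic argument that any two such order-$p$ isometries of $L_{K3}$ are conjugate (one must control the possible $\Z[\zeta_p]$-hermitian structures on $T(\sigma)$ and the gluing of $S(\sigma)$ with $T(\sigma)$), the observation that the eigenspace period domain is a single point only gives uniqueness \emph{within one} conjugacy class $[\rho]$, not uniqueness of the pair $(X,\sigma)$. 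Your ``alternative'' route is in fact the paper's proof, but you leave out exactly the substantive verifications: the paper does not merely exhibit a copy of $U$ inside $S(\sigma)$; it constructs primitive sublattices $U\oplus E_7$, $U\oplus A_2\oplus A_1$, $U\oplus A_1$ (for $p=13,17,19$) whose orthogonal complements inside $S_X=S(\sigma)$ contain no $(-2)$-vectors, and it is this that pins the reducible fibers of the jacobian fibration down to types $III^{*}$, $IV$ and $III$, and $III$ respectively; then the nontrivial action of $\sigma$ on the base, the count $\chi(X)=24$, and the Weierstrass-coefficient analysis carried out as in the proof of Theorem \ref{o7} produce the unique normal forms of Examples \ref{fam13}, \ref{fam17}, \ref{fam19}. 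Asserting that the fiber types are ``pinned down by the fixed locus'' and that invariance ``leaves a single normal form'' is the statement to be proved, not a proof of it.
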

 \proof
 The lattices in tables  \ref{t4}, \ref{t5} and \ref{t6}  can be found by means of  Theorem \ref{RS} and Theorem \ref{ns}. 
For any such lattice $S(\sigma)$ the invariants $g,k, n_i$ of the fixed locus $X^{\sigma}$ can be computed by means of Lemma \ref{chi}, Theorem \ref{point} and Corollary \ref{genus}.	 

We now show that a K3 surface with a non-symplectic automorphism of order $13, 17$ or $19$ is as in Example \ref{fam13},  \ref{fam17} or  \ref{fam19} respectively.

If $p=13$ then $S(\sigma)\cong H_{13}\oplus E_8$. Let $e_1,e_2$ be the generators of $H_{13}$ with intersection matrix as in section 1 and $f\in E_8$. The vectors $e_1-e_2+f, e_2+f $ and a basis of $f^{\perp}\cong E_7\subset E_8$ generate a primitive sublattice $S$ of $S(\sigma)$ isometric to $U\oplus E_7$ such that $S^{\perp}$ contains no $(-2)$-curves.
It follows that $X$ admits a jacobian elliptic fibration $\pi$ with a unique reducible fiber $F$ of type $III^*$. 
Observe that $\sigma$ induces a non-trivially action on the basis of $\pi$, since otherwise the general fiber  would have an order $13$ automorphism with a fixed point (the intersection with a section of $\pi$).
Thus $\sigma$ preserves exactly two fibers of $\pi$.
Since $\mathcal X(X)=24$ and $\mathcal X(F)=9$, then $\pi$ has also a $\sigma$-orbit of $13$ singular fibers of type $I_1$ and  a $\sigma$-invariant fiber of type $II$.
 Working as in the proof of Theorem \ref{o7} it can be proved that there is only one jacobian fibration with this property (see also \cite[\S 4]{OZ2}). Thus $X$ is isomorphic to the surface in Example \ref{fam13}. 

The proofs for $p=17, 19$ are similar. In these cases $S(\sigma)$ contains a primitive sublattice $S\cong U\oplus A_2\oplus A_1$ and  $S\cong U\oplus A_1$ respectively such that $S^{\perp}$ contains no $(-2)$-curves. Thus the surface admits a jacobian fibration with reducible fibers of types $\tilde A_2\oplus \tilde A_1$ and $\tilde A_1$ respectively.
This implies, together with the fact that $\sigma$ acts non-trivially on the basis of the fibration and a computation of $\mathcal X(X)$, that $X$ is isomorphic to either the surface in Example \ref{fam17} or \ref{fam19}.\qed

\section{Moduli spaces}\label{modulisection}
 Let $\rho\in \is(L_{K3})$ be an isometry of prime order $p$ with hyperbolic invariant lattice 
$$S(\rho)=\{x\in L_{K3}:\ \rho(x)=x\}$$ 
and let $[\rho]$ be its conjugacy class in $\is(L_{K3})$. A $[\rho]$-\emph{polarized} $K3$ surface is a pair $(X,\sigma)$ where $X$ is a $K3$ surface and $\sigma$ a non-symplectic automorphism of $X$ of order $p$ such that 
$$\sigma^*(\omega_X)=\zeta_p\omega_X\quad \mbox{ and } \quad \sigma^*=\phi\circ \rho \circ \phi^{-1}$$ for some isometry $\phi:L_{K3}\ra H^2(X,\Z)$, which is called a \emph{marking}.
Two $[\rho]$-polarized $K3$ surfaces $(X,\sigma),(X',\sigma')$ are \emph{isomorphic} if there exists an isomorphism $f:X\ra X'$ such that $f^{-1}\circ \sigma' \circ f=\sigma$.

Observe that a $[\rho]$-\emph{polarized} $K3$ surface is algebraic by \cite[Theorem 3.1]{N1}. 
If $h$ is an ample class in $S_X$, then the average $\sum_{i=1}^p (\sigma^*)^i(h)$ is an ample class in $S(\sigma^*)$.
This implies that any marked $[\rho]$-polarized $K3$ surface is $S(\rho)$-ample polarized  (see \cite[\S 10]{DK})

A moduli space for such polarized surfaces can be constructed as follows (see \cite[\S 11]{DK}). 
Let $S(\rho)^{\perp}=T(\rho)$ and $V^{\rho}=\{x\in L_{K3}\otimes \C:\ \rho_{\C}(x)=\zeta_p x\}\subset T(\rho)\otimes \C$ be an eigenspace of the natural extension of  $\rho$ to  $L_{K3}\otimes \C$.  Consider the space
$$D^{\rho}=\{w\in \mathbb P(V^{\rho}):\ (w,\bar w)>0, (w,w)=0\}.$$
This is a type IV Hermitian symmetric space of dimension $r(T(\rho))-2$ if $p=2$ and  it is isomorphic to a complex ball of dimension $r(T(\rho))/(p-1)-1$ if $p>2$ (note that if $\zeta_p\notin\R$ then the condition $(w,w)=0$ is automatically true).

\noindent Furthermore, consider the divisor
$$\Delta^{\rho}=\bigcup_{\delta\in T(\rho),\\ \delta^2=-2} D^{\rho}\cap \delta^{\perp}$$
and the discrete group $\Gamma^{\rho}=\{\gamma\in \is(L_{K3}):\  \gamma \circ \rho=\rho \circ \gamma\}.$

\begin{thm}\label{moduli}
The orbit space $\mathcal M^{\rho}:=\Gamma^{\rho}\backslash (D^{\rho}\backslash \Delta^{\rho})$ parametrizes isomorphism classes of $[\rho]$-polarized $K3$ surfaces.
\end{thm}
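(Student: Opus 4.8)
The plan is to establish the bijection between points of $\mathcal M^\rho$ and isomorphism classes of $[\rho]$-polarized $K3$ surfaces by combining the Torelli theorem with the surjectivity of the period map, in the equivariant form adapted to the eigenspace $V^\rho$. First I would set up the period map: for a marked $[\rho]$-polarized $K3$ surface $(X,\sigma,\phi)$, the class $\phi^{-1}(\C\omega_X)$ lies in $V^\rho$ because $\sigma^*\omega_X=\zeta_p\omega_X$ and $\sigma^*=\phi\circ\rho\circ\phi^{-1}$; moreover $(\omega_X,\bar\omega_X)>0$ and $(\omega_X,\omega_X)=0$ by the Hodge--Riemann relations, so the period point lies in $D^\rho$. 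Since $\omega_X$ is orthogonal to every algebraic class, in particular to every $(-2)$-class $\delta\in T(\rho)$ (these are algebraic because $T(\rho)\subset S_X$ by Theorem \ref{ns}~a), applied to $\rho$ realized as $\sigma^*$), the period point avoids $\Delta^\rho$. Thus we get a well-defined period point in $D^\rho\setminus\Delta^\rho$, and changing the marking $\phi$ by an element of $\Gamma^\rho$ changes the period point by that same element, so we obtain a well-defined map from isomorphism classes of $[\rho]$-polarized $K3$ surfaces to $\mathcal M^\rho$.

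Next I would prove injectivity using the strong (Hodge-theoretic) Torelli theorem together with the fact, noted in the excerpt, that a $[\rho]$-polarized $K3$ surface is $S(\rho)$-ample polarized: if two such surfaces $(X,\sigma)$ and $(X',\sigma')$ have the same period point in $\mathcal M^\rho$, one may choose markings $\phi,\phi'$ so that the induced Hodge isometry $\psi:H^2(X,\Z)\to H^2(X',\Z)$ commutes with the $\sigma^*$-action and maps $\C\omega_X$ to $\C\omega_{X'}$. One then adjusts $\psi$ by an element of the Weyl group generated by reflections in $(-2)$-classes in $T(\rho)$ — which acts trivially on the eigenspace picture away from $\Delta^\rho$ and commutes with $\rho$ — so that $\psi$ maps the ample cone of $X$ into that of $X'$ while preserving the common polarization $S(\rho)$; by the Torelli theorem $\psi$ is induced by an isomorphism $f:X'\to X$, and because $\psi$ commutes with $\sigma^*,\sigma'^*$ and $\langle\sigma\rangle$ has prime order, $f$ conjugates $\sigma$ to a power of $\sigma'$ acting the same way on the $2$-form, hence (after possibly composing with a power of $\sigma'$) $f^{-1}\circ\sigma'\circ f=\sigma$.

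For surjectivity I would invoke the surjectivity of the period map for $K3$ surfaces: given $w\in D^\rho\setminus\Delta^\rho$, there is a $K3$ surface $X$ and a marking $\phi:L_{K3}\to H^2(X,\Z)$ with $\phi(w)=\C\omega_X$. One checks that $S(\rho)\subset S_X$ (since $w\perp S(\rho)$) and that $S(\rho)$ contains an ample class: a general argument shows the polarization class in $S(\rho)$ stays ample precisely because $w\notin\Delta^\rho$ rules out the $(-2)$-classes in $T(\rho)$ that could obstruct it, and after composing $\phi$ with a suitable Weyl-group element one may assume $\phi$ maps a fixed ample vector of $S(\rho)$ to an ample class of $X$. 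Then $\rho$ transported by $\phi$ is a Hodge isometry of $H^2(X,\Z)$ preserving the ample cone and fixing the polarization, so by Torelli it is $\sigma^*$ for a unique automorphism $\sigma$ of $X$; since $\rho$ acts on $V^\rho$, hence on $\C\omega_X$, by $\zeta_p$, the automorphism $\sigma$ is non-symplectic of order $p$, and $(X,\sigma)$ is a $[\rho]$-polarized $K3$ surface with the prescribed period.

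The main obstacle I expect is the ampleness bookkeeping in the injectivity and surjectivity steps: one must verify that modifying the marking by reflections in $(-2)$-classes of $T(\rho)$ — the only way to move inside a fiber of $D^\rho\to\mathcal M^\rho$ without leaving the relevant chamber — suffices to bring the Hodge isometry into the ample cone while keeping it $\rho$-equivariant and fixing $S(\rho)$ pointwise, and that the condition $w\notin\Delta^\rho$ is exactly what guarantees a chamber containing a $\sigma^*$-invariant ample class. This is where the results of \cite{DK} on $S(\rho)$-ample polarized $K3$ surfaces do the real work; the rest is a bookkeeping application of Torelli and surjectivity of the period map. I would cite \cite[\S 11]{DK} for the general framework and supply the equivariance details explicitly.
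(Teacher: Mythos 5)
Your overall strategy (period map into $D^{\rho}$, surjectivity of the period map plus the global Torelli theorem, with the ampleness handled by the $S(\rho)$-ample polarization of \cite[\S 10--11]{DK}) is the same as the paper's, but the one genuinely nontrivial point of the forward direction is wrong as you argue it. You claim the period point avoids $\Delta^{\rho}$ because ``$T(\rho)\subset S_X$ by Theorem \ref{ns} a)'' and $\omega_X$ is orthogonal to algebraic classes. First, Theorem \ref{ns} a) gives $S(\sigma)\subset S_X$ and $T_X\subset T(\sigma)$, not $T(\rho)\subset S_X$; the $(-2)$-vectors of $T(\rho)$ need not be algebraic. Second, the inference is inverted: $\Delta^{\rho}$ is by definition the locus of periods orthogonal to some $(-2)$-vector $\delta\in T(\rho)$, so if $\omega_X$ were orthogonal to such a $\delta$ the period point would lie \emph{in} $\Delta^{\rho}$, not outside it. The correct argument, which is the paper's, is by contradiction via effectivity: if $\ell\in\delta^{\perp}$ with $\delta\in T(\rho)$, $\delta^2=-2$, then $\phi(\delta)$ is algebraic, hence $x=\pm\phi(\delta)$ is effective by Riemann--Roch; since $\delta\perp S(\rho)$ the full $\rho$-orbit of $\delta$ sums to zero, so $\sigma^*(x)+\cdots+(\sigma^*)^{p-1}(x)=-x$, and the left-hand side is effective while $-x$ is not. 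This step cannot be skipped or replaced by your orthogonality remark.

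A secondary problem is your proposed ``adjustment by the Weyl group generated by reflections in $(-2)$-classes of $T(\rho)$'' in the injectivity and surjectivity steps. Such reflections are not usable here: once $\ell\notin\Delta^{\rho}$, no $(-2)$-vector of $T(\rho)$ is orthogonal to the period, so these reflections are not Hodge isometries of the surfaces in question; moreover they do not commute with $\rho$ (for $\delta\in T(\rho)$ nonzero, $\rho(\delta)=\pm\delta$ is impossible since $\rho$ has odd prime order and no nonzero fixed vectors on $T(\rho)$), so they do not even lie in $\Gamma^{\rho}$. The paper avoids this entirely: in the converse direction it uses \cite[Theorem 10.1]{DK} to choose the marking so that $\phi_{|S(\rho)}$ is already an ample polarization, and then applies the global Torelli theorem directly to $\psi=\phi\circ\rho\circ\phi^{-1}$, which fixes an ample class and whose orthogonal part contains no algebraic $(-2)$-classes precisely because $\ell\notin\Delta^{\rho}$; this yields a unique automorphism $\sigma$ with $\sigma^*=\psi$, with no chamber-moving argument needed.
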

\proof 
 Let $(X,\sigma)$ be a $[\rho]$-polarized $K3$ surface and $\phi:L_{K3}\ra H^2(X,\Z)$ be an isometry such that $\sigma^*=\phi\circ \rho \circ \phi^{-1}$. Since $\sigma^*(\omega_X)=\zeta_p\omega_X$, then $\ell:=\phi_{\C}^{-1}(\C\omega_X)\in D^{\rho}$.  If $\ell \in \delta^{\perp}$ for some $\delta\in T(\rho),  \delta^{2}=-2$, then either $\phi(\delta)$ or  $\phi(-\delta)$ would be an effective class $x$ such that $\sigma^*(x)+\cdots+(\sigma^*)^{p-1}(x)=-x$. This gives a contradiction since the left side is an effective divisor and the right side is not, thus $\ell\not\in \Delta^{\rho}$.
An isometry $\phi'$ also satisfies $\sigma^*=\phi'\circ \rho \circ \phi'^{-1}$ if and only if $\phi^{-1}\circ\phi'\in \Gamma^{\rho}$. It is easy to check that two isomorphic $[\rho]$-polarized $K3$ surfaces give the same point $\ell\in D^{\rho}$ modulo $\Gamma^{\rho}$.

 Conversely, let  $\ell\in D^{\rho}\backslash \Delta^{\rho}$. By the surjectivity theorem of the period map \cite[\S X]{Ge} and \cite[Theorem 10.1]{DK} there exist a $K3$ surface and a marking $\phi:L_{K3}\ra H^2(X,\Z)$  such that $\phi_{\C}(\ell)=\C\omega_X$ and $\phi_{|S(\rho)}:S(\rho)\ra S_X$ is an ample polarization (see \cite[\S 10]{DK}). 
 Let $\psi=\phi\circ \rho\circ \phi^{-1}$, then $\psi_{\C}(\ell)=\ell$ and its invariant lattice $S(\psi)=\phi(S(\rho))$  contains an ample class .
  Moreover,  $S(\psi)^{\perp}\cap \C\omega_X^{\perp}$ contains no elements of self-intersection $-2$ since $\ell\not\in \Delta^{\rho}$. Thus, by the global Torelli Theorem \cite[\S IX]{Ge} there is a unique automorphism $\sigma$ of $X$ such that
 $\sigma^*=\psi$.
 It is clear that $\sigma$ is non-symplectic of order $p$ and that $\sigma^*(\omega_X)=\zeta_p \omega_X$. \qed

\begin{rem}
In \cite[Theorem 11.3]{DK} it is proved that $\Gamma_0^{\rho}\backslash (D^{\rho}\backslash \Delta^{\rho})$, where $\Gamma_0^{\rho}=\{\gamma \in \Gamma^{\rho}: \gamma_{|S(\rho)}=id\}$, parametrizes isomorphism classes of  $[\rho]$-polarized $K3$ surfaces with the extra data of an ample polarization  $j:S(\rho)\ra S_X$ (see also \cite[Remark 11.4]{DK}).
\end{rem}

 The following result says when two $K3$ surfaces with non-symplectic automorphisms of order $p$ belong to the same  moduli space $\mathcal M^{\rho}$.
\begin{pro}\label{uni}
Two pairs $(X_1,\sigma_1)$, $(X_2,\sigma_2)$ of $K3$ surfaces with non-symplectic automorphisms of order $p$ 
are polarized by the same $\rho\in O(L_{K_3})$ if and only if $S(\sigma_1)\cong S(\sigma_2)$.
This is also equivalent to say that $X_1^{\sigma_1}$ is homeomorphic to $X_2^{\sigma_2}$ for $p>2$.
\end{pro}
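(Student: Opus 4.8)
The plan is to use the moduli-space description of Theorem~\ref{moduli} to show that the conjugacy class $[\rho]$ polarizing a pair $(X,\sigma)$ is essentially captured by the invariant lattice $S(\sigma)$, and then to invoke Remark~\ref{obs} for the homeomorphism type. First I would observe that if $(X_1,\sigma_1)$ and $(X_2,\sigma_2)$ are polarized by the same $\rho$, then by definition $\sigma_i^*=\phi_i\circ\rho\circ\phi_i^{-1}$ for markings $\phi_i\colon L_{K3}\to H^2(X_i,\Z)$, so $S(\sigma_i)=\phi_i(S(\rho))$ and hence $S(\sigma_1)\cong S(\rho)\cong S(\sigma_2)$. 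This direction is immediate.

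For the converse, suppose $S(\sigma_1)\cong S(\sigma_2)$. By Theorem~\ref{ns}, both invariant lattices are hyperbolic $p$-elementary lattices primitively embedded in $L_{K3}$, with the same invariants $r,a$ (and, for $p=2$, the same $\delta$). By Theorem~\ref{RS} a hyperbolic $p$-elementary lattice is determined up to isometry by these invariants, and by Nikulin's theory such a lattice admits a unique primitive embedding into $L_{K3}$ up to isometry of $L_{K3}$; hence there is an isometry $\psi$ of $L_{K3}$ carrying $\phi_1^{-1}(S(\sigma_1))$ onto $\phi_2^{-1}(S(\sigma_2))$. The remaining point is that the action of $\sigma_i^*$ on $T(\sigma_i)$ — the $\Z[\zeta_p]$-module structure of Theorem~\ref{ns}(b) — is itself rigid: $T(\rho)$ is a free $\Z[\zeta_p]$-module of rank $m=(22-r)/(p-1)$, and one must check that, up to conjugation in $O(L_{K3})$, there is a unique isometry of $L_{K3}$ with a given invariant lattice $S$ and acting on $S^\perp$ without nonzero fixed vectors. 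I expect this to follow from the fact that $T(\rho)$, as a $p$-elementary lattice with a fixed-point-free order-$p$ isometry, is determined by $m$ and $a$ together with the discriminant form, and that the order-$p$ isometry is unique up to conjugacy once the module structure is fixed; this is the argument used by Kond\=o and by Oguiso--Zhang and is implicit in \cite[\S 11]{DK}. Combining these uniqueness statements produces a single conjugacy class $[\rho]$ with $S(\rho)\cong S(\sigma_1)\cong S(\sigma_2)$, and then Theorem~\ref{moduli} shows $(X_1,\sigma_1)$ and $(X_2,\sigma_2)$ are both $[\rho]$-polarized.

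Finally, for the last equivalence, when $p>2$ Remark~\ref{obs} asserts that the topological type of $X^\sigma$ — encoded by $g,k$ and the numbers $n_t$ of isolated points of each type — determines and is determined by the pair of invariants $(r,a)$ of $S(\sigma)$, via Lemma~\ref{chi}, Theorem~\ref{point}, and Corollary~\ref{genus}, together with Theorem~\ref{RS}. Hence $X_1^{\sigma_1}$ homeomorphic to $X_2^{\sigma_2}$ is equivalent to $(r_1,a_1)=(r_2,a_2)$, which for $p>2$ is equivalent to $S(\sigma_1)\cong S(\sigma_2)$, closing the loop.

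The main obstacle is the uniqueness up to conjugacy of the order-$p$ isometry $\rho$ given its invariant lattice, i.e.\ ruling out that the same lattice $S$ arises as the invariant lattice of two non-conjugate such isometries. One must use both Nikulin's uniqueness of primitive embeddings and the classification of the induced $\Z[\zeta_p]$-module structure on $T(\rho)$; the $p=2$ case requires the extra invariant $\delta$ and the last equivalence of the proposition genuinely fails there, which is why the statement is restricted to $p>2$.
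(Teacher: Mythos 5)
The easy direction and the final equivalence via Remark \ref{obs} are fine and agree with the paper. The problem is the converse: your whole argument rests on the claim that, up to conjugation in $\is(L_{K3})$, there is a unique order-$p$ isometry with a given invariant lattice $S$ acting without nonzero fixed vectors on $S^{\perp}$, and you never prove it — you only say you ``expect'' it and point vaguely at Kond\=o, Oguiso--Zhang and \cite[\S 11]{DK}. This is precisely the hard point, and it does not follow from the facts you cite. Knowing that $T(\rho)$ is a free $\Z[\zeta_p]$-module of rank $m$ does not pin down the isometry up to conjugacy: one must show that the induced $\Z[\zeta_p]$-Hermitian structure on $T$ is unique (a genus/class-number question over $\Z[\zeta_p]$), and moreover the conjugation must be realized by an isometry of all of $L_{K3}$, so one has to control the glue, i.e.\ the compatibility of the isometries of $S$ and $T$ on the discriminant groups. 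Nikulin's uniqueness of the primitive embedding of $S$ gives you none of this. (Kond\=o's appendix illustrates the point: even for a single case he has to write down an explicit $\rho$ on $T$ by hand.) There is also a smaller issue you gloss over: being $[\rho]$-polarized fixes the eigenvalue $\zeta_p$ on $\omega_X$, so in principle one must also rule out that different choices of primitive root lead to non-conjugate isometries; this again needs an argument.

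The paper closes the converse by a genuinely different route, which sidesteps the lattice-theoretic uniqueness entirely: the explicit classifications in \S\S 5--8 (and \cite[\S 4]{N2}, \cite[\S 5]{AS} for $p=2,3$) show that every pair $(X,\sigma)$ with a given invariant lattice $S(\sigma)$ lies in one explicitly described irreducible family, and irreducibility forces all such pairs into a single component $\mathcal M^{\rho}$, i.e.\ they are polarized by the same $\rho$. If you want to keep your purely lattice-theoretic approach, you would have to actually establish the uniqueness of the fixed-point-free order-$p$ isometry on $T$ together with its extension to $L_{K3}$, which is a substantive additional argument, not a citation.
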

\proof
It is clear that two $[\rho]$-polarized $K3$ surfaces have $S(\sigma_1)\cong S(\rho)\cong S(\sigma_2)$.
In sections \S\,5,6,7,8 we proved that a $K3$ surface with a non-symplectic automorphism $\sigma$ of order $p=5,7,11,13,17,19$ and given invariant lattice $S(\sigma)=S_X$ belongs to an irreducible family.
A similar result holds for $p=2,3$ by \cite[\S 4]{N2} and \cite[\S 5]{AS}. This implies that two pairs with $S(\sigma_1)\cong S(\sigma_2)$ belong to the same irreducible component $\mathcal M^{\rho}$.
The last statement follows from Remark \ref{obs}.
\qed

\begin{rem}
In \cite{N1} V.V. Nikulin proved that the action of a symplectic automorphism on the $K3$ lattice (up to conjugacy) only depends on the number of its fixed points.
Proposition \ref{uni} gives a similar statement for non-symplectic automorphisms of prime order $p>2$.
\end{rem}

By Theorem \ref{moduli} the moduli space of $[\rho]$-polarized $K3$ surfaces is an irreducible quasi-projective variety.
 Moreover, as proved in \cite[Theorem 11.7]{DK}, any point $\ell\in D^{\rho}$ is the period point of some $[\rho']$-polarized $K3$ surface, where $\rho'$ has order $p$ and $[\rho']$ is not equal to $[\rho]$ if $\ell\in \Delta^{\rho}$.
Thus, the quotient $\Gamma^{\rho}\backslash D^{\rho}$ is an irreducible subvariety of the moduli space $\mathcal M_{K3}^p$ of $K3$ surfaces with a non-symplectic automorphism of order $p$.
We are interested in identifying the maximal irreducible subvarieties of this type in $\mathcal M_{K3}^p$, or equivalently, its irreducible components.

 \begin{thm}\label{moduli1}
The following table gives the  number $\#$ of irreducible components of the moduli space $\mathcal M_{K3}^p$,  their dimensions and the Picard lattice $S(\sigma)$ of the generic $K3$ surface in each component, for any prime $p$.
\begin{center}
\begin{table}[h]
\begin{tabular}{cccc}
$p$ & \# & $\dim$ & $S(\sigma)$\\
\hline
$2$ & $2$ & $19, 18$ & $(2),\ U(2)$\\
$3$ & $3$ &  $9,9,6$ & $ U,\ U(3),\ U(3)\oplus E_6^*(3)$\\
$5$& $2$ & $4,3$ & $H_5,\ H_5\oplus A_4^*(5)$\\
$7$ & $2$ & $2,2$ & $U\oplus K_7,\ U(7)\oplus K_7$\\ 
$11$ & $2$ & $1,1$ & $U,\ U(11) $\\
$13$ & $1$ & $0$ & $H_{13}\oplus E_8$\\
$17$ & $1$ & $0$ & $U\oplus L_{17}$\\
$19$& $1$ & $0$ & $U\oplus K_{19}$
\end{tabular}\\
\ \\
 \ \\\caption{Irreducible components of $\mathcal M_{K3}^p$}\label{mod}
\end{table}
\end{center}
\end{thm}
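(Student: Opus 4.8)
The plan is to realise $\mathcal M_{K3}^p$ as a finite union of irreducible subvarieties, one for each admissible invariant lattice, and then to single out the maximal ones. For a conjugacy class $[\rho]$ of an order $p$ isometry of $L_{K3}$ with hyperbolic invariant lattice write $\mathcal N^{\rho}=\Gamma^{\rho}\backslash D^{\rho}$; this is an irreducible subvariety of $\mathcal M_{K3}^p$, and since $\Delta^{\rho}$ is a divisor it is the closure of $\mathcal M^{\rho}$. By Theorem \ref{moduli} and \cite[Theorem 11.7]{DK} one has $\mathcal M_{K3}^p=\bigcup_{[\rho]}\mathcal N^{\rho}$. By Proposition \ref{uni} the classes $[\rho]$ correspond bijectively to the isometry classes of admissible invariant lattices $S=S(\rho)$ — for $p>2$ to the pairs $(r,a)$ permitted by Theorem \ref{RS} and condition $(*)$, for $p=2$ to the triples $(r,a,\delta)$ — and $\dim\mathcal N^{\rho}$ equals $(22-r)/(p-1)-1=m-1$ if $p>2$, and $20-r$ if $p=2$. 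As there are only finitely many such lattices, the irreducible components of $\mathcal M_{K3}^p$ are the maximal $\mathcal N^{\rho}$, equivalently the lattices $S$ that are minimal in the inclusion order described next.

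To describe this order, observe that $\mathcal N^{\rho_1}\subseteq\mathcal N^{\rho_2}$ precisely when the $O(L_{K3})$-orbit of the eigenspace $V^{\rho_1}$ is contained in that of $V^{\rho_2}$, i.e.\ when there is a primitive, $\Z[\zeta_p]$-linear isometric embedding of $(T(\rho_1),\rho_1)$ into $(T(\rho_2),\rho_2)$ extending to $L_{K3}$. Taking orthogonal complements, this amounts to a primitive embedding $S(\rho_2)\hookrightarrow S(\rho_1)$ whose complement $M$ admits a fixed point free isometry of order $p$ — hence is a free $\Z[\zeta_p]$-module, of rank $r(S(\rho_1))-r(S(\rho_2))\equiv 0\pmod{p-1}$ — together with a realisation of $T(\rho_1)$ as $M^{\perp}$ in $T(\rho_2)$. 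By Nikulin's theory of primitive embeddings this reduces to a compatibility of discriminant forms, which for a given pair can be decided by checking — through Theorem \ref{RS} and the bound $a\le m$ of Theorem \ref{ns} — whether a $\Z[\zeta_p]$-lattice $M$ and a complement $T(\rho_1)=M\oplus^{\mathrm{glue}}T(\rho_2)$ of the prescribed ranks and determinants exist. For $p=2$ this recovers the classical specialization graph of Nikulin \cite{N3,N4}.

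The enumeration is then carried out prime by prime, using the lists of admissible lattices from the preceding sections (Tables \ref{ord5}, \ref{ord7}, \ref{ord11} and the tables of Section 8). For $p\in\{5,7,11\}$ the admissible lattices of smallest rank are exactly the two with minimal $r$ appearing there: $H_5$ and $H_5\oplus A_4^*(5)$ for $p=5$, $U\oplus K_7$ and $U(7)\oplus K_7$ for $p=7$, $U$ and $U(11)$ for $p=11$. Every other admissible lattice $S$ contains one of these primitively with complement an orthogonal sum of copies of $A_{p-1}$, glued as needed — for instance $U\oplus E_8=(U\oplus K_7)\oplus^{\mathrm{glue}}A_6$ when $p=7$ — hence lies strictly below it; these downward inclusions are also transparent from the explicit families of the preceding sections, where imposing a singular point on a branch curve or a reducible fibre cuts out a subfamily. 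The two minimal lattices, on the other hand, are incomparable: the \emph{twisted} one cannot lie below the untwisted one, because the complement $M$ would have to be a $\Z[\zeta_p]$-lattice of prescribed rank and determinant embedding into $T(\rho)$, and none exists. For $p=5$ this is seen directly: $A_4^*(5)$ is rootless, and the orthogonal complement $N$ of a copy of $A_4^*(5)$ in $T(\rho_{H_5})$ would be a rank $16$, indefinite, $5$-elementary lattice of determinant $5^{2k-2}$, where $5^k$ is the index of $A_4^*(5)\oplus N$ in $T(\rho_{H_5})$; for no $k$ does such a lattice satisfy the conditions of Theorem \ref{RS} together with $a\le m$. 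The case $p=3$ runs identically and gives the three minimal lattices $U$, $U(3)$ and $U(3)\oplus E_6^*(3)$ ($E_6^*(3)$ being rootless), while for $p\ge 13$ there is a unique admissible lattice and hence one zero-dimensional component. Evaluating $\dim\mathcal N^{\rho}$ on the minimal lattices yields Table \ref{mod}.

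The delicate point is the inclusion criterion and, within it, the proof that the twisted families are genuinely maximal: this is where the equivariant primitive-embedding machinery meets a finite but somewhat intricate analysis of which $p$-elementary lattices can occur, the decisive input being the non-existence statements contained in Theorem \ref{RS} and the inequality $a\le m$. The rest — producing the list of admissible lattices, computing dimensions and checking that the non-minimal lattices are dominated — is routine once the classification of the preceding sections is in hand.
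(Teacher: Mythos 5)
Your global strategy --- decompose $\mathcal M_{K3}^p$ into the subvarieties $\Gamma^{\rho}\backslash D^{\rho}$ indexed by the admissible invariant lattices and detect inclusions via the eigenspace criterion $V^{\rho_1}\subset V^{\rho_2}$, i.e.\ $T(\rho_1)\subset T(\rho_2)$ with $\rho_2=\rho_1$ on $T(\rho_1)$ --- is exactly the paper's starting point, and your treatment of $p=7,11$ (the two minimal lattices have equal rank, so equal-dimensional families cannot be nested) and of $p\geq 13$ (a single admissible lattice) is fine. The genuine gap is the decisive step for $p=5$ (and its analogue for $p=3$): the claim that the $3$-dimensional family with $S(\sigma)\cong H_5\oplus A_4^*(5)$ does not lie in the closure of the $4$-dimensional family with $S(\sigma)\cong H_5$. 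You argue that the orthogonal complement $N$ of a copy of $A_4^*(5)$ in $T(\rho_{H_5})=H_5\oplus U\oplus E_8^{\oplus 2}$ would be a rank $16$, $5$-elementary lattice of determinant $5^{2k-2}$, and that no such lattice is compatible with Theorem \ref{RS} and $a\leq m$. This is false: for $k=3$ such a lattice exists, namely precisely the transcendental lattice of the twisted family, $H_5\oplus U(5)\oplus E_8\oplus A_4$, which has $|\det|=5^4$, signature $(2,14)$ and $a=4=\rank/(p-1)$; the existence restrictions of Theorem \ref{RS} you invoke apply to \emph{hyperbolic} $p$-elementary lattices and say nothing about this one, and $a\leq m$ holds with equality. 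So the numerical obstruction you rely on evaporates, and whether the required embedding (together with the equivariance of the order-$5$ isometries and the extension to $L_{K3}$ that your own criterion demands) exists is a strictly finer question that you have not settled.

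The paper closes exactly this point by a geometric, not lattice-theoretic, argument: if the generic member $X$ of the twisted family were also polarized by the class of $\rho_{H_5}$, Theorem \ref{moduli} would produce a second order-$5$ automorphism $\sigma_2$ with $S(\sigma_2)\cong H_5$; both automorphisms fix the degree-two class $h$, hence induce projectivities of $\PP^2$ preserving the smooth branch sextic, whose automorphism group is finite, so $\sigma_1^4\circ\sigma_2$ is a symplectic automorphism of finite order, forcing $\rank S_X>8$ by \cite{N1} and contradicting $\rank S_X=6$. You need this argument (or a correct replacement, e.g.\ a genuine equivariant discriminant-form analysis) for $p=5$; likewise $p=3$ cannot be dismissed as ``running identically'', since $U(3)\oplus E_6^*(3)$ has rank $8$ against rank $2$ for $U$ and $U(3)$, so the equal-dimension shortcut is unavailable --- the paper cites the separate proof in \cite[Theorem 5.6]{AS}. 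Finally, the paper obtains the domination of all non-minimal lattices from the explicit irreducible families of Sections 5--8 (degenerations of the branch curve or of the elliptic fibration), whereas each of your ``glued as needed'' embeddings would still require checking the equivariant gluing; that part is repairable routine work, but the maximality claims for $p=3,5$ are a real gap as written.
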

\vspace{-0.9cm} 

\proof 
 Observe that the moduli space $\mathcal M^{\rho}$ is in the closure of $\mathcal M^{\rho_2}$ if and only if there is $\rho_1\in[\rho]$ such that $D^{\rho_1}\subset D^{\rho_2}$, i.e. $V^{\rho_1} \subset V^{\rho_2}$.
This is equivalent to say that $T(\rho_1)\subset T(\rho_2)$ and $\rho_2 =\rho_1$ on $T(\rho_1)$.

If $p=2$, then $\rho_i=-id$ on $T(\rho_i)$, hence $\mathcal M^{\rho}\subset \mathcal M^{\rho_2}$ if and only if $T(\rho_1)\subset T(\rho_2)$, or equivalently $S(\rho_1)\supset S(\rho_2)$.  As a consequence of Theorem \ref{o2}, the  invariant lattice $S(\sigma)$ of a non-symplectic involution contains a primitive sublattice which is isometric to either $(2)$ or $U(2)$.
This implies, since $(2)$ clearly is not a sublattice of $U(2)$, that $\mathcal M_{K3}^2$ has two irreducible components whose generic elements are K3 surfaces with $S_X$ isomorphic to $(2)$ and  $U(2)$  respectively.  

The case $p=3$ is  \cite[Theorem 5.6]{AS}.
In Theorem \ref{o5} we proved that a $K3$ surface with an order $5$ non-symplectic automorphism either belongs to the family in Example \ref{fam1} or to the one in Example \ref{fam2}. These two families are irreducible and of dimensions  $4$ and  $3$ respectively.
Thus we only need to prove that the second component is not contained in the first one.
Assume that the generic pair $(X,\sigma_1)$ in the first family (we assume $\sigma^*_1 (\omega_X)=\zeta_5\omega_X$), with  $S_X=S(\sigma_1)\cong H_5\oplus A_4^*(5)$, also belongs to the second family. Since the orthogonal complement of $H_5$ in $S_X$ contains no ($-2$)-curves, then by Theorem \ref{moduli}, $X$ carries  an automorphism $\sigma_2$ of order $5$ such that $\sigma^*_2 (\omega_X)=\zeta_5\omega_X$ and $S(\sigma_2)\cong H_5$.
If $h\in S(\sigma_1)$, $h^2=2$ is as in \S 5, then the associated morphism  is a double cover of $\Ps 2$ branched along a smooth sextic curve $C$ (since $A_4^*(5)$ contains no ($-2$)-curves). Since $h$ is fixed by  $\sigma_i^*$, $i=1,2$, then $\sigma_i$ induces a projectivity $\bar \sigma_i$ of $\Ps 2$ which preserves $C$. The automorphism group of $C$ is finite, thus $\bar \sigma_1^4\circ \bar\sigma_2$ has finite order and  $ \sigma_1^4\circ \sigma_2$ is a symplectic automorphism of finite order of $X$.
By \cite{N1} this would imply that the Picard lattice of $X$ has rank $>8$, giving a contradiction.

If $p=7$, then in Theorem \ref{o7} we proved that a $K3$ surface with an order $7$ non-symplectic automorphism either belongs to the family in Example \ref{7fam1} or to the one in Example \ref{7fam2}.
Both are clearly irreducible of dimension $2$, thus they are the irreducible components of $\mathcal M_{K3}^7$.

If $p=11$, then in Theorem \ref{o11} we proved that a $K3$ surface with an order $11$ non-symplectic automorphism either belongs to the family in Example \ref{11fam1} or to the one in Example \ref{11fam2}.
Both are clearly irreducible and $1$-dimensional, thus they are the irreducible components of $\mathcal M_{K3}^{11}$.

If $p=13, 17$ or $19$ then by Theorem \ref{o131719} the moduli space $\mathcal M_{K3}^p$ is irreducible and $0$-dimensional.
\qed

 \begin{rem}
The general members of the two irreducible components of $\mathcal M_{K3}^2$ are double covers of the plane branched along a smooth sextic curve (if $S_X\cong (2)$) and double covers of a quadric branched along a smooth curve of bidegree $(4,4)$ (if $S_X\cong U(2)$).
Projective models for the general members of  $\mathcal M_{K3}^3$ are described in \cite{AS} and \cite{Taki}.
\end{rem}

\bibliographystyle{plain}

\vskip 25pt

\noindent
{\small Michela Artebani, Departamento de Matem\'atica, Universidad de Concepci\'on, Casilla 160-C, Concepci\'on, Chile. e-mail: {\tt martebani@udec.cl}}.

\vskip 10pt

\noindent 
{\small Alessandra Sarti, Universit\'e de Poitiers, 
Laboratoire de Math\'ematiques et Applications, 
 T\'el\'eport 2 
Boulevard Marie et Pierre Curie
 BP 30179,
86962 Futuroscope Chasseneuil Cedex,
France. e-mail {\tt sarti@math.univ-poitiers.fr},\\ URL {\tt http://www.mathematik.uni-mainz.de/$\sim$sarti}}.

\vskip 10pt

\noindent 
{\small Shingo Taki, Graduate School of Mathematics, 
 Nagoya University, Chikusa-ku Nagoya 464-8602 Japan. e-mail {\tt m04022x@math.nagoya-u.ac.jp}}.

\newpage

\setcounter{section}{1}
\setcounter{thm}{0}
\begin{center}
{\bf Appendix: On Naruki's $K3$ surface}\\
\vspace{5mm}
{\rm{\small Shigeyuki Kond{$\bar{\rm o}$}}}\footnote{Research of the author is partially supported by
Grant-in-Aid for Scientific Research A-18204001 and Houga-20654001, Japan}
\end{center}

\vskip 10pt

As a moduli space of some $K3$ surfaces with a non-symplectic automorphism of order 5,
the quintic del Pezzo surface appears (\cite{K2}, see the following Remark \ref{re} for more details).
In this appendix we shall give a similar example in case of $K3$ surfaces with a non-symplectic 
automorphism of order $7$.

\subsection{Naruki's $K3$ surface}

Let $\zeta = e^{2\pi \sqrt{-1}/7}$.  We introduce a hermitian form of signature $(1,2)$ with variables $z=(z_1,z_2,z_3)$ by setting
$$H(z) = (\zeta + {\bar \zeta})z_1{\bar z}_1- z_2{\bar z}_2 - z_3{\bar z}_3.$$
We denote by $SU(1,2)$ the group of $(3,3)$-matrices of determinant 1 which are unitary with respect to $H$. The group
$SU(1,2)$ naturally acts on the complex ball of dimension 2
$$D= \{ (z_1,z_2,z_3) \in \bbP^2 : H(z) > 0\}.$$
We denote by $\Gamma$ the subgroup of $SU(1,2)$ consisting of elements whose entries are integers in $\bbQ (\zeta)$.
It is known that $\Gamma$ acts on $D$ properly discontinuously and the quotient $D/\Gamma$ is compact.  
We further denote by $\Gamma'$ the subgroup of $\Gamma$ consisting of matrices which are congruent to the identity matrix
modulo the principal ideal $P$ generated by $1-\zeta$.  Naruki \cite{N} showed that the quotient
$D/\Gamma'$ is isomorphic to a $K3$ surface $X$.

\subsection{$K3$ surfaces with a non-symplectic automorphism of order 7}
In the following we shall show that the Naruki's $K3$ surface $X$ is the moduli space of pairs of  a $K3$ surface and a non-symplectic 
automorphism of order 7.

Let $S = U(7) \oplus K_7$ and its orthogonal complement $T= U\oplus U(7)\oplus E_8\oplus A_6$ in $L_{K3}$ (see Table \ref{ord7}).

\begin{rem-def} \label{isometry}
By \cite[Theorem 2.1]{RS1} it follows that $T$ is isomorphic to $T'=U\oplus U\oplus K_7\oplus A_6^2$. An order $7$ isometry without non-zero fixed vectors on $T$ can be thus explicitely described as follows.

Let $U_1, U_2$ be two copies of the hyperbolic plane $U$ and
let $e_i, f_i$ be a basis of $U_i$, $(i=1,2)$ satisfying $e_i^2 =
f_i^2 = 0,\ (e_i, f_i) = 1$.
Let $x, y$ be a basis of $K_7$
satisfying $x^2 = -2,\ y^2 = -4,\ (x,y) = 1$.
Let $\rho_0$ be the isometry of $U_1\oplus U_2 \oplus
K_7$
defined by
$$\begin{array}{ll}
\rho_0(e_1) = e_1+f_1+e_2+f_2-y &
\rho_0(f_1) = 2e_1+e_2+2f_2-y\\
\rho_0(e_2) = -f_1+e_2+f_2+x &
\rho_0(f_2) = -f_1+e_2\\
\rho_0(x) = e_1+2f_1-e_2+f_2-x-y, &
\rho_0(y)= 3e_1-f_1+4e_2+3f_2+x-2y.
\end{array}$$
It is easy to see that $\rho_0$ has order $7$ and acts trivially on the
discriminant group of $U_1\oplus U_2 \oplus K_7$.

\noindent  An easy calculation shows that
$$v = (-1+\zeta^2+\zeta^4-\zeta^5)e_1+
(\zeta^3-1)f_1+(\zeta-\zeta^5)e_2+
(\zeta^2-\zeta^5)f_2+x+(1+\zeta^5) y$$
is an eigenvector of $\rho_0$ with the eigenvalue $\zeta$
and
$$(v, \bar{v}) = 7
(\zeta + \zeta^6).$$
On the other hand, let $r_1, \dots, r_6$ be a basis of
$A_6$ such that $r_i^2 = -2,\ (r_i, r_{i+1})= 1$
and the other $r_i$'s and $r_j$'s are orthogonal.
Consider the isometry of $A_6$ defined by
$$ \rho_6(r_i)=r_{i+1}, \ (1 \leq i \leq 5),
\quad \rho_6(r_6)
= -(r_1+r_2+r_3+r_4+r_5+r_6).$$
It is easy to see that $\rho_6$ acts trivially on the
discriminant group of $A_6$ and that 
$$w = r_1+(\zeta^6+1)r_2+(1+\zeta^5+\zeta^6)
r_3-
(\zeta+\zeta^2+\zeta^3)r_4-(\zeta+
\zeta^2)r_5-\zeta r_6$$
is an eigenvector of $\rho_6$ with the eigenvalue $\zeta$
and
$$(w, \bar{w}) = -7.$$
Combining $\rho_0$ and $\rho_6$, we define an isometry
$\rho$ of $T$ of order 7 and without nonzero fixed
vectors.
Moreover the action of $\rho$ on the discriminant group $T^*/T$
is trivial.
\end{rem-def}

In Definition-Remark \ref{isometry} we explicitely described an order $7$ isometry $\rho$ on $T$ without nonzero fixed vectors and acting trivially on the discriminant group.
  Hence $\rho$ can be extended to an isometry $\rho$ (we use the same
symbol) of $L_{K3}$ acting trivially on $S$.

Now we consider a $K3$ surface $Y$ with $S_Y \cong S$.  Then the transcendental lattice $T_Y$ of $Y$ is isomorphic to $T$.
We identify $L_{K3}$ and $H^{2}(Y, {\bbZ})$ so that $S = S_Y$ and $T=T_Y$.
If the period $\omega_Y \in T\otimes \bbC$ is an eigenvector of $\rho$, then it follows from the Torelli type theorem for $K3$
surfaces that $\rho$ can be realized as an automorphism $g$ of $Y$ of order 7:  $g^*=\rho$.
Now consider the eigenspace decomposition of $\rho$:
$$T\otimes \bbC = \oplus_{i=1}^6 V_{\zeta^i}$$
where $V_{\zeta^i}$ is the eigenspace corresponding to the eigenvalue $\zeta^i$.
The period domain of the pair $(Y, g)$ is given by
$$D' = \{ \omega \in \bbP(V_{\zeta}) : \langle \omega, {\bar \omega} \rangle > 0\}.$$
Then the above calculations show that the hermitian form on $V_{\zeta}$ defined by $\langle \omega, \bar{\omega}\rangle$ is given by
$7H(\xi)$.
We define an arithmetic subgroup $\tilde{\Gamma}$ by
$$\tilde{\Gamma} = \{ \varphi \in \operatorname{O}(T) : \varphi \circ \rho = \rho \circ \varphi \}$$
and a subgroup $\tilde{\Gamma'}$ by
$$\tilde{\Gamma' }= \tilde{\Gamma} \cap \Ker \{ \operatorname{O}(T) \to \operatorname{O}(q_T) \}.$$
 Let $\Delta = \bigcup \delta^{\perp}\cap D'$ where $\delta$ moves over all $(-2)$-vectors in $T$.  
Then $(D'\setminus \Delta) /\tilde{\Gamma'}$ is the moduli of the pair $(Y,g)$.
Note that $\rho$ has discriminant $-1$ and is contained in $\tilde{\Gamma'}$.  Moreover $\rho$ acts trivially on $B$.
By using the same method as in \cite{K1}, we have

\subsection{Theorem}
$X\cong D/\Gamma'\cong D'/\tilde{\Gamma'}$.

\subsection{Remark}
Naruki \cite{N} showed that $X$ has an elliptic fibration with three singular fibers of type $I_7$ in the sense of Kodaira and with
7 sections.  Thus $X$ contains 28 smooth rational curves.  In particular $X$ has the Picard number 20.
We can see that $\Delta/\Gamma'$ consists of $28$ curves corresponding to 28 smooth rational curves on $X$.
We omit the proof of this fact here.

\subsection{Remark}\label{re}
Let $Z$ be a $K3$ surface with Picard lattice 
 $$S_Z\cong  \begin{pmatrix}2&1\\1&-2\end{pmatrix}\oplus A_4\oplus A_4$$ 
and with a
non-symplectic automorphism $\sigma$ of order 5 acting trivially on $S_Z$.  The author \cite{K2} showed that the moduli space of ordered
5-points on $\bbP^1$ is isomorphic to the moduli space of the pairs
of such $(Z, \sigma)$.  Moreover these moduli spaces can be written birationally as an arithmetic quotient of a 2-dimensional complex ball  which is isomorphic to the quintic del Pezzo surface.

\smallskip

\vskip 30pt

\noindent
{\small Shigeyuki Kond{$\bar{\rm o}$, Graduate School of Mathematics, Nagoya University, Nagoya,
464-8602, Japan. e-mail {\tt kondo@math.nagoya-u.ac.jp}}

\end{document}